\documentclass[]{amsart}

%%%%%%%%%%%%%%%%%%%%%%%%%%%%%%%%%%%%%%%%%%%%%%%%%%%%%%%%%%%%%%%%%%%%%%%%%%%%%%%

\def\eqalignno#1{\displ@y \tabskip\@centering
  \halign to\displaywidth{\hfil$\@lign\displaystyle{##}$\tabskip\z@skip
    &$\@lign\displaystyle{{}##}$\hfil\tabskip\@centering
    &\llap{$\@lign##$}\tabskip\z@skip\crcr
    #1\crcr}}
\def\leqalignno#1{\displ@y \tabskip\@centering
  \halign to\displaywidth{\hfil$\@lign\displaystyle{##}$\tabskip\z@skip
    &$\@lign\displaystyle{{}##}$\hfil\tabskip\@centering
    &\kern-\displaywidth\rlap{$\@lign##$}\tabskip\displaywidth\crcr
    #1\crcr}}

%%%%%%%%%%%%%%%%%%%%%%%%%%%%%%%%%%%%%%%%%%%%%%%%%%%%%%%%%%%%%%%%%%%%%%%%%%%%%%%

\newenvironment{thmenum}{

\begin{enumerate}}
{\end{enumerate}}

\usepackage[dvips]{graphicx}
\usepackage{epsfig}
\usepackage{latexsym}
\usepackage{amsfonts}
\usepackage{amsthm}
\usepackage{amsmath}

%%%%%%%%%%%%%%%%%%%%%%%%%%%%

\renewenvironment{proof}[1][]{\vskip-\lastskip\par\vskip6pt plus2pt
minus0pt\par%
\noindent\textit{Proof.}\enspace\ignorespaces}{\hfill$\Box$\par\vskip6pt
plus2pt minus0pt}

\numberwithin{equation}{section}

\newtheorem{theorem}[equation]{Theorem}

\newtheorem{lemma}[equation]{Lemma}

\newtheorem{proposition}[equation]{Proposition}

\newtheorem{corollary}[equation]{Corollary}

\theoremstyle{definition}
\newtheorem{definition}[equation]{Definition}

\newtheorem{remark}[equation]{Remark}

\newtheorem{notation}[equation]{Notation}

\newtheorem{conventions}[equation]{Conventions}

\newtheorem{convention}[equation]{Convention}

%%%%%%%%%%%%%%%%%%%%%%%%%%%

%\usepackage{showlabels}

\begin{document}
\author{Petra Hitzelberger, Linus Kramer and Richard M. Weiss}
\thanks{{\it Address of the first two authors:}\
Fachbereich Mathematik und Informatik, Universit\"at M\"unster,
Einsteinstrasse~62, 48149 M\"unster, Germany} 
\thanks{{\it Address of the third author:}\
Department of Mathematics, Tufts University, 
503~Boston Avenue, Medford, MA 02155, USA}
\title{Non-discrete Euclidean Buildings for the Ree and Suzuki groups}
%\date{\today}
\begin{abstract}
We call a non-discrete Euclidean building a {\it Bruhat-Tits space} if
its automorphism group contains a subgroup that induces
the subgroup generated by all the root groups of a root datum of the
building at infinity. This is the class of non-discrete
Euclidean buildings introduced and studied by Bruhat and Tits
in \cite{bruh-tits}. We give the complete
classification of Bruhat-Tits spaces whose building at infinity is
the fixed point set of a polarity of 
an ambient building of type ${\sf B}_2$, ${\sf F}_4$ or ${\sf G}_2$
associated with a Ree or Suzuki group endowed with the usual root datum.
(In the ${\sf B}_2$ and ${\sf G}_2$ cases, this fixed point set is a building
of rank one; in the ${\sf F}_4$ case, it is a generalized
octagon whose Weyl group is not crystallographic.)
We also show that each of these Bruhat-Tits spaces has a 
natural embedding in the unique Bruhat-Tits space
whose building at infinity is the corresponding ambient building.
\end{abstract}

\maketitle

\pagestyle{myheadings}
\markboth{\uppercase{Petra Hitzelberger, Linus Kramer and Richard Weiss}}
{\uppercase{Non-discrete Euclidean Buildings}}

\section{Introduction}
Suppose that $X$ is an irreducible affine building.
Typically (and for certain if the dimension of $X$ is at least three), the automorphism group of 
the building at infinity of $X$ contains subgroups constituting
a {\it root datum} defined over a field $K$. In this case
the affine building $X$ is uniquely determined by
a valuation of this root datum which is, 
in turn, uniquely determined by a discrete valuation of $K$. 

The notion of a valuation of the root datum of a spherical building makes
perfectly good sense if we drop the requirement that the 
values lie in a discrete subgroup of ${\mathbb R}$.
In the non-discrete case, there is no longer a corresponding
affine building $X$. There does exist, 
however, an analogous structure called a {\it non-discrete Euclidean building}.

Non-discrete Euclidean buildings were
first introduced and studied by Bruhat and Tits in \cite{bruh-tits}.
These structures were first axiomatized and, in dimension greater than two,
classified by Tits in \cite{tits-como}.
(Other fundamental references about non-discrete Euclidean 
buildings are \cite{kleiner} and \cite{parreau}; see also \cite{berenstein} and \cite{rousseau}.)

Non-discrete Euclidean buildings are sometimes called 
{\it affine ${\mathbb R}$-buildings} (since
in the simplest case they are ${\mathbb R}$-trees) or {\it apartment systems} (since
they have apartments but are not really buildings) 
or, as in \cite{kleiner}, simply {\it Euclidean buildings}, although
this term is more commonly synonymous with ``affine building.'' 

In \ref{abc21} below, we propose the term {\it Bruhat-Tits space} for the class of
non-discrete Euclidean buildings that were introduced and studied by Bruhat and Tits in 
\cite{bruh-tits}. (The term {\it Bruhat-Tits space} was used in 
\cite{lang} to describe complete metric spaces that satisfy a certain 
``semi-parallelogram rule'' introduced by S. Lang.
This is a more general class of metric spaces which
includes not only all non-discrete Euclidean buildings but also, for example, all
simply connected Riemannian manifolds of non-positive curvature.
We mention too that Bruhat-Tits spaces in our sense are not necessarily
complete as metric spaces; see Section~7.5 in \cite{bruh-tits}.)

Suppose now that $\Delta$ is the spherical building (of rank one or two) associated with
a Ree or Suzuki group. Thus $\Delta$ is the fixed point set of a ``polarity''
of a building of type ${\sf B}_2$, ${\sf F}_4$ or ${\sf G}_2$ which is 
defined over a field extension $K/F$, where $p:={\rm char}(K)$ equals 2 in the first two
cases and 3 in the third case,
and the polarity is defined in terms of a Tits endomorphism $\theta$ of $K$ (as defined in
\ref{abc33}) 
whose image is $F$. On page~173 of \cite{tits-como}, Tits remarks that an arbitrary 
valuation $\nu$ of $K$ (where ``arbitrary'' means ``arbitrary non-trivial 
real-valued non-archimedean'')
extends to a valuation of the root datum of $\Delta$---and 
thus there exists a corresponding Bruhat-Tits space whose building
at infinity is $\Delta$---if and only if the valuation $\nu$ is $\theta$-invariant.
It is the goal of this paper to make this statement more precise and
to fill in all the details justifying it.

Here {\it $\theta$-invariant} means that $\nu\circ\theta$ is {\it equivalent}
to $\nu$, i.e.~that $\nu(x)\ge0$ for $x\in K^*$ if and only if $\nu(x^\theta)\ge0$.
This is the same as saying that $\nu(x^\theta)=\gamma\cdot\nu(x)$ for some
positive real number $\gamma$ and all $x\in K^*$. Since
$\theta$ is a Tits endomorphism of $K$, 
it follows that $\nu$ is $\theta$-invariant if and only if 
$$\nu(x^\theta)=\sqrt{p}\cdot\nu(x)$$ 
for all $x\in K^*$. Thus, in particular, $\nu$ cannot be $\theta$-invariant
if it is discrete (since the ratio of two values of a discrete valuation
is always rational).

\section{Overview}
Continuing to put precision aside for a moment, we can summarize
the main results of this paper roughly as follows; see also \ref{abc99}.

\begin{theorem}\label{abc300}
Let $G$ be a Ree or Suzuki group. Then the following hold:
\begin{thmenum}
\item There exists
a Moufang building $\dot{\Delta}$ of type ${\sf B}_2$, ${\sf F}_4$ or ${\sf G}_2$
having a polarity $\rho$ defined over a pair $(K,\theta)$ as described in 
Section~1 such that $G$ is the group induced by the centralizer
$C_{\dot{G}}(\rho)$ on the set $\dot{\Delta}^\rho$
of fixed points of $\rho$, where $\dot{G}$ is 
the subgroup of ${\rm Aut}(\dot{\Delta})$ generated by the root groups of $\dot{\Delta}$.
\item The set $\Delta:=\dot{\Delta}^\rho$ has the structure
of a Moufang building of rank one in cases ${\sf B}_2$ and 
${\sf G}_2$, of rank two whose Weyl group is dihedral of order 16 in 
case ${\sf F}_4$. 
\item For each valuation $\nu$ of the field $K$,
there exists a unique non-discrete Euclidean building $(\dot{X},\dot{\mathcal A})$
determined by $\nu$
whose building at infinity is $\dot{\Delta}$ and whose automorphism group
induces $\dot{G}$ on $\dot{\Delta}$.
\item Let $\nu$ and $(\dot{X},\dot{\mathcal A})$ be as in (iii). Then 
there exists an automorphism $\dot{\rho}$ of $(\dot{X},\dot{\mathcal A})$ 
inducing the polarity $\rho$ on $\dot{\Delta}$ if and only if $\nu$ is $\theta$-invariant.
Furthermore, $\dot{\rho}$, if it exists, is unique.
\item If $\nu$, $(\dot{X},\dot{\mathcal A})$ and $\dot{\rho}$ are as in (iv),
then there is a canonical non-discrete Euclidean building $(X,{\mathcal A})$ contained 
in the fixed point set of $\dot{\rho}$ in $(\dot{X},\dot{\mathcal A})$ whose
building at infinity is $\Delta$ and whose automorphism group contains
a subgroup inducing $G$ on $\Delta$.
\item Every non-discrete Euclidean building $(X,{\mathcal A})$ whose building at 
infinity is $\Delta$ and whose automorphism group contains a subgroup
inducing $G$ on $\Delta$ arises from a $\theta$-invariant valuation 
$\nu$ of $K$ as described in (v).
\end{thmenum}
\end{theorem}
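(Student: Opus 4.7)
The plan is to address the six clauses in order, since each builds on the previous. For clause (i), I would invoke the classical Tits description: take $\dot\Delta$ to be the Moufang polygon (of type ${\sf B}_2$, ${\sf F}_4$ or ${\sf G}_2$) whose root groups are coordinatized by the field $K$ and its subfield $F = K^\theta$, and realize $\rho$ as the explicit order-two permutation of simple root groups that swaps long and short roots while twisting the coordinates by $\theta$. That this centralizes a subgroup inducing $G$ on $\dot\Delta^\rho$ is then the content of the Ree/Suzuki construction, and verifying it reduces to the standard commutator relations in the root group parametrization. Clause (ii) follows from the same coordinate calculation: one checks that the fixed root groups give a Moufang set structure in the ${\sf B}_2$ and ${\sf G}_2$ cases, and that in the ${\sf F}_4$ case the fixed apartment is an octagon with dihedral Weyl group of order $16$, recovering the Ree--Tits generalized octagon.

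Clause (iii) is the existence and uniqueness statement of Bruhat--Tits in the non-discrete setting: a non-trivial real-valued valuation $\nu$ of $K$ canonically extends to a valuation of the root datum of $\dot\Delta$ (via $\nu$ on the long root groups and, where appropriate, a rescaled version on the short root groups), and the resulting valued root datum determines $(\dot X,\dot{\mathcal A})$ uniquely by the results of \cite{bruh-tits} and \cite{tits-como}. For clause (iv), which I expect to be the main obstacle, the key observation is that $\rho$ permutes the root groups of $\dot\Delta$ by swapping long and short roots, and the induced isomorphism between them is given by $\theta$ (or a power of $\theta$). An extension $\dot\rho$ exists precisely when the valuation of the root datum is preserved up to the relative scaling required to make long and short apartments fit inside a single affine apartment of $\dot X$. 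A direct calculation in the coordinatized root groups shows that this rescaling matches the requirement $\nu(x^\theta) = \sqrt p \cdot \nu(x)$, which is $\theta$-invariance. Uniqueness of $\dot\rho$ follows because any two lifts differ by an automorphism of $(\dot X,\dot{\mathcal A})$ centralizing $\dot G$ and acting trivially at infinity, forcing the difference to be the identity.

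For clause (v), I would define $X$ to be the (convex closure of the) fixed point set $\dot X^{\dot\rho}$, with apartment system $\mathcal A$ given by the fixed apartments of $\dot{\mathcal A}$. Because $\dot\rho$ is an isometry, $X$ is a closed convex subset; because $\dot\rho$ stabilizes each apartment it fixes pointwise (by uniqueness of the extension), the fixed apartments are honest Euclidean spaces of the expected dimension (one for ${\sf B}_2$ and ${\sf G}_2$, two for ${\sf F}_4$). Checking the axioms of a non-discrete Euclidean building then reduces to checking angle and retraction properties in each fixed apartment, and the building at infinity is $\Delta$ by construction. The induced action of $C_{\dot G}(\rho)$ on $X$ gives the subgroup required to induce $G$ on $\Delta$.

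Finally, for clause (vi), given any non-discrete Euclidean building $(X,\mathcal A)$ whose building at infinity is $\Delta$ and whose automorphism group induces $G$ on $\Delta$, I would recover a $\theta$-invariant valuation $\nu$ of $K$ from the action of the root groups of $\Delta$ on $X$, using the standard filtration argument: each root group acts on $X$ with a filtration by fixators of half-apartments, and reading off the indices of this filtration in the coordinatization of the root group produces $\nu$. The fact that the root groups of $\Delta$ arise as $\rho$-invariants of the root groups of $\dot\Delta$ forces $\nu$ to satisfy $\nu(x^\theta) = \sqrt p\cdot\nu(x)$. Applying clauses (iii)--(v) to this $\nu$ produces a Bruhat-Tits space containing $X$, and uniqueness in (iii) together with the characterization of $X$ as the $\dot\rho$-fixed Bruhat-Tits space in $\dot X$ identifies the two. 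I expect the delicate point throughout to be the compatibility of the $\sqrt p$ scaling in clause (iv) with the choice of metric on the fixed apartment, since this is where the non-crystallographic Weyl group in the ${\sf F}_4$ case forces us to leave the discrete setting altogether.
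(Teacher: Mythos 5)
Your proposal tracks the paper's own proof faithfully: Theorem~\ref{abc300} is an overview result whose proof in the paper is simply a roadmap citing~\ref{abc6}, \ref{abc29}, \ref{abc49}, \ref{abc31}, \ref{abc80}, \ref{abc90} and~\ref{abc93}, and your sketch correctly identifies the content of each of those ingredients, in the right order and with the right dependencies. The one place where you deviate slightly is in clause~(v): you propose to take $X$ to be ``the convex closure of the fixed point set $\dot X^{\dot\rho}$,'' whereas the paper (\ref{abc92}) defines $X$ as the union $\bigcup_{\dot f\in\dot{\mathcal A}_\rho}\dot f(\mathbb A)$, where $\dot{\mathcal A}_\rho$ is the set of charts whose image apartment is $\dot\rho$-invariant and on which $\dot\rho$ fixes $\dot f(\mathbb A)$ pointwise, and then explicitly notes only that this $X$ is \emph{contained} in $\dot X^{\dot\rho}$, not that it equals its convex closure. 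That distinction matters for the downstream verification of (A1)--(A6) in~\ref{abc76}, which uses the apartment-by-apartment description (via~\ref{abc95} and~\ref{abc95x}) to control how charts overlap; defining $X$ as a convex hull would not immediately hand you the chart system $\mathcal A$ or the fact that $X\cap\dot A_1=\dot f_1(\mathbb A)$ for each $\dot\rho$-compatible apartment. Likewise, your uniqueness argument for $\dot\rho$ in clause~(iv) (two lifts differ by an automorphism trivial at infinity and centralizing $\dot G$, hence trivial) needs a further word to rule out translations, but the paper is equally terse here and simply points to the rigidity built into~\ref{abc31}. These are refinements of presentation, not gaps in the underlying strategy.
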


\begin{proof}
Assertion (i) is essentially the definition of a Ree or Suzuki group.
The building $\dot{\Delta}$ and its polarity $\rho$ are described 
in \ref{abc6}. Assertion (ii) is proved in \ref{abc29}.iii and \ref{abc49}.
Assertions (iii) and (iv) follow from  \ref{abc31} and \ref{abc80}.
Assertion (v) is proved in \ref{abc93} and assertion (vi)
is a consequence of \ref{abc90}.
\end{proof}

This paper is organized as follows:
In Sections~3--4 we review the basic results about root data, valuations of root
data and non-discrete Euclidean buildings we require. In Sections~5 we introduce
the spherical buildings of type ${\sf B}_2$, ${\sf F}_4$ and ${\sf G}_2$
having polarities that give rise to the Ree and Suzuki groups and in Section~6 
we assemble the basic properties of the subbuildings fixed by these polarities
we require.
Our main results---\ref{abc90}, \ref{abc91} and \ref{abc93}---are then proved 
in Sections~7 and~8. 

Throughout this paper we will use $\Delta$
and similar letters to denote spherical buildings and $(X,{\mathcal A})$
and similar letters to denote Euclidean buildings.

\section{Root data and valuations}
We now start paying attention to the details. 
In this section we review the notions of a root datum of a spherical building
and a valuation of a root datum.

\begin{notation}\label{abc0}
Let $(W,S)$ be an irreducible spherical Coxeter system. Then either $W$
can be identified with the Weyl group of an irreducible root system $\Phi$
so that $S$ consists of the reflections determined by the elements in a basis,
or $|S|=2$ and $W$ is a dihedral group of
order $2n$ for $n=5$ or $n>6$. In the latter case we let 
$\Phi$ consist of $2n$ vectors evenly distributed around
the unit circle in a 2-dimensional Euclidean space and think of $S$
as the two reflections determined by two of these vectors making an angle
of $(n-1)180/n$ degrees. (Later we will refer to this set $\Phi$
as ${\sf I}_2(n)$.)
In both cases, we denote by ${\mathbb A}$ the ambient Euclidean space of $\Phi$
and by ${\rm Aut}(\Phi)$ the group of isometries of ${\mathbb A}$
mapping $\Phi$ to itself. Note that $(W,S)$ is uniquely determined by 
$\Phi$. When we sometimes call $W$ the {\it Weyl group} of 
$\Phi$ (as is usual), we really have the pair $(W,S)$ in mind.
\end{notation}

\begin{notation}\label{abc16}
Let $(W,S)$ and $\Phi$ be as in \ref{abc0}.
For all $\alpha,\beta\in\Phi$ such that $\alpha\ne\pm\beta$, the
{\it interval} $(\alpha,\beta)$ is the $s$-tuple
$(\gamma_1,\gamma_2,\ldots,\gamma_s)$ of all elements $\gamma_i\in\Phi$ such
that for some positive real numbers $p_i$ and $q_i$ (which depend
on $\alpha$ and $\beta$),
\begin{equation}\label{abc16b}
\gamma_i/|\gamma_i|=p_i\alpha/|\alpha|+q_i\beta/|\beta|,
\end{equation}
where $\angle(\alpha,\gamma_i)<\angle(\alpha,\gamma_j)$ if and only if $i<j$.
Note that $s$ depends on the pair $\alpha$ and $\beta$ and that for some pairs, $s=0$.
\end{notation}

To define the interval $(\alpha,\beta)$, we could
have omitted the denominators in \ref{abc16b}. We included the denominators
because the coefficients $p_i$ and $q_i$ in this equation (as it is written)
are needed in the statement of condition (V2) in \ref{abc14}.

\begin{notation}\label{abc60}
Let $(W,S)$, $\Phi$ and ${\mathbb A}$
be as in \ref{abc0}. To each $\alpha\in\Phi$ we associate the reflection
$s_\alpha$ given by 
$$s_\alpha(v)=v-2(v\cdot\alpha)\alpha$$
for each $v\in{\mathbb A}$. A {\it wall} of $\Phi$ is the fixed point set
of one of these reflection. A {\it Weyl chamber} of $\Phi$ is a 
connected component of ${\mathbb A}$ with all the walls removed.
We call the closure of a Weyl chamber a {\it sector} of $\Phi$.
The sectors are polyhedral cones; a {\it face} of $\Phi$ is a
face of one of these cones. The set of all faces of $\Phi$ forms
a simplicial complex called the {\it Coxeter complex} of $(W,S)$ (or $\Phi$).
We will denote this simplicial complex by 
$\Sigma(\Phi)$. To each
element $\alpha$ of $\Phi$ we associate the half-space
$$H_\alpha:=\{v\in{\mathbb A}\mid v\cdot\alpha\ge0\}.$$
A {\it root} of the Coxeter complex is the set of faces contained
in one of these half-spaces. The map $\alpha\mapsto H_\alpha$ thus gives
rise to a canonical bijection 
from $\Phi$ to the set of roots of $\Sigma(\Phi)$.
\end{notation}

\begin{conventions}\label{abc70}
Let $\Delta$ be an irreducible spherical building. 
Then every apartment of $\Delta$ is isomorphic to the Coxeter complex $\Sigma(\Phi)$
for some $\Phi$ as in \ref{abc0}. The corresponding
Coxeter system $(W,S)$ is usually called the {\it type} of $\Delta$.
We prefer, instead, to say that $\Delta$ is {\it of type $\Phi$}.
(Thus a building of type ${\sf B}_\ell$ is the same thing as building
of type ${\sf C}_\ell$.)
Let $\Sigma$ be an apartment of $\Delta$. A {\it root} of $\Sigma$ is the image
of a root of $\Sigma(\Phi)$ under an isomorphism from $\Sigma(\Phi)$ to $\Sigma$.
Thus for each such isomorphism we
have a canonical bijection from $\Phi$ to the set of roots of $\Sigma$.
We will usually assume that an isomorphism from $\Sigma(\Phi)$ to $\Sigma$ is fixed and
identify $\Phi$ with the set of roots of $\Sigma$ via this bijection.
\end{conventions}

\begin{definition}\label{abc2}
Let $\Delta$ be an irreducible spherical building of rank at least two.
For each root $\alpha$ of $\Delta$ (i.e.~of some apartment of $\Delta$), let 
$U_\alpha$ be the intersection of the stabilizers in ${\rm Aut}(\Delta)$ of 
all the chambers that are contained in some
panel of $\Delta$ which contains two chambers in $\alpha$. It follows from 
Corollary~3.14 in \cite{me-sph} that
$U_\alpha$ acts trivially on the set of chambers in $\alpha$ itself.
(The subgroups of the form $U_\alpha$ are called the {\it root groups} of $\Delta$.)
The building $\Delta$ is {\it Moufang} 
(equivalently, ``$\Delta$ satisfies the {\it Moufang condition}'')
if the following hold:
\begin{thmenum}
\item $\Delta$ is thick (i.e.~every panel contains at least three chambers).
\item For each root $\alpha$ of $\Delta$,
the root group $U_\alpha$ acts transitively on the set of apartments containing $\alpha$.
\end{thmenum}
\end{definition}

\noindent
Tits showed that thick
irreducible spherical buildings of rank at least three, as well as all
the irreducible residues of rank two of such a building, {\it always} satisfy
the Moufang condition; see, for example, Theorems~11.6 and~11.8 in \cite{me-sph}
for a proof.
Irreducible spherical buildings of rank at least two satisfying the 
Moufang condition were classified in \cite{tits-sph} and \cite{TW}.
See \cite[30.14]{weiss} for a summary of the results. 

\begin{proposition}\label{abc8}
Let $\Delta$ be an irreducible spherical building of rank $\ell\ge2$ satisfying
the Moufang property. Then for each root $\alpha$ of $\Delta$, 
the root group $U_\alpha$ acts sharply transitively
on the set of apartments containing $\alpha$.
\end{proposition}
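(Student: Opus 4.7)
The Moufang hypothesis provides the transitive part of the assertion, so the task is to establish freeness: if $u \in U_\alpha$ satisfies $u(\Sigma) = \Sigma$ for some apartment $\Sigma$ containing $\alpha$, then $u = 1$. My plan is to work in three steps of increasing difficulty.

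The first step is to observe that $u$ fixes every chamber of $\alpha$. This is immediate from the definition of $U_\alpha$ in \ref{abc2}, combined with the cited Corollary~3.14 of \cite{me-sph}, which upgrades ``fixes every chamber in a panel having two chambers in $\alpha$'' to ``fixes every chamber of $\alpha$.'' The second step is to show that $u$ fixes every chamber of $\Sigma$. Since elements of $U_\alpha$ pointwise stabilize panels, they act type-preservingly, so $u|_\Sigma$ is a type-preserving automorphism of the Coxeter complex $\Sigma \cong \Sigma(\Phi)$ fixing at least one chamber; by the regularity of the $W$-action on chambers, $u|_\Sigma$ is the identity, and in particular $u$ fixes every chamber of the opposite root $-\alpha$.

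The third and decisive step is to conclude that $u = 1$ on all of $\Delta$. My strategy is a reduction to rank~$2$. For each panel $P$ on $\partial\alpha$ and each rank-$2$ residue $R$ containing $P$, the intersection $R \cap \Sigma$ is an apartment of the thick spherical building $R$, containing $\alpha \cap R$ as a root; moreover $R$ is Moufang, either by hypothesis (if $\ell = 2$) or by the Tits--Weiss theorem that rank-$2$ residues of Moufang buildings of rank $\geq 3$ are themselves Moufang. Because $u$ fixes a chamber of $R$, it stabilizes $R$ setwise, and the restriction $u|_R$ lies in the rank-$2$ root group $U_{\alpha \cap R}$ and fixes an apartment of $R$ through $\alpha \cap R$. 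Sharp transitivity of root groups on apartments in Moufang polygons---proved case-by-case in \cite{TW} or drawn from the structure theory in \cite{me-sph}---forces $u|_R = \mathrm{id}$. Propagating this along galleries from $\Sigma$ using that each chamber of $\Delta$ may be reached by a sequence of adjacencies lying in rank-$2$ residues of the type considered (or, alternatively, by applying the same argument to any root through a given chamber), one concludes $u = 1$.

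The main obstacle is the third step, and specifically the appeal to sharp transitivity in rank~$2$: unlike mere transitivity, this is not built into the Moufang axioms of \ref{abc2} and must be established separately. The cleanest route is to invoke a structural result from \cite{me-sph}, namely that the intersection of $U_\alpha$ with the pointwise stabilizer of an apartment containing $\alpha$ is trivial in any Moufang polygon; the rank-$2$ classification \cite{TW} then provides an independent verification. Once this rank-$2$ input is in hand, the reduction in Step~3 closes the argument uniformly for all $\ell \geq 2$.
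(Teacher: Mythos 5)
The paper itself does not give a proof of this proposition; it simply cites Theorem~9.3 and Proposition~11.4 of \cite{me-sph} (the former handling the rank-two case, the latter the reduction from higher rank to rank two). Your proposal therefore attempts something the paper does not, namely to sketch an argument; the strategy you choose (reduce to Moufang polygons via rank-two residues on the wall of $\alpha$) is indeed the strategy those references use, so up to that point you are on the right track, and Steps~1 and~2 are both correct as written.

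The problem is the propagation in Step~3. You establish that $u$ restricts to the identity on each rank-two residue $R$ meeting $\partial\alpha$, and you then assert that every chamber of $\Delta$ ``may be reached by a sequence of adjacencies lying in rank-$2$ residues of the type considered.'' That is not true: the residues you control are exactly those containing a panel on the single wall $\partial\alpha$, and a generic gallery leaving $\Sigma$ will immediately enter rank-two residues that meet $\partial\alpha$ in no panel at all, where you have no information about $u$. Your parenthetical alternative (``applying the same argument to any root through a given chamber'') does not repair this, because $u$ lies in $U_\alpha$ for the fixed root $\alpha$ and there is no reason for $u$ to lie in $U_\beta$ for a different root $\beta$, so the machinery of Steps~1--3 simply does not apply to other roots. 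What is actually needed at this point is a rigidity statement of the form: a type-preserving automorphism of a thick spherical building that fixes an apartment pointwise together with enough of the star of a boundary chamber must be the identity. That is precisely the nontrivial content of Proposition~11.4 in \cite{me-sph}, and it cannot be waved away as a gallery-connectivity remark. So the outline is in the right spirit and the rank-two base case is legitimately delegated to \cite{TW} or \cite{me-sph}, but as written Step~3 does not close, and the claim used to close it is false.
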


\begin{proof} 
This is proved, for example, in Theorem~9.3 and Proposition~11.4 of \cite{me-sph}.
\end{proof}

Now suppose that $\Delta$ is a building of rank one. In other words,
$\Delta$ is simply a set (whose elements are the chambers of $\Delta$) 
and ${\rm Aut}(\Delta)$ is 
the full symmetric group on $\Delta$. The apartments of $\Delta$ 
are the two-element subsets of $\Delta$ and thus the roots of $\Delta$ are just the one-element
subsets of $\Delta$. Normally we will use letters like $x$ and $y$ to name elements of $\Delta$;
when we want to emphasize that an element of $\Delta$ is being considered
as a root, however, we will give it a name like $\alpha$ or $\beta$.

\begin{definition}\label{abc10}
Let $\Delta$ be a building of rank one, i.e.~of type $\Phi:={\sf A}_1$,
let $\Sigma$ be an apartment of $\Delta$ and let the two elements of 
$\Sigma$ be identified with the two elements of $\Phi$.
A {\it Moufang structure} on $\Delta$ is a collection 
$$(U_\alpha)_{\alpha\in\Phi}$$ 
of non-trivial subgroups of ${\rm Aut}(\Delta)$ such that the following hold:
\begin{thmenum}
\item For each $\alpha\in\Phi$, the subgroup $U_\alpha$ fixes $\alpha$ and
acts sharply transitively on $\Delta\backslash\{\alpha\}$; and
\item For each $\alpha\in\Phi$, the subgroup $U_\alpha$ is normalized by
the stabilizer of $\alpha$ in 
the group $G:=\langle U_\alpha,U_{-\alpha}\rangle$.
\end{thmenum}
The groups $U_\alpha^g$ for all $\alpha\in\Phi$ and all $g\in G$ are called
{\it root groups}. A Moufang structure on $\Delta$ is independent of the choice
of $\Sigma$ and its identification with $\Phi$ up to conjugation in the 
group $G$.
Since the root groups are required to be non-trivial, $\Delta$ can have a Moufang structure
only if it is thick, i.e.~if $|\Delta|\ge3$. When we say that $\Delta$ is Moufang, 
we mean that we have a particular Moufang structure on $\Delta$ in mind
(whose root groups we will always call 
$U_\alpha$, $U_\beta$, etc.). Note that with this convention,
\ref{abc8} holds also when $\ell=1$ (with ``satisfying the Moufang property'' interpreted
as meaning ``having a Moufang structure with root groups $U_\alpha$).
\end{definition}

\begin{remark}\label{abc19}
Let $\Delta$ be an irreducible spherical building of rank $\ell\ge2$ which satisfies
the Moufang condition and let $P$ be a panel of $\Delta$ viewed as a set of chambers.
For each chamber $x$ in $P$, let $\alpha$ be an arbitrary root of $\Delta$ containing
$x$ but no other chamber in $P$. By \ref{abc8},
$U_\alpha$ acts faithfully on $P$. Furthermore, the permutation group induced
by the root group $U_\alpha$ on $P$ is independent of the choice of 
$\alpha$. (This follows from Proposition~11.11 in \cite{me-sph}.) 
Hence every rank one residue of $\Delta$ inherits a canonical Moufang structure from $\Delta$
whose root groups are isomorphic to root groups of $\Delta$.
\end{remark}

\begin{proposition}\label{abc8a}
Let $\Delta$ be an irreducible spherical building of type $\Phi$
satisfying the Moufang condition as defined in \ref{abc2} and \ref{abc10} and
let $\Sigma$ be an apartment of $\Delta$ (to which we apply \ref{abc70}).
Then for each $\alpha\in\Phi$ (i.e.~to each root $\alpha$ of $\Sigma$), the following hold:
\begin{thmenum}
\item There exist maps $\lambda$ and $\kappa$ from $U_\alpha^*$ to $U_{-\alpha}^*$
such that for each $u\in U_\alpha^*$, the product
$$m_\Sigma(u):=\kappa(u)u\lambda(u)$$
maps $\Sigma$ to itself and induces the unique reflection $s_\alpha$ defined
in \ref{abc60} on $\Phi$ (which interchanges the roots $\alpha$ and $-\alpha$).
\item $m_\Sigma(u)^{-1}=m_\Sigma(u^{-1})$ for each $u\in U_\alpha$.
\item $m_\Sigma(\kappa(u))=m_\Sigma(\lambda(u))=m_\Sigma(u)$ for all $u\in U_\alpha$.
\end{thmenum}
\end{proposition}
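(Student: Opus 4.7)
The plan is to construct $\lambda$ and $\kappa$ in~(i) from the sharp transitivity of root groups (via~\ref{abc10}.i in the rank one case, via~\ref{abc8} in higher rank), and then to deduce~(ii) and~(iii) from a uniqueness statement that is essentially built into that construction.

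For~(i) I first handle the rank one case, where $\Sigma=\{\alpha,-\alpha\}$. Given $u\in U_\alpha^*$, the element $u$ cannot fix $-\alpha$, for otherwise $u$ would lie in the pointwise stabilizer of $\Sigma$ and~\ref{abc10}.i would force $u=1$. I then define $\lambda(u)\in U_{-\alpha}^*$ to be the unique element sending $\alpha$ to $u^{-1}(-\alpha)$ and $\kappa(u)\in U_{-\alpha}^*$ to be the unique element sending $u(-\alpha)$ to $\alpha$, with existence and uniqueness coming from sharp transitivity of $U_{-\alpha}$ on $\Delta\setminus\{-\alpha\}$. Using that $\lambda(u)$ and $\kappa(u)$ fix $-\alpha$, a direct check shows that $\kappa(u)u\lambda(u)$ swaps $\alpha$ and $-\alpha$, so it stabilizes $\Sigma$ and induces $s_\alpha$. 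For $\ell\ge 2$ the argument is parallel, with~\ref{abc8} replacing sharp transitivity on chambers: each $v\in U_{-\alpha}^*$ is determined by the apartment $v\Sigma$, and I choose $\kappa(u),\lambda(u)$ so that $\kappa(u)u\lambda(u)$ stabilizes $\Sigma$ while reversing the roles of $\alpha$ and $-\alpha$.

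A byproduct of this construction, to be extracted explicitly and then reused, is the following \emph{uniqueness statement}: $m_\Sigma(u)$ is the only element of the form $v_1 u v_2$ with $v_1,v_2\in U_{-\alpha}^*$ that stabilizes $\Sigma$ and induces $s_\alpha$. Part~(ii) then falls out: $m_\Sigma(u)^{-1}=\lambda(u)^{-1}u^{-1}\kappa(u)^{-1}$ has that form with $u^{-1}$ in the middle, stabilizes $\Sigma$, and induces $s_\alpha^{-1}=s_\alpha$, so it coincides with $m_\Sigma(u^{-1})$ by uniqueness applied to $u^{-1}$.

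For part~(iii), applying the construction of~(i) to $\kappa(u)\in U_{-\alpha}^*$ with the roles of $\alpha$ and $-\alpha$ interchanged produces $m_\Sigma(\kappa(u))$ of the form $x_1\kappa(u)x_2$ with $x_1,x_2\in U_\alpha^*$, again stabilizing $\Sigma$ and inducing $s_\alpha$; the symmetric uniqueness identifies it as the unique element of this form with these properties. It therefore suffices to rewrite $m_\Sigma(u)$ in this form. Since $m_\Sigma(u)$ swaps $\alpha$ and $-\alpha$, the normalization axiom~\ref{abc10}.ii (in rank one) together with the intrinsic definition of root groups in~\ref{abc2} (in higher rank) forces $m_\Sigma(u)U_{-\alpha}m_\Sigma(u)^{-1}=U_\alpha$. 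Setting $a:=m_\Sigma(u)\lambda(u)^{-1}m_\Sigma(u)^{-1}\in U_\alpha$, substituting $m_\Sigma(u)=\kappa(u)u\lambda(u)$ in the identity $m_\Sigma(u)\lambda(u)=a^{-1}m_\Sigma(u)$ and cancelling $\lambda(u)$ on the right yields $m_\Sigma(u)=a^{-1}\kappa(u)u$; the case $a=1$ is excluded because it would force $\lambda(u)=1$. This presents $m_\Sigma(u)$ in the desired form, so uniqueness gives $m_\Sigma(\kappa(u))=m_\Sigma(u)$; a parallel factorization on the other side yields $m_\Sigma(\lambda(u))=m_\Sigma(u)$. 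The step I expect to require the most care is the normalization $m_\Sigma(u)U_{-\alpha}m_\Sigma(u)^{-1}=U_\alpha$ in rank one, which is not formal from~\ref{abc10} alone but requires combining~\ref{abc10}.i and~\ref{abc10}.ii with the fact that $m_\Sigma(u)$ swaps $\alpha$ and $-\alpha$ to pin down the conjugate of $U_{-\alpha}$ among the subgroups of $G$ fixing $\alpha$.
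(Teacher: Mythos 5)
Your construction of $\lambda(u)$ and $\kappa(u)$ in rank one (via sharp transitivity) and your deduction of (ii) from the uniqueness of the decomposition $v_1uv_2$ are correct and follow the standard line; the paper's own proof of \ref{abc8a} is simply a citation of \cite[6.1--6.3]{TW} together with \ref{abc8}, which is the same strategy.

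The step you yourself flag as delicate is, however, a genuine gap, and the way you propose to fill it does not work. You want $m_\Sigma(u)U_{-\alpha}m_\Sigma(u)^{-1}=U_\alpha$ in the rank-one case, and you say this follows from ``combining \ref{abc10}.i and \ref{abc10}.ii with the fact that $m_\Sigma(u)$ swaps $\alpha$ and $-\alpha$.'' What \ref{abc10}.ii and the swap do give you is that $N:=m_\Sigma(u)U_{-\alpha}m_\Sigma(u)^{-1}$ is a regular normal subgroup of $G_\alpha$ in its action on $\Delta\setminus\{\alpha\}$ (since $m_\Sigma(u)^{-1}G_\alpha m_\Sigma(u)=G_{-\alpha}$ normalizes $U_{-\alpha}$), and that $U_\alpha$ is another such. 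But two distinct regular normal subgroups can perfectly well coexist: if $V$ is a nonabelian group, then in $\mathrm{Hol}(V)=V\rtimes\mathrm{Aut}(V)$ acting on $V$ both the left and the right regular copies of $V$ are normal and regular, and they are distinct. So ``regular normal subgroup of $G_\alpha$ fixing $\alpha$'' does not by itself single out $U_\alpha$, and no amount of appealing to \ref{abc10}.ii alone can force $N=U_\alpha$. This is precisely the coherence condition (``$\mu$-map normalization,'' $U_x^g=U_{x^g}$ for $g\in G^\dagger$) that is often taken as an \emph{additional} axiom in modern treatments of Moufang sets; \ref{abc10} as written does not obviously imply it, and one either has to impose it, or derive it from structural information not present in \ref{abc10}.i--ii alone. (For $\ell\ge2$ you are right that \ref{abc2} gives the normalization intrinsically, since $m_\Sigma(u)$ carries the root $-\alpha$ to $\alpha$ as subsets of $\Delta$ and $U_\alpha$ depends only on the root.) So your proof of (iii) is sound for $\ell\ge2$ but has a hole for $\ell=1$.

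Two caveats worth noting. First, this hole is also present, at least on the surface, in the paper's own one-line proof: \cite[6.1--6.3]{TW} is a statement about Moufang polygons (rank two), and the paper's extension to rank one leans on the remark at the end of \ref{abc10} without further justification. Second, every Moufang structure the paper actually uses in rank one arises inside a rank-two Moufang building via \ref{abc49}, where the normalization is inherited from \ref{abc2} applied upstairs; so in the applications the hypothesis is satisfied, but a proof of \ref{abc8a}.iii at the stated level of generality should either postulate the normalization or prove it, rather than assert that it follows from \ref{abc10}.i--ii.

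Finally, a small remark: your sketch of (i) in rank $\ge2$ (``the argument is parallel... I choose $\kappa(u),\lambda(u)$ so that...'') glosses over the actual construction (one has to use that $U_{-\alpha}$ acts sharply transitively on apartments containing $-\alpha$, note that $u\Sigma\ne\Sigma$ contains $\alpha$, and combine these); this is fine as a summary but is more than a mechanical transcription of the rank-one argument.
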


\begin{proof}
The first assertion is a consequence of \ref{abc8} and the other two follow
from the first; see, for example, in \cite[6.1--6.3]{TW}.
\end{proof}

\begin{proposition}\label{abc13}
Let $\Delta$ be an irreducible spherical building 
satisfying the Moufang condition as defined in \ref{abc2} and \ref{abc10}
and let $G^\dagger$ be 
the subgroup of ${\rm Aut}(\Delta)$ generated by all the root groups of $\Delta$. Then $G^\dagger$
acts transitively on the set of all
pairs $(\Sigma,C)$, where $\Sigma$ is an apartment of $\Delta$ and $C$ is a chamber of 
$\Sigma$. 
\end{proposition}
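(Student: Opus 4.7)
The plan is to split the required transitivity into two pieces: (a) the stabilizer of any fixed apartment $\Sigma_0$ in $G^\dagger$ acts transitively on the chambers of $\Sigma_0$, and (b) $G^\dagger$ itself acts transitively on the set of apartments of $\Delta$. These two statements clearly combine to give the proposition, since given two pairs $(\Sigma_1,C_1)$ and $(\Sigma_2,C_2)$ one can first use~(b) to move $\Sigma_1$ onto $\Sigma_2$ and then use~(a) inside $\Sigma_2$ to place the chamber correctly.

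Part~(a) is an immediate consequence of \ref{abc8a}. For each root $\alpha$ of $\Sigma_0$ and each $u\in U_\alpha^*$, the element $m_{\Sigma_0}(u)\in\langle U_\alpha,U_{-\alpha}\rangle\subseteq G^\dagger$ stabilizes $\Sigma_0$ and induces on it the reflection $s_\alpha$. Since the $s_\alpha$, $\alpha\in\Phi$, generate the Weyl group $W$ and $W$ acts transitively on the chambers of the Coxeter complex $\Sigma_0$, we obtain the required transitivity.

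For part~(b) I would first reduce to the case of two apartments sharing a chamber: given arbitrary $\Sigma,\Sigma'$, pick a chamber in each and apply the building axiom that any two chambers lie in a common apartment to produce an intermediate apartment, halving the task. Assuming now that $\Sigma$ and $\Sigma'$ share at least one chamber, set $n(\Sigma,\Sigma'):=|\Sigma|-|\Sigma\cap\Sigma'|$ and argue by induction on $n$. If $n>0$, the standard fact that $\Sigma\cap\Sigma'$ is a convex subcomplex of $\Sigma$ (see \cite{me-sph}) allows me to choose a root $\alpha$ of $\Sigma$ with $\Sigma\cap\Sigma'\subseteq\alpha$ and whose bounding wall supports a panel $P$ having one chamber $D'\in\Sigma\cap\Sigma'$ and the other $D\in(-\alpha)\cap(\Sigma\setminus\Sigma')$; thickness then provides a third chamber $D''$ of $P$, which I may take to be the second chamber of $P$ lying in $\Sigma'$. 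By \ref{abc8} there is a (unique) $u\in U_\alpha$ with $u(D)=D''$, and since $u$ fixes $\alpha$ pointwise one has $(\Sigma\cap\Sigma')\cup\{D''\}\subseteq u(\Sigma)\cap\Sigma'$. Thus $n(u(\Sigma),\Sigma')<n(\Sigma,\Sigma')$, and the induction closes, producing an element of $G^\dagger$ that carries $\Sigma$ to $\Sigma'$.

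The main obstacle is the inductive step of~(b): one must combine the convexity of $\Sigma\cap\Sigma'$, the existence of the supporting root $\alpha$ on the correct side of $P$, the thickness of $\Delta$, and the sharp transitivity of $U_\alpha$ on apartments containing $\alpha$ (from \ref{abc8}), in order to guarantee that the number of shared chambers strictly increases at each step. Once this combinatorial step is pinned down, the remainder is a direct appeal to \ref{abc8a} and \ref{abc8}.
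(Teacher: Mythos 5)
Your proof is essentially correct, and as far as I can tell it follows the same route as Proposition~11.12 of \cite{me-sph}, which is all the paper itself offers as a proof. Part~(a) via \ref{abc8a} is exactly right, and the inductive scheme in~(b)---reduce to a shared chamber, then enlarge $\Sigma\cap\Sigma'$ one panel at a time using elements of root groups---is the standard argument.

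Two points deserve attention. First, the proposition is also asserted in rank one (see the remark immediately after it: there it just says $G^\dagger$ is 2-transitive on $\Delta$), and your inductive step in (b) cannot be run literally in that case, since there are no walls or panels. Once $\Sigma=\{x,y\}$ and $\Sigma'=\{x,y'\}$ share a chamber $x$, however, sharp transitivity of $U_x$ on $\Delta\setminus\{x\}$ (\ref{abc10}.i) finishes immediately; this deserves a sentence. Second, in the inductive step the logic should be inverted: by connectedness of $\Sigma$ choose a panel $P$ with one chamber $D'\in\Sigma\cap\Sigma'$ and the other $D\in\Sigma\setminus\Sigma'$, \emph{then} let $\alpha$ be the root of $\Sigma$ containing $D'$ but not $D$; convexity of $\Sigma\cap\Sigma'$ together with the projection-to-$P$ (gate) argument then forces $\Sigma\cap\Sigma'\subseteq\alpha$, which is the content of your ``convexity allows me to choose.'' Note also that $D''$ comes not from thickness but from $\Sigma'$ being an apartment: $\Sigma'$ meets the panel $P$ in exactly two chambers, $D'$ and one other; that other is $D''$, and it is automatically distinct from $D$ (since $D\notin\Sigma'$) and lies outside $\Sigma$ (since $\Sigma$ already contains the two chambers $D,D'$ of $P$), which is what makes $n$ strictly drop. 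These are small repairs; the architecture is sound.
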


\begin{proof} 
This is proved, for example, in Proposition~11.12 of \cite{me-sph}.
\end{proof}

\noindent
Note in the rank one case, \ref{abc13}
just says that $G^\dagger$ is a 2-transitive permutation group on $\Delta$.

\begin{definition}\label{abc12}
Let $\Delta$ be an irreducible spherical building of type 
$\Phi$ satisfying the Moufang condition and
let $\Sigma$ be an apartment of $\Delta$ (to which we apply \ref{abc70}).
For each  $\alpha\in\Phi$, let $U_\alpha$ be the
corresponding root group. The {\it root datum} of $\Delta$ (based at $\Sigma$)
is the pair $(\Sigma,(U_\alpha)_{\alpha\in\Phi})$. 
\end{definition}

Let $\Delta$ be as in \ref{abc12}. By \ref{abc13}, the root datum of $\Delta$ is, up to 
conjugation in the group $G^\dagger$, independent of the choice of the apartment $\Sigma$. 
Note that a root datum and a Moufang structure on $\Delta$
(as defined in \ref{abc10}) are essentially
the same thing when $\Delta$ has rank one.
By \cite[40.17]{TW},
$\Delta$ is uniquely determined by its root datum when the rank of $\Delta$ is at least two.

\begin{theorem}\label{abc17}
Let $\Delta$ be an irreducible spherical building of type $\Phi$ 
satisfying the Moufang condition,
let the notion of the interval from one element of $\Phi$ to another be as in \ref{abc16}
and let $\Sigma$ be an apartment of $\Delta$ (to which we apply \ref{abc70}).
Then for all ordered pairs $\alpha,\beta$ of elements of $\Phi$ such that 
$\beta\ne\pm\alpha$,
$$[U_\alpha,U_\beta]\subset U_{\gamma_1}U_{\gamma_2}\cdots U_{\gamma_s},$$
where $(\gamma_1,\gamma_2,\ldots,\gamma_s)$ is the interval $(\alpha,\beta)$,
if the interval $(\alpha,\beta)$ is not empty and $[U_\alpha,U_\beta]=1$ if
it is.
\end{theorem}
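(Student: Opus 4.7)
The plan is to reduce to the rank-two case and then invoke the commutator relations already established for Moufang polygons. If $\Delta$ has rank one, then $\Phi=\{\pm\alpha\}$ and the hypothesis $\beta\neq\pm\alpha$ is vacuous, so there is nothing to prove; I therefore assume $\Delta$ has rank at least two.

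Given $\alpha,\beta\in\Phi$ with $\beta\neq\pm\alpha$, let $V:=\mathbb{R}\alpha+\mathbb{R}\beta$ and $\Phi':=\Phi\cap V$. Then $\Phi'$ is a rank-two sub-root system of $\Phi$: either irreducible (of type ${\sf A}_2$, ${\sf B}_2$, ${\sf G}_2$, or ${\sf I}_2(n)$) or of reducible type ${\sf A}_1\times{\sf A}_1$. By the standard correspondence between subsets of an apartment cut out by linear subspaces and residues of the building, $\Phi'$ determines a rank-two residue $R\subset\Delta$ whose apartment $\Sigma'\subset\Sigma$ is identified with $\Sigma(\Phi')$. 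Repeating the argument of \ref{abc19} now for each of the panels of $R$, one sees that for every $\gamma\in\Phi'$ the root group $U_\gamma\le{\rm Aut}(\Delta)$ stabilizes $R$, acts faithfully on it, and coincides there with the root group of $R$ attached to $\gamma$. Since the interval $(\alpha,\beta)$ computed inside $\Phi$ equals the one computed inside $\Phi'$, the theorem reduces to the corresponding statement inside $R$.

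I would then settle the rank-two statement as follows. When $\Phi'\cong{\sf A}_1\times{\sf A}_1$, $R$ is the direct product of two rank-one residues on which $U_\alpha$ and $U_\beta$ act on distinct factors, so they commute; moreover no positive combination of the orthogonal unit vectors $\alpha/|\alpha|$ and $\beta/|\beta|$ lies in $\Phi'$, so the interval $(\alpha,\beta)$ is empty and both sides of the asserted inclusion are trivial. In the irreducible case $R$ is a Moufang $n$-gon, and the claim is the standard cyclic commutator relation $[U_i,U_j]\subset U_{i+1}\cdots U_{j-1}$ for the root groups of such a polygon, listed in cyclic order. Under the identification of cyclic order with the ordering by $\angle(\alpha,\gamma)$, the positivity of the coefficients $p_i,q_i$ in \ref{abc16b} precisely singles out the roots on the shorter arc from $\alpha$ to $\beta$, matching the indexing $\gamma_1,\ldots,\gamma_s$.

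The main obstacle is the rank-two statement itself, which is not a formal consequence of \ref{abc2} and \ref{abc8} but is one of the central structural results in the theory of Moufang polygons; I would simply quote it from \cite{TW}. Once it is in hand, the passage to higher rank via the residue $R$ is routine, and the ${\sf A}_1\times{\sf A}_1$ case is the only separate verification needed.
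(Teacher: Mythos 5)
The paper disposes of this statement by citing Ronan's Lemma~6.12(ii); you instead reduce to rank two via a residue and invoke the Moufang polygon commutator relations from \cite{TW}, which is a legitimate and fairly standard alternative. Your reduction is sound in outline: $\partial\alpha\cap\partial\beta$ is a flat of the Coxeter arrangement and hence the span of a codimension-two face $F$ of $\Sigma$, the residue $R$ at $F$ is a rank-two building whose apartment $\Sigma\cap R$ has root set $\Phi':=\Phi\cap(\mathbb{R}\alpha+\mathbb{R}\beta)$, and by Propositions~11.10--11.11 of \cite{me-sph} each $U_\gamma$ with $\gamma\in\Phi'$ stabilizes $R$ and restricts faithfully onto the corresponding root group of $R$. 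The one step you elide is the passage back from ${\rm Aut}(R)$ to ${\rm Aut}(\Delta)$: from $[u,v]|_R=(w_1\cdots w_s)|_R$ with $w_i\in U_{\gamma_i}$ one still has to deduce $[u,v]=w_1\cdots w_s$. The argument is that $z:=[u,v](w_1\cdots w_s)^{-1}$ fixes $\alpha\cap\beta$ chamberwise (each factor does, since $\alpha\cap\beta$ is contained in every $\gamma_i$) and acts trivially on $R$; because $R\cap\Sigma$ contains a chamber in $(-\alpha)\cap(-\beta)$ opposite a chamber of $\alpha\cap\beta\cap\Sigma$, $z$ fixes two opposite chambers of $\Sigma$, hence fixes $\Sigma$ chamberwise, and rigidity (Chapter~9 of \cite{me-sph}, applied to a thick panel of $R$) then gives $z=1$. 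This is routine once stated, but it is genuinely needed and should not be replaced by the bare assertion that the theorem ``reduces to the corresponding statement inside $R$.''
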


\begin{proof}
This is proved in \cite[6.12(ii)]{ronan}.
\end{proof}

\begin{definition}\label{abc14}
Let $\Delta$ be an irreducible spherical building of type $\Phi$ satisfying the Moufang condition,
let $\Sigma$ be an apartment of $\Delta$ (to which we apply \ref{abc70})
and let
$$(\Sigma,(U_\alpha)_{\alpha\in\Phi})$$
be the root datum of $\Delta$ based at $\Sigma$ as defined in \ref{abc12}.
A {\it valuation} of
this root datum is a collection $\varphi:=(\varphi_\alpha)_{\alpha\in\Phi}$ of 
non-constant maps $\varphi_\alpha$ from $U_\alpha^*$ to ${\mathbb R}$
such that the following hold:
\begin{thmenum}
\item[{\bf V1:}] For each $\alpha\in\Phi$ and each $k\in{\mathbb R}$, the set
$$U_{\alpha,k}:=\{u\in U_\alpha\mid\varphi_\alpha(u)\ge k\}$$
is a subgroup of $U_\alpha$, where we assign $\varphi_\alpha(1)$ the value $\infty$
(so that $1\in U_{\alpha,k}$ for all $k$).
\item[{\bf V2:}] For all $\alpha,\beta\in\Phi$ such that $\alpha\ne\pm\beta$ and 
for all $k,l\in{\mathbb R}$,
$$[U_{\alpha,k},U_{\beta,l}]=U_{\gamma_1,p_1k+q_1l}U_{\gamma_2,p_2k+q_2l}\cdots
U_{\gamma_s,p_sk+q_sl},$$
where $\gamma_i$, $p_i$, $q_i$ and $s$ are as in \ref{abc16b}.
\item[{\bf V3:}] For all $\alpha,\beta\in\Phi$, all $u\in U_\alpha^*$ and all 
$g\in U_\beta$, the quantity
$$t:=\varphi_{s_\alpha(\beta)}(g^{m_\Sigma(u)})-\varphi_\beta(g)$$
is independent $g$ and if $\alpha=\beta$, then $t=-2\varphi_\alpha(u)$.
Here $s_\alpha$ is as in \ref{abc60} and $m_\Sigma(u)$ is as in \ref{abc8a}.i. 
\end{thmenum}
\end{definition}

\noindent
Note that the condition (V2) is vacuous when the rank of $\Delta$ is one.

\begin{definition}\label{abc30}
Let $\Delta$, $\Phi$ and $\Sigma$ be as in \ref{abc14} and 
suppose that $\varphi$ and $\psi$ are two valuations of the root datum
of $\Delta$ based at $\Sigma$. Then $\varphi$ and $\psi$ are {\it equipollent}
if for some $x$ in the ambient Euclidean space ${\mathbb A}$ of the root system $\Phi$,
$$\varphi_\alpha(u)=\psi_\alpha(u)+\alpha\cdot x$$
for all $\alpha\in\Phi$ and all $u\in U_\alpha^*$ (in which case we write
$\varphi=\psi+x$).
\end{definition}

\begin{proposition}\label{abc84}
Let $\Delta$, $\Phi$ and $\Sigma$ be as in \ref{abc14} and 
suppose that $\varphi$ and $\psi$ are two valuations of the root datum
of $\Delta$ based at $\Sigma$ such that $\varphi_\alpha=\psi_\alpha$ 
for some $\alpha\in\Phi$. Then $\psi$ and $\varphi$ are equipollent (as defined
in \ref{abc30}).
\end{proposition}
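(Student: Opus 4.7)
My plan is to construct a vector $x \in \mathbb{A}$ such that $\varphi = \psi + x$, with $\alpha \cdot x = 0$, proceeding in two stages: first, to show that $\delta_\gamma := \varphi_\gamma - \psi_\gamma$ is constant on $U_\gamma^*$ for every $\gamma \in \Phi$; second, to show that these constants arise from a single vector $x$.

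The starting point is (V3) applied with both roots equal to $\alpha$. Since $m_\Sigma(u)$ is defined purely group-theoretically in \ref{abc8a} and is thus independent of the valuation, the identity $\varphi_{-\alpha}(g^{m_\Sigma(u)}) = \varphi_\alpha(g) - 2\varphi_\alpha(u)$, together with its analogue for $\psi$, gives $\varphi_{-\alpha} = \psi_{-\alpha}$ (since $g \mapsto g^{m_\Sigma(u)}$ is a bijection of $U_\alpha^*$ onto $U_{-\alpha}^*$). This already settles the rank-one case with $x = 0$. For rank at least two, the same subtraction trick applied to (V3) for the pair $(\beta,\alpha)$ with $\beta \ne \pm\alpha$ shows that $\delta_{s_\beta(\alpha)}$ is constant on $U_{s_\beta(\alpha)}^*$; the argument in fact works whenever $\delta_\gamma$ is merely constant on $U_\gamma^*$, not only zero, so by iteration the set $R := \{\gamma \in \Phi : \delta_\gamma \text{ is constant on } U_\gamma^*\}$ is closed under every reflection $s_\beta$, hence is Weyl-invariant and contains the Weyl orbit of $\alpha$. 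In the non-simply-laced cases (${\sf B}_2$, ${\sf F}_4$, ${\sf G}_2$, and ${\sf I}_2(n)$ with $n$ even), the orbit $W\alpha$ consists only of roots of one length, so the remaining roots must be reached via (V2): for $\alpha', \beta' \in R$, comparing the identities $[U^\varphi_{\alpha',k}, U^\varphi_{\beta',l}] = \prod_i U^\varphi_{\gamma_i,\, p_ik+q_il}$ for $\varphi$ and for $\psi$ forces $\delta_{\gamma_i}$ to be constant on each $U_{\gamma_i}^*$ appearing in the interval $(\alpha',\beta')$. Hence $R = \Phi$.

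Setting $c_\gamma := \delta_\gamma \in \mathbb{R}$, the relations $c_{-\gamma} = -c_\gamma$ (from (V3) with equal roots) and $c_{\gamma_i} = p_i c_{\alpha'} + q_i c_{\beta'}$ (forced by the comparison of (V2) for $\varphi$ and $\psi$) are precisely the conditions identifying $\gamma \mapsto c_\gamma$ as the restriction to $\Phi$ of a linear functional on $\mathbb{A}$; this functional is represented by a vector $x \in \mathbb{A}$ in the sense of \ref{abc30}, and the hypothesis $c_\alpha = 0$ forces $\alpha \cdot x = 0$. Thus $\varphi = \psi + x$, proving equipollence. The most delicate step, I expect, is this last linearity verification in the non-simply-laced cases, where the normalization of the interval coefficients $p_i,q_i$ from \ref{abc16b} has to be matched carefully with the inner product appearing in \ref{abc30}.
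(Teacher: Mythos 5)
The paper delegates the proof to Tits and to \cite[Thm.~3.41]{weiss}, so there is no in-house argument to compare against; but your proposal is the expected proof and appears to be correct. Two remarks. First, the ``delicate'' normalization issue you flag at the end is in fact a non-issue in this paper's conventions: the reflection formula $s_\alpha(v)=v-2(v\cdot\alpha)\alpha$ in \ref{abc60} and the construction $\Phi=\{\ddot\alpha/|\ddot\alpha|\}$ in \ref{abc98} both presuppose $|\alpha|=1$ for every $\alpha\in\Phi$, and the commutator relations in \ref{abc6}.ii with roots such as $\sqrt{2}\alpha+\beta$ only make sense under this normalization. With $|\gamma_i|=|\alpha'|=|\beta'|=1$, equation \ref{abc16b} reads simply $\gamma_i=p_i\alpha'+q_i\beta'$, and the identity $c_{\gamma_i}=p_ic_{\alpha'}+q_ic_{\beta'}$ extracted from (V2) becomes exactly $(\gamma_i\cdot x)=(p_i\alpha'+q_i\beta')\cdot x$ once $c_{\alpha'}=\alpha'\cdot x$ and $c_{\beta'}=\beta'\cdot x$. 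Second, your last sentence (``these relations are precisely the conditions identifying $c$ as a linear functional'') is stated rather than argued; the clean way to close it is to pick a basis of simple roots $\alpha_1,\dots,\alpha_\ell$ of $\mathbb{A}$, define $x$ by $\alpha_j\cdot x=c_{\alpha_j}$, set $e_\gamma:=c_\gamma-\gamma\cdot x$ (which satisfies the same two relations and vanishes on simple roots), and then show $e_\gamma=0$ by induction on height: every positive root $\gamma$ of height $\ge 2$ equals $\gamma'+\alpha_j$ for some positive root $\gamma'$ of smaller height and some simple $\alpha_j$, and $\gamma$ lies in the interval $(\gamma',\alpha_j)$, so $e_\gamma=pe_{\gamma'}+qe_{\alpha_j}=0$; the relation $e_{-\gamma}=-e_\gamma$ handles the negatives. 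You should also say explicitly that the ``peeling off'' of factors from the (V2) product relies on uniqueness of decomposition in the root group product $U_{\gamma_1}\cdots U_{\gamma_s}$, which holds by \ref{abc17} and the standard unipotent coordinates for Moufang buildings. With these points spelled out, the argument is complete.
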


\begin{proof}
This holds by Proposition~6 in \cite{tits-como}. For more details, see
Theorem~3.41 in \cite{weiss}.
\end{proof}

\begin{definition}\label{abc32}
Let $\Delta$, $\Phi$ and $\Sigma$ be as in \ref{abc14}, 
let $\varphi=(\varphi_\alpha)_{\alpha\in\Phi}$ be a valuation 
of the root datum of $\Delta$ based at $\Sigma$ and let $\rho$ be an automorphism
of $\Delta$ mapping $\Sigma$ to itself. We will say that $\varphi$ is
{\it $\rho$-invariant} if 
$$\varphi_{\alpha}(u)=\varphi_{\alpha^\rho}(u^\rho)$$ 
for all $\alpha\in\Phi$ and all $u\in U_\alpha^*$.
\end{definition}

\begin{proposition}\label{abc40}
Let $\Delta$, $\Phi$ and $\Sigma$ be as in \ref{abc14},
let $\varphi$ be a valuation of the 
root datum of $\Delta$ based at $\Sigma$, let $w$ be an element of the root group 
$U_\alpha^*$ such that $\varphi_\alpha(w)=0$ 
and let $m_0=m_\Sigma(w)$. Then for each $\alpha\in\Phi$,
$$\varphi_\alpha(g^{m_0m_\Sigma(u)})-\varphi_\alpha(g)=2\varphi_\alpha(u)$$
for all $g,u\in U_\alpha^*$.
\end{proposition}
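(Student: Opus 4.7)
The plan is to apply axiom (V3) twice and patch the two applications together at the element $g^{m_0} \in U_{-\alpha}^*$. First, applying (V3) with $\alpha=\beta$ and $u=w$ (recalling that $\varphi_\alpha(w)=0$), we get
$$\varphi_{-\alpha}(g^{m_0}) - \varphi_\alpha(g) = -2\varphi_\alpha(w) = 0$$
for every $g \in U_\alpha^*$, so conjugation by $m_0$ carries the $\alpha$-valuation of $g$ onto the $-\alpha$-valuation of $g^{m_0}$. Second, applying (V3) with the given $u \in U_\alpha^*$ and $\beta=-\alpha$, the quantity
$$t(u) := \varphi_\alpha(h^{m_\Sigma(u)}) - \varphi_{-\alpha}(h)$$
is a constant (independent of $h \in U_{-\alpha}^*$). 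If we grant the identity $t(u)=2\varphi_\alpha(u)$, then taking $h=g^{m_0}$ and stringing the two steps together yields
$$\varphi_\alpha(g^{m_0 m_\Sigma(u)}) - \varphi_\alpha(g) = \varphi_{-\alpha}(g^{m_0}) + t(u) - \varphi_\alpha(g) = t(u) = 2\varphi_\alpha(u),$$
which is the desired formula.

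The one genuine obstacle is therefore to evaluate the constant $t(u)$ from (V3) at $\beta=-\alpha$. I would do this by exploiting the inverse element $m_\Sigma(u)^{-1} = m_\Sigma(u^{-1})$ supplied by \ref{abc8a}.ii, together with the observation that $\varphi_\alpha(u^{-1})=\varphi_\alpha(u)$ (which is immediate from (V1), since $U_{\alpha,k}$ is a subgroup). Applying (V3) with $\alpha=\beta$ and $u$ replaced by $u^{-1}$ gives
$$\varphi_{-\alpha}(g^{m_\Sigma(u)^{-1}}) = \varphi_\alpha(g) - 2\varphi_\alpha(u)$$
for all $g \in U_\alpha^*$. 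Now substitute $h := g^{m_\Sigma(u)} \in U_{-\alpha}^*$ into the defining relation for $t(u^{-1})$: since $h^{m_\Sigma(u)^{-1}} = g$, we obtain $\varphi_\alpha(g) = \varphi_{-\alpha}(g^{m_\Sigma(u)}) + t(u^{-1}) = \varphi_\alpha(g) - 2\varphi_\alpha(u) + t(u^{-1})$, so $t(u^{-1})=2\varphi_\alpha(u)$. Replacing $u$ by $u^{-1}$ and using again that $\varphi_\alpha$ is invariant under inversion gives $t(u)=2\varphi_\alpha(u)$, as required.

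Combining these ingredients produces the stated equality with essentially no extra calculation. The structure of the argument is thus: a short deduction from (V3) to dispose of the $m_0$-factor (where the hypothesis $\varphi_\alpha(w)=0$ is used exactly to kill the constant), an application of (V3) at $\beta=-\alpha$ to handle the $m_\Sigma(u)$-factor, and a symmetry/inversion argument to identify the otherwise anonymous constant $t(u)$ as $2\varphi_\alpha(u)$. The last of these is the only step requiring real work, and I expect it to be the main conceptual content of the proof.
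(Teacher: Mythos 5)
Your proof is correct and has the same overall shape as the paper's argument---two chained applications of (V3), one killed by the normalization $\varphi_\alpha(w)=0$, one carrying the $2\varphi_\alpha(u)$ contribution---but it handles the second application by a genuinely different mechanism. The paper evaluates the relevant constant by passing to $v=\kappa(u)\in U_{-\alpha}^*$ and invoking the external identities $m_\Sigma(u)=m_\Sigma(\kappa(u))$ and $\varphi_{-\alpha}(\kappa(u))=-\varphi_\alpha(u)$, so that it only ever uses (V3) with both roots equal. You instead evaluate the anonymous constant $t(u)$ from (V3) at $\beta=-\alpha$ by exploiting the purely formal identity $m_\Sigma(u^{-1})=m_\Sigma(u)^{-1}$ from \ref{abc8a}.ii together with the elementary observation $\varphi_\alpha(u^{-1})=\varphi_\alpha(u)$ (which, as you note, is forced by (V1) since $U_{\alpha,k}$ is a subgroup). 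Your route is more self-contained---it never touches the $\kappa$ map or the external propositions about it---at the modest cost of one extra invocation of (V3) and the inversion trick; the paper's route is shorter but leans on results proved elsewhere.

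One small bookkeeping slip: the displayed relation you derive, $\varphi_{-\alpha}(g^{m_\Sigma(u)^{-1}}) = \varphi_\alpha(g) - 2\varphi_\alpha(u)$, is stated for the conjugator $m_\Sigma(u)^{-1}$, but in the subsequent chain of equalities you actually use $\varphi_{-\alpha}(g^{m_\Sigma(u)}) = \varphi_\alpha(g) - 2\varphi_\alpha(u)$, which is (V3) applied to the original $u$ rather than $u^{-1}$. Both statements are true, so the argument stands, but the citation should match the step. A cleaner way to arrange the same idea is to compute $t(u)$ directly by taking $h=g^{m_\Sigma(u)^{-1}}\in U_{-\alpha}^*$, so that $h^{m_\Sigma(u)}=g$ and the displayed relation with $u^{-1}$ is exactly what is needed; then the final replacement of $u$ by $u^{-1}$ can be dropped.
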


\begin{proof}
Let $\beta$ be the root opposite $\alpha$ in $\Sigma$.
Let $g,u\in U_\alpha^*$ and let $v=\kappa(u)\in U_\beta^*$, 
where $\kappa$ is as in \ref{abc8a}.i.
Then $m_\Sigma(u)=m_\Sigma(v)$ by Proposition~11.24 in \cite{me-sph}, 
$\varphi_\beta(v)=-\varphi_\alpha(u)$ by Proposition~3.25 in \cite{weiss} 
(or \cite[10.10]{ronan}) and
$$\varphi_\alpha(y^{m_\Sigma(v)})=\varphi_\beta(y)-2\varphi_\beta(v)$$ 
for all $y\in U_\beta$ by condition (V3) (with both roots equal to $\beta$).
Thus 
\begin{align*}
\varphi_\alpha(g^{m_0m_\Sigma(u)})&=\varphi_\alpha(g^{m_0m_\Sigma(v)})\cr
&=\varphi_\beta(g^{m_0})-2\varphi_\beta(v)\cr
&=\varphi_\beta(g^{m_0})+2\varphi_\alpha(u)
\end{align*}
By another application of condition (V3) (this time with both roots equal to $\alpha$)
and the choice of $w$, we have
$$\varphi_\beta(g^{m_0})=\varphi_\alpha(g).$$
\end{proof}

\section{Non-discrete Euclidean Buildings}
In this section we assemble a few basic facts about non-discrete Euclidean
buildings. We start with the definition.

\begin{notation}\label{abc61}
Let $W$, ${\mathbb A}$ and $\Phi$ be as in \ref{abc0}.
Let ${\mathbb W}$ denote the group generated by 
$W$ and the group $T$ consisting of all translations of ${\mathbb A}$. 
Thus ${\mathbb W}$ is a group of isometries of ${\mathbb A}$. Moreover,
${\mathbb W}=TW$ and $T$ is a normal subgroup of ${\mathbb W}$. 
\end{notation}

\begin{definition}\label{abc1}
Let $\Phi$ and $({\mathbb A},{\mathbb W})$ be as in \ref{abc61} and
let $H_\alpha$ for all $\alpha\in\Phi$ be as in \ref{abc60}.
Let $X$ be a set and 
let ${\mathcal A}$ be a family of injections
of ${\mathbb A}$ into $X$. The elements of ${\mathcal A}$ 
will be called {\it charts} and the images of charts
will be called {\it apartments}. Sets of the form $f(H_\alpha)$
for some chart $f$ and some $\alpha\in\Phi$
will be called {\it roots} of $(X,{\mathcal A})$
and sets of the form $f(S)$ for some chart $f$
and some sector (respectively, face) of $\Phi$ (as defined in \ref{abc60})
will be called {\it sectors} (respectively, {\it faces}) of 
$(X,{\mathcal A})$. The pair $(X,{\mathcal A})$ is
a {\it non-discrete Euclidean building of type $\Phi$}
if the following six axioms hold:
\begin{thmenum}
\item[{\bf A1}:] If $f\in{\mathcal A}$ and $w\in{\mathbb W}$, then $f\circ w\in{\mathcal A}$.
\item[{\bf A2}:] If $f,f'\in{\mathcal A}$, then the set 
$$M:=\{v\in{\mathbb A}\mid f(v)\in f'({\mathbb A})\}$$
is closed and convex and there exists $w\in{\mathbb W}$ such that the maps $f$ and $f'\circ w$
coincide on $M$.
\item[{\bf A3}:] Every two points of $X$ are contained in a
common apartment.
\item[{\bf A4}:] If $S$ and $S'$ are sectors of $(X,{\mathcal A})$, then
there exists an apartment containing sectors $S_1$ and $S_1'$ such 
that $S_1\subset S$ and $S_1'\subset S'$.
\item[{\bf A5}:] Three apartments which intersect pairwise in roots have a 
non-empty intersection.
\item[{\bf A6}:] There is a metric $d$ on $X$ 
such that for all $v,z\in{\mathbb A}$ and all $f\in{\mathcal A}$,
$d(f(v),f(z))$ equals the Euclidean distance between $v$ and $z$.
\end{thmenum}
If $(X,{\mathcal A})$ is a non-discrete Euclidean building of type $\Phi$,
we call the pair $({\mathbb A},{\mathbb W})$, which is uniquely determined
by $\Phi$, its {\it model} and we define the {\it dimension} of $(X,{\mathcal A})$
to be the dimension of ${\mathbb A}$. By (A3), the metric $d$ in (A6) is 
unique.
\end{definition}

Various equivalent definitions of a non-discrete Euclidean
building (and a proof of their equivalence) can be found
in Theorem~1.21 of \cite{parreau}. See also 
Proposition~2.21 of \cite{parreau}. 

\begin{definition}\label{abc3}
Let $(X,{\mathcal A})$ and $(X',{\mathcal A}')$ be two non-discrete Euclidean
buildings having the same type $\Phi$. An {\it isomorphism} $\psi$ from 
$(X,{\mathcal A})$ to $(X',{\mathcal A}')$ is a bijection
from $X$ to $X'$ such that 
$${\mathcal A}'=\{\psi\circ f\mid f\in {\mathcal A}\}.$$
We denote by ${\rm Aut}(X,{\mathcal A})$ the group of all
isomorphisms from $(X,{\mathcal A})$ to itself.
\end{definition}

\begin{definition}\label{abc5}
Let $(X,{\mathcal A})$ be a non-discrete Euclidean building of type $\Phi$,
let $({\mathbb A},{\mathbb W})$ be the model of $(X,{\mathcal A})$,
let $\tau$ be an isometry of ${\mathbb A}$ normalizing the group ${\mathbb W}$ and let 
$${\mathcal A}_\tau=\{f\circ\tau\mid f\in {\mathcal A}\}.$$
Then $(X,{\mathcal A}_\tau)$ is a non-discrete Euclidean building of type $\Phi$ with the same
underlying metric structure as $(X,{\mathcal A})$.
Now suppose in addition that $\tau^2=1$. 
Then an isomorphism from $(X,{\mathcal A})$ to $(X,{\mathcal A}_\tau)$ 
is automatically an isomorphism from $(X,{\mathcal A}_\tau)$ to $(X,{\mathcal A})$. 
We call such an isomorphism a {\it $\tau$-automorphism} of $(X,{\mathcal A})$.
\end{definition}

\begin{definition}\label{abc67}
We say that two non-discrete Euclidean buildings
$(X,{\mathcal A})$ and $(X',{\mathcal A}')$ are {\it equivalent} (or one is a 
{\it dilation} of the other) if 
they have the same type $\Phi$ and therefore the same model $({\mathbb A},{\mathbb W})$,
$X=X'$ and 
$${\mathcal A}'=\{f\circ\delta\mid f\in{\mathcal A}\}$$
for some dilation $\delta$ of ${\mathbb A}$ (where {\it dilation} means
multiplication by a non-zero constant). 
Thus equivalent non-discrete Euclidean buildings
have the same underlying metric structure up to a constant positive factor.
\end{definition}

\begin{definition}\label{abc73}
Let $(X,{\mathcal A})$ be a non-discrete Euclidean 
building with model $({\mathbb A},{\mathbb W})$
and let $x,x'\in X$. By (A3), there exists a chart $f$ and points $x_1,x_1'\in{\mathbb A}$
such that $f(x_1)=x$ and $f(x_1')=x'$. The {\it interval} $[x,x']$ is the image
under $f$ of the interval $[x_1,x_1']$. By (A2), the interval
$[x,x']$ is independent of the chart $f$. 
By the CAT(0) property (proved, for example, in Proposition~2.10 of [5]),
the interval $[x,x']$ is, in fact, the unique
geodesic connecting $x$ to $x'$.
\end{definition}

\begin{notation}\label{abc57}
Let $(X,{\mathcal A})$ be a non-discrete Euclidean building of type $\Phi$. 
Two faces $F$ and $F'$ of $(X,{\mathcal A})$ (as defined in \ref{abc1}) 
are called {\it parallel} if they are at finite Hausdorff distance, i.e.~if both
$$\sup_{x'\in F'}d(x',F)$$
and
$$\sup_{x\in F}d(x,F')$$ 
are finite. By (A6), this is an equivalence relation on faces. 
For each face $F$ of $(X,{\mathcal A})$,
we denote by $F^\infty$ the corresponding parallel class and 
for each apartment $A$ of $(X,{\mathcal A})$, 
we denote by $A^\infty$ the set of parallel classes of faces
containing a face of $A$. If
$b$ and $b'$ are two parallel classes, we set $b\le b'$ whenever
$$\sup_{x'\in F'}d(x',F)<\infty.$$
for all $F\in b$ and all $F'\in b'$.
This notion makes the set of parallel classes of faces into a simplicial complex. 
We denote this simplicial complex by $(X,{\mathcal A})^\infty$.
By Proposition~1 in \cite{tits-como} (see also Property~1.7 in \cite{parreau}), 
$(X,{\mathcal A})^\infty$
is, in fact, a spherical building of type $\Phi$ (as defined in \ref{abc60})
and the map $A\mapsto A^\infty$
is a bijection from the set of apartments of $(X,{\mathcal A})$ to
the set of apartments of $(X,{\mathcal A})^\infty$.
The building $(X,{\mathcal A})^\infty$
is called the {\it building at infinity of $(X,{\mathcal A})$}. 
It is irreducible (since it is of type $\Phi$) and its rank
is the same as the dimension
of $(X,{\mathcal A})$.
\end{notation}

\begin{notation}\label{abc64}
Let $(X,{\mathcal A})$ be a non-discrete Euclidean building of type $\Phi$ with model
$({\mathbb A},{\mathbb W})$, let $A$ be
an apartment of $(X,{\mathcal A})$, let $x$ be a point of $A$ and 
let $\Sigma=A^\infty$. Let $\Sigma(\Phi)$ and $H_\alpha$ (for each $\alpha\in\Phi$) 
be as in \ref{abc60} and suppose that $\Phi$ is identified with the set of roots
of $\Sigma$ via an isomorphism from $\Sigma(\Phi)$ to $\Sigma$ as described in 
\ref{abc70}. Then there exists a unique chart $f$ such that the following hold:
\begin{thmenum}
\item $f({\mathbb A})=A$.
\item $f(0)=x$.
\item For each $\alpha\in\Phi$, a sector $S$ of $\Phi$ is contained in 
the half-space $H_\alpha$ if and only if the chamber $f(S)^\infty$ of $\Sigma$
is contained in the root $\alpha$ of $\Sigma$. 
\end{thmenum}
Let $f_{A,x}$ denote the chart $f$. 
\end{notation}

\begin{remark}\label{abc58}
Let $\Phi$ and $({\mathbb A},{\mathbb W})$ be as in \ref{abc61}.
Then $({\mathbb A},{\mathbb W})$ is a non-discrete Euclidean building of type
$\Phi$ with just one apartment. It thus has a building at infinity
whose faces are of the form $F^\infty$ for some face $F$ of $\Phi$.
\end{remark}

\begin{notation}\label{abc59}
Let $(X,{\mathcal A})$ be a non-discrete Euclidean building of type $\Phi$ 
and let $({\mathbb A},{\mathbb W})$ be its model.
Let $F$ be a face of $(X,{\mathcal A})$. Then $F$ is the image
of a face of $\Phi$ (as defined in \ref{abc60}) under some chart
$f$. The {\it vertex} of $F$ is the image of the origin of ${\mathbb A}$
under $f$; by (A2), this notion is independent of the choice
of $f$. Now let $x$ be a point of $X$.
We declare two points $y$ and $z$ of 
$X\backslash\{x\}$ to be {\it equivalent at $x$}
if 
$$[x,y]\cap B=[x,z]\cap B$$ 
for some open ball $B$ centered at $x$, where $[x,y]$ and $[x,z]$ are intervals
as defined in \ref{abc73}.
This is an equivalence relation on $X\backslash\{x\}$. For each 
$y\in X\backslash\{x\}$, let $g_x(y)$ denote its equivalence class.
We declare two faces $F$ and $F_1$ with vertex $x$ to be equivalent if $g_x(F)=g_x(F_1)$.
By (A2), this holds if and only if $F\cap B=F_1\cap B$ for
some open ball $B$ centered at $x$.
The equivalence class of a face $F$ is called the {\it germ of $F$} and
a {\it germ at $x$} is the germ of a face with vertex $x$. 
The set of germs at $x$ form a simplicial complex on the set $g_x(X\backslash\{x\})$.
As observed in Section 1.3 of \cite{parreau}, this simplicial
complex is a building of type $\Phi$ whose apartments are the sets
$g_x(A)$ for all apartments $A$ of $(X,{\mathcal A})$ containing $x$.
We call this building the 
{\it residue of $(\Delta,{\mathcal A})$ at $x$}
and denote it by $(\Delta,{\mathcal A})_x$. (The residue at $x$
is called the {\it building of directions} at $x$ in \cite{parreau}.)
\end{notation}

The residues of a non-discrete Euclidean building $(X,{\mathcal A})$ 
are not necessarily thick, and the residues at different points of $X$
are not necessarily isomorphic to each other. They are
all, however, of type $\Phi$.

\begin{notation}\label{abc63}
Let $(X,{\mathcal A})$ be a non-discrete Euclidean building, let $x\in X$,
let the residue $(X,{\mathcal A})_x$ of $(X,{\mathcal A})$ at $x$ be as in \ref{abc59} and
let $f$ be an element of ${\mathcal A}$ mapping the origin $0$ to $x$.
Then there exists a unique type-preserving isomorphism from the Coxeter complex
$\Sigma(\Phi)$ (as defined in \ref{abc60}) to an apartment of
the residue $(X,{\mathcal A})_x$ which maps each face $F$ 
of $\Phi$ to the germ at $x$ containing $f(F)$. We denote this isomorphism by 
$f_*$.
\end{notation}

For the rest of this section, we examine the special case that the 
building at infinity of a non-discrete Euclidean building is Moufang.

\begin{theorem}\label{abc20}
Let $(X,{\mathcal A})$ be a non-discrete Euclidean building of type $\Phi$
with model $({\mathbb A},{\mathbb W})$
such that the building at infinity $\Delta:=(X,{\mathcal A})^\infty$
satisfies the Moufang property as defined in \ref{abc2}, let
$\ell$ denote the dimension of $(X,{\mathcal A})$, let $A$ be an 
apartment of $(X,{\mathcal A})$, let $\Sigma=A^\infty$, let $x$ be a point of $A$,
let $\Sigma(\Phi)$ be as in \ref{abc60} and let 
$$H_{\alpha,k}=\{v\in{\mathbb A}\mid v\cdot\alpha\ge k\}$$
for all $\alpha\in\Phi$ and all $k\in{\mathbb R}$.
Let $\Phi$ be identified with the set of roots of $\Sigma$ via 
an isomorphism from $\Sigma(\Phi)$ to $\Sigma$ as described in
\ref{abc70} and let $f:=f_{A,x}$ be as in \ref{abc64}. 
If $\ell\ge2$, then the following hold:
\begin{thmenum}
\item For every root $\alpha\in\Phi$, there exists a canonical injection
from the root group $U_\alpha$ into ${\rm Aut}(X,{\mathcal A})$
such that for each $u\in U_\alpha$, its image under this injection 
induces $u$ on $\Delta$.
\item For every $\alpha\in\Phi$, there exists a map
$\varphi_\alpha$ from the root group $U_\alpha^*$ of $\Delta$ to ${\mathbb R}$ such that 
$${\rm Fix}_A(u)=A\cap A^u=f(H_{\alpha,\varphi_\alpha(u)})$$
for each $u\in U_\alpha^*$.
\end{thmenum}
\end{theorem}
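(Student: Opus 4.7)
My plan is to construct the lift $\tilde{u} \in \mathrm{Aut}(X,\mathcal{A})$ of each nontrivial $u \in U_\alpha$ apartment by apartment, using the bijection $A' \mapsto (A')^\infty$ between apartments of $(X,\mathcal{A})$ and apartments of $\Delta$ from \ref{abc57}. Given an apartment $A'$ of $(X,\mathcal{A})$ with $\alpha \in (A')^\infty$, the root group element $u$ sends $(A')^\infty$ to another apartment $\Sigma'$ of $\Delta$ also containing $\alpha$ (using \ref{abc8} together with the fact that $u$ fixes the chambers in $\alpha$). I would first show that among the apartments of $(X,\mathcal{A})$ having boundary $\Sigma'$, there is a unique apartment $A''$ that agrees with $A'$ on a half-apartment bounded by a wall parallel to $\partial H_\alpha$. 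The local action $A' \to A''$ is then forced: it is the identity on the common half-apartment, and on the complement it is prescribed by the unique element of $\mathbb{W}$ that realizes $u$ on $(A')^\infty$ relative to the two charts. For apartments $A'$ whose boundary does not contain $\alpha$, I would use axiom (A4) of \ref{abc1} to cover $A'$ by sectors lying in apartments where the construction is already defined, and transport the action. Consistency across overlaps uses (A2) and (A5). Injectivity of the map $u \mapsto \tilde{u}$ is automatic, since $\tilde{u}$ induces $u$ on $\Delta$ and $U_\alpha$ embeds into $\mathrm{Aut}(\Delta)$.

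For part (ii), once $\tilde{u}$ is available, the fixed set $\mathrm{Fix}_A(u)$ equals the intersection $A \cap A^u$, where $A^u = \tilde{u}(A)$. Any point fixed by $\tilde{u}$ in $A$ automatically lies in both $A$ and $A^u$; conversely, by axiom (A2) the intersection $A \cap A^u$ is a closed convex set on which the two charts of $A$ and $A^u$ are related by a single element of $\mathbb{W}$, and that element is forced to be the identity because $\tilde{u}$ induces $u$ at infinity which fixes every chamber in $\alpha$. By construction, $A^u$ is obtained from $A$ by a folding along a wall parallel to $\partial H_\alpha$, so $A \cap A^u$ is precisely a half-apartment of the form $f(H_{\alpha,k})$ for some $k \in \mathbb{R}$. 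Setting $\varphi_\alpha(u) := k$ yields the stated formula.

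The principal obstacle is the well-definedness and compatibility of the apartment-wise construction in (i): one must show that the candidate local actions on different apartments agree on their intersections. This requires combining the combinatorial rigidity of the Moufang spherical building $\Delta$ (via \ref{abc13} and \ref{abc8}) with the intersection axioms (A2) and (A5). A secondary technical point is the extension to apartments $A'$ whose boundary at infinity does not contain $\alpha$, where one must verify independence of the auxiliary apartments furnished by (A4). Once these steps are in place, part (ii) follows from tracing the construction through on the specific apartment $A$.
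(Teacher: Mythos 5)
The paper itself gives no proof of \ref{abc20}---it simply cites Sections~10--11 of \cite{tits-como}---so your argument cannot be compared with a proof the paper supplies. Instead it must stand on its own, and as written it has a genuine gap at its central step.

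The whole construction of the lift $\tilde u$ rests on the assertion that, for an apartment $A'$ of $(X,\mathcal{A})$ with $\alpha\subset (A')^\infty$ and $\Sigma':=u\big((A')^\infty\big)$, the unique apartment $A''$ of $(X,\mathcal{A})$ with $(A'')^\infty=\Sigma'$ meets $A'$ precisely in a half-apartment bounded by a wall parallel to $\partial H_\alpha$. You write ``I would first show'' that this holds, but you never show it, and it is not a corollary of the quoted axioms. Axioms (A2) and (A5) only give you that $A'\cap A''$ is closed and convex and that triple intersections behave well; (A4) only lets you find subsectors in a common apartment, which tells you the two apartments are eventually asymptotic in the direction of a chamber of $\alpha$, not that the intersection is a half-apartment cut along the exact wall you need. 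Establishing this intersection property---and in particular that the wall is parallel to $\partial H_\alpha$, so that the ``fold'' normalizes the chart data and the resulting $\tilde u$ is compatible with the atlas---is precisely where the Moufang property and the CAT(0) geometry of $(X,\mathcal{A})$ enter, and it is the substance of the theorem. Without it the ``local action $A'\to A''$'' you describe is not defined.

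A second, related gap is the compatibility across overlapping apartments, which you explicitly label ``the principal obstacle'' but then dispose of with a single sentence saying it ``requires combining'' the rigidity of $\Delta$ with (A2) and (A5). That sentence is a statement of the problem, not a solution; nothing in the proposal actually verifies that the candidate local isometries agree on intersections or on the sectors supplied by (A4) for apartments whose boundary does not contain $\alpha$. Since part (ii) is extracted from the construction of $\tilde u$ in part (i), the unproved intersection claim propagates: the assertion that $A\cap A^u=f(H_{\alpha,k})$ is exactly the geometric fact that is at stake, and cannot be invoked as ``by construction'' before the construction is actually carried through. In short, the proposal identifies the right objects and the right obstructions, but it leaves the two points that make this theorem nontrivial entirely unproved.
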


\begin{proof}
See Sections~10 and~11 of \cite{tits-como}.
\end{proof}

Note that under the hypotheses of \ref{abc20}, we 
always identify each root group $U_\alpha$ with its image under the 
injection in \ref{abc20}.i. Thus, in particular, the $u$ in \ref{abc20}.ii
is really the canonical image in ${\rm Aut}(X,{\mathcal A})$ of an element 
$u\in U_\alpha$.

With the following definition we describe those non-discrete Euclidean buildings which 
were studied in \cite{bruh-tits}.

\begin{definition}\label{abc21}
A {\it Bruhat-Tits space} is a non-discrete Euclidean building
$(X,{\mathcal A})$ such that the following hold:
\begin{thmenum}
\item The spherical building $\Delta:=(X,{\mathcal A})^\infty$ is 
Moufang (in the sense of \ref{abc2} or \ref{abc10}).
\item The conclusions of \ref{abc20} hold.
\end{thmenum}
Let $(X,{\mathcal A})$ be a non-discrete Euclidean building satisfying (i) 
and let $\ell$ denote the dimension of $(X,{\mathcal A})$.
If $\ell=1$, saying that $\Delta$ is Moufang 
means we have a particular Moufang structure on $\Delta$ in mind,
and (ii) is to be interpreted with respect to this Moufang structure.
If $\ell\ge2$, then (ii) holds automatically. 
\end{definition}

\noindent
A non-discrete Euclidean building of rank $\ell\ge3$ always satisfies \ref{abc21}.i.
(This is proved, for example, \cite[40.3]{TW}.)
Thus in dimension three or higher, ``non-discrete Euclidean building'' and 
``Bruhat-Tits space'' are the same thing. 

\begin{convention}\label{abc24}
Let $\Delta$ be a Moufang building of rank one in the sense of \ref{abc10}.
When we say that ``$\Delta$ is the building at infinity of the
Bruhat-Tits space $(X,{\mathcal A})$,''
we mean that the conclusions of \ref{abc20} hold with respect to
the particular Moufang structure on $\Delta$ we have in mind.
\end{convention}

The following results of Bruhat-Tits and Tits are fundamental.

\begin{theorem}\label{abc23}
Let $(X,{\mathcal A})$, $A$, $x$, $\Delta$, $\Sigma$ and $\varphi_\alpha$ for $\alpha\in\Phi$
be as in \ref{abc20}. Then 
$$\varphi:=\{\varphi_\alpha\mid\alpha\in\Phi\}$$ 
is a valuation of the root datum of $\Delta$ based at $\Sigma$.
Moreover, the valuation $\varphi$ is independent 
of the choice of the point $x$ in $A$ up to equipollence (as defined in \ref{abc30}).
\end{theorem}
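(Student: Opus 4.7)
The plan is to verify the three axioms (V1), (V2), (V3) of Definition \ref{abc14} directly from the geometric characterization of $\varphi_\alpha$ supplied by Theorem \ref{abc20}.ii, namely that $\varphi_\alpha(u)$ is the unique real number with ${\rm Fix}_A(u) = f(H_{\alpha, \varphi_\alpha(u)})$, and then to read off equipollence by comparing the charts $f_{A,x}$ and $f_{A,x'}$ for two base points.

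Axiom (V1) is nearly immediate: if $u_1, u_2 \in U_\alpha$ with $\varphi_\alpha(u_i) \geq k$, then each $u_i$ fixes $f(H_{\alpha,k})$ pointwise, hence so does $u_1 u_2^{-1}$, and Theorem \ref{abc20}.ii applied to $u_1 u_2^{-1}$ (when nontrivial) forces $\varphi_\alpha(u_1 u_2^{-1}) \geq k$, with the convention $\varphi_\alpha(1) = \infty$ covering the trivial case. For (V3) I would exploit the fact that $m_\Sigma(u)$ induces $s_\alpha$ on $A^\infty$, so by (A2) and the uniqueness of affine extensions it acts on $A$ as an affine reflection whose linear part is $s_\alpha$. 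For any $g \in U_\beta^*$, the fixed half-apartment $f(H_{\beta, \varphi_\beta(g)})$ of $g$ is then carried by $m_\Sigma(u)$ to a half-apartment with direction $s_\alpha(\beta)$ at infinity, so the shift $t = \varphi_{s_\alpha(\beta)}(g^{m_\Sigma(u)}) - \varphi_\beta(g)$ depends only on the translation part of $m_\Sigma(u)$ and not on $g$. In the special case $\alpha = \beta$, the element $u$ itself fixes the wall $\{v : v \cdot \alpha = \varphi_\alpha(u)\}$ pointwise, so this wall must be the mirror of the reflection $m_\Sigma(u)$ acts as on $A$; pinning down the translation part in this way yields $t = -2 \varphi_\alpha(u)$.

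The main obstacle is (V2), the refined commutator relation. Theorem \ref{abc17} supplies the coarse spherical statement $[U_\alpha, U_\beta] \subset U_{\gamma_1} \cdots U_{\gamma_s}$; the task is to refine it with the filtered indices $p_i k + q_i l$. The geometric lever is the inclusion
$$H_{\alpha, k} \cap H_{\beta, l} \subset H_{\gamma_i, p_i k + q_i l}$$
for every $i$, which follows from the positivity of the coefficients $p_i, q_i$ in the identity $\gamma_i / |\gamma_i| = p_i \alpha / |\alpha| + q_i \beta / |\beta|$ of \ref{abc16b} (modulo the normalization absorbed into the conventions on the indexing). Given $u \in U_{\alpha,k}$ and $v \in U_{\beta,l}$, the commutator $[u,v]$ therefore fixes each $f(H_{\gamma_i, p_i k + q_i l})$ pointwise. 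Writing $[u,v] = u_1 u_2 \cdots u_s$ with $u_i \in U_{\gamma_i}$ and proceeding by induction on $i$, I would isolate each factor using the sharp transitivity in \ref{abc8} and the strict angular ordering of the $\gamma_i$ to conclude $u_i \in U_{\gamma_i, p_i k + q_i l}$. The delicate point is the isolation step: the partial products $u_1 \cdots u_{i-1}$ and $u_{i+1} \cdots u_s$ must be shown to act trivially on a witness wall for $U_{\gamma_i, p_i k + q_i l}$, for which one exploits that among the $\gamma_j$ only $\gamma_i$ has the direction needed to control that wall. The reverse inclusion yielding equality is obtained by iterating the argument with $\alpha$ and $\beta$ replaced by other pairs drawn from the interval.

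For the equipollence statement, the two charts $f_{A,x}$ and $f_{A,x'}$ differ by translation by the vector $v_0 := f_{A,x}^{-1}(x')$ in $\mathbb{A}$. Since translation by $-v_0$ carries $H_{\alpha, k}$ to $H_{\alpha, k - \alpha \cdot v_0}$, the valuations $\varphi$ and $\psi$ based at $x$ and $x'$ satisfy $\varphi_\alpha(u) - \psi_\alpha(u) = \alpha \cdot v_0$ for every $\alpha \in \Phi$ and every $u \in U_\alpha^*$, which is exactly equipollence as defined in \ref{abc30}.
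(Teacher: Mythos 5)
The paper does not prove this statement; it simply cites the first part of Theorem~3 of \cite{tits-como}. You are therefore reconstructing Tits' argument, which is a legitimate exercise, and the parts you sketch for (V1) and for equipollence are sound: both reduce cleanly to the nested-half-apartment description supplied by \ref{abc20}.ii and to the affine relation between the charts $f_{A,x}$ and $f_{A,x'}$.

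There are two genuine gaps. The smaller one is in the $\alpha=\beta$ case of (V3). From the fact that $u$ fixes the wall $\{v : v\cdot\alpha = \varphi_\alpha(u)\}$ pointwise, you conclude that this wall is the mirror of the affine reflection induced by $m_\Sigma(u)$ on $A$. That inference is not automatic: $m_\Sigma(u)=\kappa(u)\,u\,\lambda(u)$ with $\kappa(u),\lambda(u)\in U_{-\alpha}^*$, and one also needs to know that $\kappa(u)$ and $\lambda(u)$ fix that same wall, i.e.\ that $\varphi_{-\alpha}(\kappa(u))=\varphi_{-\alpha}(\lambda(u))=-\varphi_\alpha(u)$. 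That identity is essentially part of what (V3) is meant to deliver, so some independent route to it is required to avoid circularity, for instance via $m_\Sigma(\kappa(u))=m_\Sigma(u)$ from \ref{abc8a}.iii together with a fixed-set computation.

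The serious gap is the one you flag yourself in (V2). The inclusion $H_{\alpha,k}\cap H_{\beta,l}\subset H_{\gamma_i, p_ik+q_il}$ shows only that the full product $[u,v]=u_1\cdots u_s$ fixes each half-apartment $f(H_{\gamma_i, p_ik+q_il})$; it gives no bound on the valuation of any individual factor $u_i$. Your ``isolation step'' is where the entire content of (V2) lives, and it cannot be dispatched by pointing at the angular ordering of the $\gamma_i$: once $s\geq 2$ the partial products $u_1\cdots u_{i-1}$ and $u_{i+1}\cdots u_s$ do not lie in a single root group, so their effect on a putative witness wall is not read off from \ref{abc20}.ii, and the groups $U_{\gamma_j}$ need not commute with one another, which makes the bookkeeping genuinely delicate. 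Moreover (V2) asserts an \emph{equality}, and your proposal only asserts that the reverse inclusion follows by ``iterating the argument with other pairs,'' which is not a proof. Until (V2) is carried out in full, the proposal does not establish the theorem.
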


\begin{proof}
This is the first part of Theorem~3 in \cite{tits-como}.
\end{proof}

\begin{theorem}\label{abc31}
Let $\Delta$ be an irreducible spherical building of type $\Phi$ satisfying the Moufang condition,
let $\Sigma$ be an apartment of $\Delta$ (to which \ref{abc70} is applied)
and let $\varphi$ be a valuation of the 
root datum of $\Delta$ based at $\Sigma$ as defined in \ref{abc14}. Then there exists a 
Bruhat-Tits space $(X,{\mathcal A})$ of type $\Phi$, an apartment $A$ of $(X,{\mathcal A})$
and a point $x_A$ of $A$ such that the following hold:
\begin{thmenum}
\item $\Delta$ is the building at infinity of $(X,{\mathcal A})^\infty$ (in the sense
of \ref{abc24} if the rank of $\Delta$ is one)
and $\Sigma=A^\infty$.
\item For each $\alpha\in\Phi$, $\varphi_\alpha$ is the map which appears in \ref{abc20}.ii
when \ref{abc20} is applied to the triple $(X,{\mathcal A})$, $A$ and $x_A$.
\end{thmenum}
If $(X',{\mathcal A'})$, $A'$ and $x'_A$ is a second triple with these properties,
then there exists an isomorphism from $(X,{\mathcal A})$ to $(X',{\mathcal A}')$
mapping $A$ to $A'$ and $x_A$ to $x'_A$. 
\end{theorem}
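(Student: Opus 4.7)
The plan is to construct the Bruhat-Tits space $(X,\mathcal{A})$ explicitly from the valuated root datum, following the approach of Bruhat--Tits \cite{bruh-tits} and Tits \cite{tits-como}. Let $G^\dagger$ denote the subgroup of $\mathrm{Aut}(\Delta)$ generated by all root groups, and let $N$ be the stabilizer of $\Sigma$ in $G^\dagger$. By \ref{abc8a}, \ref{abc40} and axiom (V3), $N$ acts on $\mathbb{A}$ (through its quotient by the pointwise stabilizer of $\Sigma$) as the affine reflection group $\mathbb{W}$ of \ref{abc61}. Axiom (V1) gives a filtration $U_{\alpha,k}$ of each root group; for every $x\in\mathbb{A}$ I would let $P_x$ be the subgroup of $G^\dagger$ generated by the stabilizer of $x$ in $N$ together with the subgroups $U_{\alpha,-\alpha\cdot x}$ for all $\alpha\in\Phi$ (signs chosen to match \ref{abc20}(ii)). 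Then put
$$X:=(G^\dagger\times\mathbb{A})/{\sim},\qquad (g,x)\sim(h,y) \iff \exists\,n\in N \text{ with } nx=y \text{ and } g^{-1}hn\in P_x,$$
take $\mathcal{A}$ to consist of the charts $f_g:x\mapsto[g,x]$, and set $A:=f_1(\mathbb{A})$ and $x_A:=[1,0]$.

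The verification of axioms A1--A6 of \ref{abc1} is the main technical task. Axiom A1 is immediate from the definition, A3 follows from the transitivity statement \ref{abc13}, and the metric of A6 is inherited from the Euclidean metric on $\mathbb{A}$, well-definedness across overlapping charts being guaranteed by (V3) together with \ref{abc40}. The crucial axiom is A2: if two charts $f_g$ and $f_h$ share a point, one would write $g^{-1}h$ (after multiplying by a suitable $n\in N$) as a product of root-group elements via \ref{abc17}, and then apply the commutator formula (V2) to show that the locus on which $f_g=f_h$ is precisely the intersection of the half-spaces $H_{\alpha,\varphi_\alpha(u_\alpha)}$ of \ref{abc20}(ii), which is closed and convex. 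Axioms A4 and A5 then reduce to A2 combined with the transitivity of root groups on families of apartments. This verification of A2 is the principal obstacle, and is carried out in detail in Sections~9--11 of \cite{tits-como} (see also Section~7 of \cite{bruh-tits}).

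With $(X,\mathcal{A})$ in hand, \ref{abc57} makes $(X,\mathcal{A})^\infty$ a spherical building of type $\Phi$; tracking the induced $G^\dagger$-action on parallel classes of faces shows that $\Delta$ and $(X,\mathcal{A})^\infty$ have the same root datum, which identifies them via \cite[40.17]{TW} when the rank is at least two, and directly via the Moufang structure of \ref{abc10} when the rank is one. This gives part (i). Part (ii) is then immediate from the construction of $P_x$: an element $u\in U_\alpha^*$ fixes $f_1(y)$ if and only if $u\in P_y$, which unpacks to $\mathrm{Fix}_A(u)=f_1(H_{\alpha,\varphi_\alpha(u)})$.

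For uniqueness, suppose $(X',\mathcal{A}',A',x'_A)$ is a second triple with the same properties. By \ref{abc20}(i) the group $G^\dagger$ embeds into both $\mathrm{Aut}(X,\mathcal{A})$ and $\mathrm{Aut}(X',\mathcal{A}')$, and by property (ii) together with \ref{abc23} the stabilizer of $x'_A$ in $G^\dagger$ (acting on $X'$) coincides with the subgroup $P_0$ constructed above. Sending $x_A\mapsto x'_A$ and extending $G^\dagger$-equivariantly thus yields a well-defined injection on the $G^\dagger$-orbit of $x_A$, which extends uniquely to a chart-preserving map on all of $X$ via A3 and \ref{abc13}; compatibility on overlapping apartments follows once more from the coincidence of the filtrations $U_{\alpha,k}$.
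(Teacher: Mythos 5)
Your proposal is a correct sketch of the Bruhat--Tits construction of a Euclidean building from a valuated root datum, which is precisely the content of the references the paper's one-line proof cites (Section~7.4 of \cite{bruh-tits} for existence, Proposition~6 of \cite{tits-como} for uniqueness). You explicitly defer the central technical verification---axiom (A2), the uniqueness of the transition element in $\mathbb{W}$ on chart overlaps---to those same sources, so this is essentially the same approach as the paper.
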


\begin{proof}
Existence is proved in Section~7.4 of \cite{bruh-tits} and uniqueness in Proposition~6 of
\cite{tits-como}.
\end{proof}

\section{The Ree and Suzuki groups}
In this section we collect a few well known facts about the Ree and Suzuki 
groups. All of these results are contained more or less explicitly in \cite{tits-ree}
and \cite{tits-oct}.

There are three families of Ree and Suzuki groups. 
Beginning in \ref{abc33} and for the rest of this paper, we will refer to 
three cases which we will call ``case ${\sf B}$,'' ``case ${\sf F}$''
and ``case ${\sf G}$.'' 

\begin{notation}\label{abc33}
Let $K$ be a field of characteristic positive $p$ 
and suppose that $\theta$ is a {\it Tits endomorphism} 
of $K$. This means that $\theta$ is an endomorphism of $K$ such
that $\theta^2$ is the Frobenius map
$x\mapsto x^p$. Thus, in particular, 
$F:=K^\theta$ is a subfield of $K$ isomorphic to $K$ which contains
the subfield $K^p$. Suppose, too, that $p=2$
in cases ${\sf B}$ and ${\sf F}$ and $p=3$ in case ${\sf G}$. 
In case ${\sf B}$ let $L$ be an additive subgroup of $K$
containing $F$ such that $L\cdot F\subset L$ (so 
$L$ is a vector space over $F$) and $K=\langle L\rangle$
(where $\langle L\rangle$ denotes the subring of $K$ generated by $L$) and 
let $\Lambda$ denote the indifferent set $(K,L,L^\theta)$ as defined in
\cite[10.1]{TW}. 
In case ${\sf F}$ let
$\Lambda$ denote the composition algebra $(K,F)$ as defined in \cite[30.17]{weiss}.
In case ${\sf G}$ let $\Lambda$ denote 
the hexagonal system $(K/F)^\circ$ as defined
in \cite[15.20]{TW}. 
Let $\Delta$ denote the building ${\sf B}_2^{\mathcal D}(\Lambda)$
in case ${\sf B}$, the building ${\sf F}_4(\Lambda)$ in case ${\sf F}$ and
the building ${\sf G}_2(\Lambda)$
in case ${\sf G}$ in the notation described in \cite[30.17]{weiss}. Thus
${\sf B}_2^{\mathcal D}(\Lambda)$ is the Moufang quadrangle
called ${\mathcal Q}_{\mathcal D}(\Lambda)$ in \cite[16.4]{TW} 
and ${\sf G}_2(\Lambda)$ the Moufang hexagon called ${\mathcal H}(\Lambda)$ in 
\cite[16.8]{TW}. The type $\Phi$ of the building $\Delta$ 
is ${\sf B}_2$ in case ${\sf B}$, ${\sf F}_4$ in 
case ${\sf F}$ and ${\sf G}_2$ in case ${\sf G}$. (Alternatively, we can define
$\Delta$ to be the unique building of type $\Phi$ whose root datum 
is as described in \ref{abc6} below.)
\end{notation}

The building $\Delta$ in \ref{abc33} is split if and only if $K$ is perfect;
if $K$ is not perfect, $\Delta$ is simply a building of mixed type
(as defined, for example, in \cite[30.24]{weiss}). 

The following element $\tau$ plays a central role from now on.

\begin{notation}\label{abc66}
Let $\Phi$ be as in \ref{abc33}, let ${\mathbb A}$ be the ambient
space of $\Phi$ and let $S$ be a sector of $\Phi$.
There is a unique non-trivial element of ${\rm Aut}(\Phi)$ (as defined in \ref{abc0})
fixing $S$. This automorphism induces a non-type-preserving automorphism
of the Coxeter complex $\Sigma(\Phi)$ and has order two. We denote it by $\tau$. 
\end{notation}

\begin{theorem}\label{abc6}
Let $\Delta$, $\Phi$, $K$, $L$, etc.~be as in \ref{abc33}, let
$\tau$ and $S$ be as in \ref{abc66}, let $\Sigma$ be an apartment of $\Delta$, 
let $C$ be a chamber of $\Sigma$, let $\psi$ be the unique special isomorphism
from $\Sigma(\Phi)$ to $\Sigma$ mapping $S$ to $C$ and let $\Phi$ be identified
with the set of roots of $\Sigma$ via $\psi$ as indicated in \ref{abc70}.
Then for each $\alpha\in\Phi$, there exists an isomorphism $x_\alpha$ from the
additive group of $L$ in case ${\sf B}$, respectively, the additive group of $K$
in cases ${\sf F}$ and ${\sf G}$,
to the root group $U_\alpha$ of $\Delta$ such that the 
collection $(x_\alpha)_{\alpha\in\Phi}$ has the following properties:
\begin{thmenum}
\item There exists a unique automorphism $\rho$ of 
$\Delta$ mapping the pair $(C,\Sigma)$ to itself such that 
$$x_\alpha(t)^\rho=x_{\tau(\alpha)}(t)$$
for all $\alpha\in\Phi$ and all $t\in K$ (or all $t\in L$).
\item In cases ${\sf B}$ and ${\sf F}$, 
$$[U_\alpha,U_\beta]=1$$
whenever $\angle(\alpha,\beta)\le90^\circ$,
$$[x_\alpha(s),x_\beta(t)]=x_{\alpha+\beta}(st)$$
for all $s,t\in K$ whenever $\angle(\alpha,\beta)=120^\circ$ and 
$$[x_\alpha(s),x_\beta(t)]=x_{\sqrt{2}\alpha+\beta}(s^\theta t)
x_{\alpha+\sqrt{2}\beta}(st^\theta)$$
for all $s,t\in K$ (or all $s,t\in L$) whenever $\angle(\alpha,\beta)=135^\circ$.
\item In case ${\sf G}$, there exists parameters $\epsilon,\epsilon_1,\ldots,\epsilon_4$ 
such that
$$[U_\alpha,U_\beta]=1$$
whenever $\angle(\alpha,\beta)\le90^\circ$, 
$$[x_\alpha(s),x_\beta(t)]=x_{\alpha+\beta}(\epsilon st)$$
for all $s,t\in K$ whenever $\angle(\alpha,\beta)=120^\circ$ and 
\begin{align*}
[x_\alpha(s),x_\beta(t)]=x_{\sqrt{3}\alpha+\beta}(\epsilon_1s^\theta t)
x_{2\alpha+\sqrt{3}\beta}(&\epsilon_2s^2t^\theta)\cdot\\
&\cdot x_{\sqrt{3}\alpha+2\beta}(\epsilon_3s^\theta t^2)
x_{\alpha+\sqrt{3}\beta}(\epsilon_4st^\theta)
\end{align*}
for all $s,t\in K$ whenever $\angle(\alpha,\beta)=150^\circ$.
The parameters $\epsilon,\epsilon_1,\ldots,\epsilon_4$ are 
all equal to $+1$ or $-1$; their values 
depend on the ordered pair $(\alpha,\beta)$ but not on $s$ or $t$;
and their values are as in \ref{abc6a}--\ref{abc6c} 
if $\alpha,\beta$ both contain the chamber $C$.
\item In all three cases,
$$x_\beta(t)^{m_\Sigma(x_\alpha(1))}=x_{s_\alpha(\beta)}(\pm t)$$
for all $\alpha,\beta\in\Phi$ and all $t\in K$ (or $L$).
\end{thmenum}
\end{theorem}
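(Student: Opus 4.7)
The plan is to read off the Chevalley--Steinberg style parametrizations and commutator relations for each of the three buildings from \cite{TW} and \cite{weiss}, and then to construct the polarity $\rho$ via the diagram symmetry of $\Phi$ made available by the Tits endomorphism $\theta$. First I fix $(C,\Sigma)$ and the special isomorphism $\psi\colon\Sigma(\Phi)\to\Sigma$ sending $S$ to $C$. For each root $\alpha$ containing $C$, the description of the Moufang quadrangle $\mathcal{Q}_{\mathcal D}(\Lambda)$ in \cite[16.4]{TW}, of the ${\sf F}_4$-building in \cite[30.17]{weiss}, and of the Moufang hexagon $\mathcal{H}(\Lambda)$ in \cite[16.8]{TW} provides an isomorphism $x_\alpha$ from the additive group of $L$ (case ${\sf B}$) or $K$ (cases ${\sf F}$ and ${\sf G}$) onto $U_\alpha$. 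For the remaining roots $\alpha$, formula (iv), a consequence of \cite[6.1--6.3]{TW}, is taken as the definition of $x_\alpha$, with the sign ambiguity absorbed into (iv) itself.

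Parts (ii) and (iii), together with (iv), then become a translation of the commutator relations of the Moufang quadrangles and hexagons into our coordinates. The commutation at angle $90^\circ$ reflects the fact that the corresponding rank-two residues are of type ${\sf A}_1\times{\sf A}_1$. The relation at angle $120^\circ$ comes from the ${\sf A}_2(K)$ Chevalley relations inside the ${\sf A}_2$ residues. The relations at angle $135^\circ$ (cases ${\sf B}$ and ${\sf F}$) and $150^\circ$ (case ${\sf G}$) are precisely the defining relations of $\mathcal{Q}_{\mathcal D}(\Lambda)$ and of $\mathcal{H}(\Lambda)$ respectively, with the factors of $\theta$ appearing because the algebraic datum $\Lambda$ is built from the pair $(K,\theta)$. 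The signs $\epsilon,\epsilon_1,\ldots,\epsilon_4$ in case ${\sf G}$ are then read off from the explicit tables \ref{abc6a}--\ref{abc6c} for pairs $(\alpha,\beta)$ both containing $C$; for the remaining pairs one transports the relations by means of (iv) and picks up the indicated $\pm$.

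The essential step is the construction of $\rho$ in (i). The combinatorial diagram automorphism $\tau$ of $\Sigma(\Phi)$ exists precisely because $\Phi$ is one of ${\sf B}_2,{\sf F}_4,{\sf G}_2$, and it exchanges the two $W$-orbits of roots (``short'' and ``long''). I would define $\rho$ on $\langle U_\alpha\mid\alpha\in\Phi\rangle$ by $x_\alpha(t)\mapsto x_{\tau(\alpha)}(t)$ and verify that this rule respects every commutator relation listed in (ii) and (iii); the standard extension argument then produces an automorphism of $\Delta$ fixing $(C,\Sigma)$, whose uniqueness follows from \ref{abc13}. The main obstacle is the consistency check at the sharpest angle: under $\tau$ the roles of the short and long roots in the commutator formula are interchanged, so the arguments $s$ and $t$ swap places and an extra factor of $\theta$ appears on the formerly un-$\theta$-ed argument. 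The needed compatibility then reduces exactly to the identity $\theta^2=\mathrm{Frob}_p$, which is the defining property of a Tits endomorphism; this is what forces $p=2$ in cases ${\sf B},{\sf F}$ and $p=3$ in case ${\sf G}$. At this point the verification is a finite bookkeeping exercise of the kind carried out explicitly in \cite{tits-ree} and \cite{tits-oct}.
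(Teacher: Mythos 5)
Your plan is essentially the route taken in the paper: read the parametrizations and commutator relations off from \cite[16.4]{TW}, \cite[16.8]{TW} and \cite[30.17]{weiss}, get $\rho$ from the extension theorem for automorphisms of root group data (\cite[7.5]{TW}), and propagate to the remaining roots by conjugation. One point is handled more carefully in the paper and is left slightly circular in your write-up: you propose to \emph{define} $x_\alpha$ for the roots $\alpha$ not containing $C$ via formula (iv), ``with the sign ambiguity absorbed into (iv) itself,'' and then also assert (iv) as a property of the $x_\alpha$'s. As written this is circular: (iv) with an unspecified $\pm$ does not uniquely determine $x_\alpha$, and different conjugating elements of $N=\langle m_1,m_6\rangle$ reaching the same root can differ by elements of $N\cap Q$, which flip signs. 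The paper resolves this by making a canonical choice (the unique shortest word in $m_1,m_6$ carrying $\alpha_1$ or $\alpha_6$ to the target root) and then \emph{proving} (iv) from the structural fact (\cite[6.4]{TW} / \cite[2.9]{tits-oct}) that every $g\in N$ carries $x_\alpha(t)$ to $\pm x_{g(\alpha)}(t)$. Once that choice is fixed, your transport argument for (ii)--(iv) and the consistency check needed to invoke the extension theorem for (i) both go through; the remaining observations about the $\theta$-twist and the ``finite bookkeeping'' match what is implicit in the paper's citation of \cite[7.5]{TW} and of \cite{tits-ree}, \cite{tits-oct}. (Also note that uniqueness of $\rho$ is part of the statement of \cite[7.5]{TW}, rather than a direct consequence of \ref{abc13} alone.)
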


\begin{proof}
Suppose first that we are in case ${\sf G}$ and let 
the roots of $\Phi$ be numbered $\alpha_1,\ldots,\alpha_{12}$ going around
the origin clockwise, where the indices are to be read modulo $12$. 
The set of roots of $\Sigma$ can be
identified with $\Phi$ so that $\alpha_i$ contains the chamber $C$ if and only if
$i\in[1,6]$. Thus $\tau(\alpha_i)=\alpha_{7-i}$ for all $i$.
By \cite[16.8]{TW}, $[U_{\alpha_i},U_{\alpha_j}]=1$ whenever $i,j\in[1,6]$ and $i<j\le i+3$
and there exist isomorphisms $x_i$ for each $i\in[1,6]$ 
from the additive group of $K$ to the root group $U_{\alpha_i}$
such that for all $s,t\in K$,
\begin{equation}\label{abc6a}
[x_1(s),x_6(t)]=x_2(-s^\theta t)x_3(-s^2t^\theta)x_4(s^\theta t^2)x_5(st^\theta),
\end{equation}
as well as
\begin{equation}\label{abc6b}
[x_1(s),x_5(t)]=x_3(-st)
\end{equation}
and 
\begin{equation}\label{abc6c}
[x_2(s),x_6(t)]=x_4(st).
\end{equation}
By \cite[7.5]{TW}, there exists a unique automorphism $\rho$ of $\Delta$
mapping $(C,\Sigma)$ to itself such that 
$$x_i(t)^\rho=x_{7-i}(t)$$
for all $i\in[1,6]$ and for all $t\in K$. Let $H$ be the chamberwise stabilizer 
of $\Sigma$ in ${\rm Aut}(\Delta)$ 
and let $Q$ be the subgroup of $H$ consisting of those elements $g$
such that for each root $\alpha$ of $\Sigma$, either $g$ centralizes $U_\alpha$
or $g$ inverts every element of $U_\alpha$. Let 
$$m_i=m_\Sigma(x_i(1))$$
for $i=1$ and $6$ and let 
$$N=\langle m_1,m_6\rangle.$$
By \cite[2.9(6)]{tits-oct} and \cite[29.12]{TW}, 
$|Q|=4$ and the kernel of the action of $N$ on $\Sigma$ is $N\cap Q$ and 
$N/N\cap Q\cong D_{12}$. For each $i\in[7,12]$, there thus is a unique
shortest word $g$ in $m_1$ and $m_6$ mapping $\alpha_j$ to $\alpha_i$, where
$j=1$ if $i$ is odd and $j=6$ if $i$ is even.
We set $x_i(t)=x_j(t)^g$ for all $t\in K$. 
By \cite[6.2]{TW}, $m_1^\rho=m_6$ and $m_6^\rho=m_1$.
It follows that $x_i(t)^\rho=x_{\tau(i)}(t)$ for all $t\in K$ and for all $i\in[7,12]$.
Thus (i) holds.
Let $\alpha,\beta\in\Phi$. By \cite[6.4]{TW} (or \cite[2.9]{tits-oct}), there exists $g\in N$
mapping $\alpha$ to $\beta$ 
such that $x_\alpha(t)^g=x_\beta(\pm t)$ for all $t\in K^*$. Since $g$ 
is unique up to an element of $N\cap Q$, it follows that 
$x_\alpha(t)^g=x_\beta(\pm t)$ for all $t\in K^*$ and for all $g\in N$ mapping
$\alpha$ to $\beta$. Thus (iv) holds (in case ${\sf G}$). By \ref{abc6a}--\ref{abc6c}, it follows
that also (iii) holds.

The proof in case ${\sf B}$ is virtually the same, except that we cite
\cite[16.4]{TW} in place of \cite[16.8]{TW} and use the observation that 
this time $m_\Sigma(u)^2=1$
for all $\alpha\in\Phi$ and all $u\in U_\alpha^*$ by \ref{abc8a}.ii (since all the root groups
are of exponent two). In case ${\sf F}$, the proof in the 
previous two cases can be suitably modified (using \cite[32.14]{weiss} 
in place of \cite[16.4]{TW}). We do not give the details, however,
since the claims in case ${\sf F}$ also follow from 
the observations in \cite[1.1--1.2]{tits-oct}.
\end{proof}

\begin{definition}\label{abc7}
Let $\Delta$ and $\rho$ be as in \ref{abc6}, 
let $\Delta^\rho$ denote the set of chambers fixed by $\rho$ and let 
$G^\dagger$ be as in \ref{abc13}. 
Let $G$ be the group induced on $\Delta^\rho$ by the centralizer of $\rho$ in 
$G^\dagger$. The group $G$ is commonly called ${\rm Suz}(K,\theta,L)$
in case ${\sf B}$ (these are the {\it Suzuki groups}), 
${\rm Ree}(K,\theta)$ in case ${\sf G}$ 
(these are the {\it Ree groups}) and $^2\!F_4(K,\theta)$ in case
${\sf F}$. 
The groups $^2\!F_4(K,\theta)$ were also discovered by Ree and are also 
sometimes called Ree groups, 
but these groups are now more commonly associated
with Tits due to his thorough study of them in \cite{tits-oct}. The name
${\rm Suz}(K,\theta,L)$ is usually abbreviated to ${\rm Suz}(K,\theta)$ 
when $L=K$.
\end{definition}

We use the rest of this section to prove a result (\ref{abc80}) about 
valuations of the root datum of the building $\Delta$.

\begin{proposition}\label{abc78}
Let $\Delta$, $\Phi$, $\Sigma$ and $(x_\alpha)_{\alpha\in\Phi}$ be as in \ref{abc6},
let $\alpha,\beta\in\Phi$, let $t\in K$, let $u\in K^*$ (or $t\in L$ and $u\in L^*$
in case ${\sf B}$) and let
$$h(u)=m_\Sigma(x_\alpha(1))m_\Sigma(x_\alpha(u)).$$
Then the following hold:
\begin{thmenum}
\item In cases ${\sf B}$ and ${\sf F}$,
$$x_\beta(t)^{h(u)}=x_\beta(u^{-\theta}t)$$
if $\angle(\alpha,\beta)=45^\circ$ and
$$x_\beta(t)^{h(u)}=x_\beta(u^\theta t)$$
if $\angle(\alpha,\beta)=135^\circ$.
\item In case ${\sf G}$,
$$x_\beta(t)^{h(u)}=x_\beta(u^\theta t)$$
if $\angle(\alpha,\beta)=30^\circ$ and 
$$x_\beta(t)^{h(u)}=x_\beta(u^{-\theta}t)$$
if $\angle(\alpha,\beta)=150^\circ$.
\item In all three cases,
$$x_\beta(t)^{h(u)}=x_\beta(u^{-2}t)$$
if $\alpha=\beta$,
$$x_\beta(t)^{h(u)}=x_\beta(u^{-1}t)$$
if $\angle(\alpha,\beta)=60^\circ$,
$$x_\beta(t)^{h(u)}=x_\beta(t)$$
if $\angle(\alpha,\beta)=90^\circ$,
$$x_\beta(t)^{h(u)}=x_\beta(ut)$$
if $\angle(\alpha,\beta)=120^\circ$ and
$$x_\beta(t)^{h(u)}=x_\beta(u^2t)$$
if $\alpha=-\beta$.
\end{thmenum}
\end{proposition}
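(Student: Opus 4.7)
The plan is to reduce the computation to an action by scalars, then settle the case $\beta = \pm\alpha$ inside the rank-one subgroup $G_\alpha := \langle U_\alpha, U_{-\alpha}\rangle$, and finally propagate to the remaining $\beta$ using the commutator relations of \ref{abc6}(ii)--(iii). First, by \ref{abc8a}(i) both $m_\Sigma(x_\alpha(1))$ and $m_\Sigma(x_\alpha(u))$ map $\Sigma$ to itself and induce the reflection $s_\alpha$ on it, so $h(u)$ fixes $\Sigma$ chamberwise; by \ref{abc8} it therefore normalizes every root group $U_\beta$ of $\Sigma$. Since each $U_\beta$ is abelian and parametrised additively by $x_\beta$, the induced automorphism is additive, and conjugating the commutator relations of \ref{abc6}(ii)--(iii) by $h(u)$ and matching components forces this automorphism to be multiplication by a scalar $c_\beta(u)$ in the parameter ring. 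Thus $x_\beta(t)^{h(u)} = x_\beta(c_\beta(u) t)$, and the task is to identify $c_\beta(u)$ in each case.

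For $\beta = \pm\alpha$, the computation is purely rank-one. Using \ref{abc8a} together with \ref{abc6}(iv), I would verify the $\mathrm{SL}_2$-style presentation
\[
m_\Sigma(x_\alpha(c)) = x_{-\alpha}(\sigma c^{-1})\, x_\alpha(c)\, x_{-\alpha}(\sigma c^{-1})
\]
for a fixed sign $\sigma \in \{\pm 1\}$ and every admissible $c$. Substituting this into the definition of $h(u)$ and multiplying out inside the abelian groups $U_{\pm\alpha}$ yields $c_\alpha(u) = u^{-2}$ and $c_{-\alpha}(u) = u^{2}$, which are the first and last formulas of (iii).

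For the remaining $\beta$ I would conjugate the appropriate commutator relation of \ref{abc6}(ii)--(iii) by $h(u)$ and match coefficients root group by root group. The case $\angle(\alpha,\beta)=90^\circ$ is immediate: by \ref{abc6}(ii)--(iii) we have $[U_{\pm\alpha}, U_\beta]=1$, so $U_\beta$ centralises $G_\alpha\ni h(u)$ and hence $c_\beta(u) = 1$. For $\angle(\alpha,\beta) \in \{60^\circ, 120^\circ\}$, matching the coefficients of the ${\sf A}_2$-relation $[x_\alpha(s), x_\beta(t)] = x_{\alpha+\beta}(\epsilon s t)$ yields the multiplicative identity $c_\alpha(u) c_\beta(u) = c_{\alpha+\beta}(u)$; combined with the known $c_\alpha(u) = u^{-2}$ and a second identity obtained from a further pair of roots in the same ${\sf A}_2$ subsystem (or equivalently by relabelling $\alpha \leftrightarrow -\alpha$), this pins down $c_\beta(u) = u$ at $120^\circ$ and $c_\beta(u) = u^{-1}$ at $60^\circ$. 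For the twisted angles $45^\circ, 135^\circ$ in cases ${\sf B},{\sf F}$ and $30^\circ, 150^\circ$ in case ${\sf G}$, the corresponding commutator formula of \ref{abc6}(ii) or \ref{abc6}(iii) involves a factor of $s^\theta$; conjugating and matching therefore produces identities tying $c_\beta(u)^\theta$ to powers of $u$ and $u^\theta$, which (in combination with the untwisted cases already settled for the other roots in the rank-two subsystem) solve to give the formulas $c_\beta(u) = u^{\pm\theta}$ asserted in (i)--(ii).

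The hard part is the bookkeeping: the signs $\pm$ in \ref{abc6}(iv), the parameters $\epsilon$ and $\epsilon_1,\ldots,\epsilon_4$ in \ref{abc6}(iii), and the arithmetic identities coming from $\theta^2 = {}$Frobenius in characteristic $p = 2$ or $3$ must all be tracked carefully so that each $c_\beta(u)$ emerges exactly in the clean form stated, with no residual sign or residual power of $\theta$. A minor additional point in case ${\sf B}$ is that $c_\beta(u)$ must lie in the correct parameter subring (either $L$ or $L^\theta$, depending on which $U_\beta$ is examined); this follows once $c_\beta(u)\,t$ is known to parametrise an element of $U_\beta$.
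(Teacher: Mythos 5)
Your overall set-up is sound (that $h(u)$ fixes $\Sigma$ chamberwise by \ref{abc8a}.i, hence normalizes each $U_\beta$ and acts on it by a scalar), and your handling of the $90^\circ$ case is correct. But the rest of the plan reverses the logical flow of the paper's proof, and in doing so it runs into two genuine difficulties.

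First, the rank-one step. The explicit $\mathrm{SL}_2$-style formula
$m_\Sigma(x_\alpha(c)) = x_{-\alpha}(\sigma c^{-1})\,x_\alpha(c)\,x_{-\alpha}(\sigma c^{-1})$
does not follow from \ref{abc8a} and \ref{abc6}(iv): those results only assert the \emph{existence} of $\kappa,\lambda$ with values in $U_{-\alpha}$ and tell you how $m_\Sigma(x_\alpha(1))$ permutes the root groups, not what $\kappa$ and $\lambda$ are. You would need to compute the $\mu$-maps of the rank-one Moufang set $\langle U_\alpha, U_{-\alpha}\rangle$ for each type of root $\alpha$ in each of the three cases, which is a non-trivial piece of input that your sketch leaves out.

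Second, and more seriously, the propagation from $c_{\pm\alpha}$ to $c_\beta$ for $\beta\neq\pm\alpha$ is circular. Conjugating $[x_\alpha(s),x_\beta(t)]=x_{\alpha+\beta}(\epsilon st)$ gives $c_\alpha c_\beta = c_{\alpha+\beta}$; the ``second identity obtained by relabelling $\alpha\leftrightarrow -\alpha$'' is $c_{-\alpha}c_{\alpha+\beta}=c_\beta$. Substituting the first into the second gives $c_{-\alpha}c_\alpha c_\beta = c_\beta$, i.e.\ just the known fact $c_\alpha c_{-\alpha}=1$; it does not pin down $c_\beta$. You are left with one genuine equation in the two unknowns $c_\beta, c_{\alpha+\beta}$, and the $\mathsf{A}_2$ relations by themselves will never break this degeneracy. (The twisted $45^\circ/135^\circ$ and $30^\circ/150^\circ$ cases then inherit the same problem, since you propose to feed the ``already settled'' $60^\circ/120^\circ$ values into them.) To determine the individual $c_\beta$ you would need additional, independent constraints — e.g.\ a further rank-one computation for $\beta$ itself, or a careful use of the full system of commutator relations together with the perpendicular roots — none of which is spelled out.

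The paper avoids both issues by running the argument in the opposite direction: it first obtains the values $c_\beta$ for all $\beta\neq\pm\alpha$ by citing the explicit formulas of \cite[2.9]{tits-oct} (together with \ref{abc6}(ii)--(iii)), and only then conjugates the relations \ref{abc78a}--\ref{abc78d} by $h(u)$. In that direction there is exactly \emph{one} unknown on the left-hand side (the scalar for $U_{\pm\alpha}$) while every scalar on the right-hand side is already known, so matching coefficients immediately yields $c_\alpha=u^{-2}$ and $c_{-\alpha}=u^2$; a separate branch using the $135^\circ$ relation handles case $\mathsf{B}$, where no $120^\circ$ pair exists. That the paper treats $\beta\neq\pm\alpha$ as the \emph{input} rather than the output is the crucial point your proposal misses.
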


\begin{proof}
By \ref{abc8a}.i, $h(u)$ acts trivially on $\Sigma$ and hence normalizes $U_\beta$.
The claims hold for $\beta\ne\pm\alpha$ by parts (ii) and (iii) of \ref{abc6} 
and \cite[2.9]{tits-oct}. 

Suppose that we can choose $\beta\in\Phi$ such that $\angle(\alpha,\beta)=120^\circ$.
By \ref{abc6}, we have
\begin{equation}\label{abc78a}
[x_\alpha(t),x_\beta(s)]=x_{\alpha+\beta}(\epsilon st)
\end{equation}
and 
\begin{equation}\label{abc78c}
[x_{\alpha+\beta}(s),x_{-\alpha}(t)]=x_\beta(\epsilon' st)
\end{equation}
for all $s\in K$, where $\epsilon,\epsilon'=\pm1$.
We know that $x_\beta(1))^{h(u)}=x_\beta(u)$ and 
$$x_{\alpha+\beta}(t)^{h(u)}=x_{\alpha+\beta}(u^{-1}t).$$
Setting $s=1$ in \ref{abc78a} and conjugating by $h(u)$, we obtain
$$[x_\alpha(t)^{h(u)},x_\beta(u)]=x_{\alpha+\beta}(\epsilon u^{-1}t).$$
Comparing this identity with \ref{abc78a} itself, 
we conclude that $x_\alpha(t)^{h(u)}=x_\alpha(u^{-2}t)$.
Setting $s=1$ in \ref{abc78c} and conjugating by $h(u)$, we conclude
similarly that $x_{-\alpha}(t)^{h(u)}=x_{-\alpha}(u^2t)$.
  
Suppose next that there is no $\beta\in\Phi$ such that $\angle(\alpha,\beta)=120^\circ$.
Then we are in case ${\sf B}$, so ${\rm char}(K)=2$, and 
we can find $\beta$ such that $\angle(\alpha,\beta)=135^\circ$. Thus
\begin{equation}\label{abc78b}
[x_\alpha(t),x_\beta(s)]=x_{\sqrt{2}\alpha+\beta}(t^\theta s)x_{\alpha+\sqrt{2}\beta}(ts^\theta)
\end{equation}
and
\begin{equation}\label{abc78d}
[x_{\sqrt{2}\alpha+\beta}(s),x_{-\alpha}(t)]=x_{\alpha+\sqrt{2}\beta}(s^\theta t)x_\beta(st^\theta)
\end{equation}
by \ref{abc6}.ii. We know that 
$h(u)$ centralizes $U_3$, $x_\beta(1)^{h(u)}=x_\beta(t^\theta)$ and
$$x_{\sqrt{2}\alpha+\beta}(t^\theta)^{h(u)}=x_{\sqrt{2}\alpha+\beta}(u^{-\theta}t^\theta).$$
Setting $s=1$ in \ref{abc78b} and conjugating by $h(u)$, we obtain
$$[x_\alpha(t)^{h(u)},x_\beta(u^\theta)]=
x_{\sqrt{2}\alpha+\beta}(u^{-\theta}t^\theta)x_{\alpha+\sqrt{2}\beta}(t).$$
Comparing this identity with 
\ref{abc78b} itself, we conclude again that 
$$x_\alpha(t)^{h(u)}=x_\alpha(u^{-2}t).$$
Setting $s=1$ in the identity \ref{abc78d} and conjugating 
by $h(u)$, we conclude similarly that $x_{-\alpha}(t)^{h(u)}=x_{-\alpha}(u^2t)$
also in this case.
\end{proof}

\begin{corollary}\label{abc81}
Let $\Delta$, $\Phi$, $\Sigma$ and $(x_\alpha)_{\alpha\in\Phi}$ be as in \ref{abc6},
let $G^\dagger$ be as in \ref{abc13} and let $H$ be the chamberwise stabilizer of
$\Sigma$ in $G^\dagger$. 
Then for each $\alpha\in\Phi$ and for each $h\in H$, there exists $z\in K^*$
(or $z\in F^*$ in case ${\sf B}$, where $F=K^\theta$) such that
$$x_\alpha(t)^h=x_\alpha(zt)$$
for all $t\in K$ (or all $t\in L$).
\end{corollary}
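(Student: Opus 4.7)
The plan is to reduce the statement to a direct application of Proposition~\ref{abc78}. Fix $h\in H$ and $\alpha\in\Phi$. Since $H$ fixes every chamber of $\Sigma$, it preserves every root of $\Sigma$, hence normalizes each root group $U_\alpha$, and so there exists an additive bijection $\phi$ of the parameter set (which is $K$ in cases ${\sf F}$ and ${\sf G}$, and $L$ in case ${\sf B}$) with $x_\alpha(t)^h = x_\alpha(\phi(t))$ for all $t$. The task is to prove that $\phi$ is multiplication by a fixed element $z$ of $K^*$ (respectively $F^*$).

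To do this I would invoke the standard BN-pair structure of $G^\dagger$ to assert that the chamberwise stabilizer $H$ of $\Sigma$ in $G^\dagger$ is generated by the elements
$$h_\beta(u):=m_\Sigma(x_\beta(1))\,m_\Sigma(x_\beta(u))\qquad(\beta\in\Phi,\ u\in K^*\text{ or }u\in L^*)$$
together with the order-four subgroup $Q$ appearing in the proof of \ref{abc6}, whose non-trivial elements either centralize or invert each root group. For the three buildings of interest this description can be extracted from the structural results in \cite{TW} (and, in case ${\sf F}$, from \cite{tits-oct}). Granted this, the corollary reduces to the claim that every one of these generators acts on $U_\alpha$ by scalar multiplication.

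By Proposition~\ref{abc78}, each generator $h_\beta(u)$ conjugates $x_\alpha(t)$ to $x_\alpha(zt)$ where $z$ is one of $u^{\pm 2}$, $u^{\pm 1}$, $1$, or $u^{\pm\theta}$, depending on the angle between $\alpha$ and $\beta$. In cases ${\sf F}$ and ${\sf G}$ all these multipliers plainly lie in $K^*$. In case ${\sf B}$ one must check that each multiplier lies in $F^*$: the exponents $\pm\theta$ give $u^{\pm\theta}\in K^\theta=F$, while $u^{\pm 2}\in K^p\subseteq F$ because $\operatorname{char}(K)=2$ and $F\supseteq K^p$ by \ref{abc33}; and multiplication by an element of $F$ preserves the parameter set $L$ because $L\cdot F\subseteq L$. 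The contribution from $Q$ is multiplication by $\pm 1\in K^*$ in case ${\sf G}$ and is trivial in cases ${\sf B}$ and ${\sf F}$ (since $\operatorname{char}(K)=2$). Since scalar multiplications on $U_\alpha$ are closed under composition, the conclusion propagates from the generators to an arbitrary $h\in H$.

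The principal obstacle is the generation statement for $H$: while the structure of the chamberwise stabilizer of a fundamental apartment as the group generated by ``torus'' elements of the form $h_\beta(u)$ (together with $Q$) is a familiar part of the Moufang theory, a precise citation tailored to the three buildings considered here must be identified. Once this is in place, the remainder is bookkeeping with Proposition~\ref{abc78}, consistent with the statement being phrased as a corollary of that proposition.
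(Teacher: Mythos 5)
Your proposal is essentially the paper's proof: the paper cites \cite[33.9]{TW} for the generation of $H$ by the elements $m_\Sigma(x_\beta(1))m_\Sigma(x_\beta(u))$ and then invokes Proposition~\ref{abc78} exactly as you do. One small loose end: in case ${\sf B}$, having listed $u^{\pm1}$ among the possible multipliers arising from Proposition~\ref{abc78}, you verify that $u^{\pm\theta}$ and $u^{\pm2}$ lie in $F^*$ but say nothing about $u^{\pm1}$ (which for $u\in L^*$ need not lie in $F^*$)---you should observe that no pair of roots in $\dot\Phi={\sf B}_2$ makes an angle of $60^\circ$ or $120^\circ$, so that case simply cannot occur; and the inclusion of $Q$ in your generating set is superfluous, since \cite[33.9]{TW} already describes the chamberwise stabilizer in $G^\dagger$ as generated by the torus elements alone.
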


\begin{proof}
By \cite[33.9]{TW}, 
$$H=\langle m_\Sigma(x_\alpha(1))m_\Sigma(x_\alpha(u))\mid\alpha\in\Phi\text{ and }u\in K^*
\text{ (or $L^*$)}\rangle.$$
The claim holds, therefore, by \ref{abc78}.
\end{proof}

\begin{notation}\label{abc79}
Let $\Phi$ be as in \ref{abc33}. Let $\Phi=\Phi_0\cup\Phi_1$ be the partition
of $\Phi$ into two subsets $\Phi_0$ and $\Phi_1$ 
such that two elements of $\Phi$ are in the same
subset if and only if they have the same length.
\end{notation}

\begin{proposition}\label{abc80}
Let $\Delta$, $\Phi$, $\Sigma$, $\rho$ and $(x_\alpha)_{\alpha\in\Phi}$ be as in \ref{abc6},
let $\Phi_0$ and $\Phi_1$ be as in \ref{abc79},
let $\nu$ be a (real valued) valuation of the field $K$
and let $p={\rm char}(K)$. Let 
$$\varphi_\alpha(x_\alpha(t))=\nu(t)$$
for all $\alpha\in\Phi_0$ and all $t\in K^*$ (or all $t\in L^*$), let 
$$\varphi_\alpha(x_\alpha(t))=\nu(t^\theta)/\sqrt{p}$$
for all $\alpha\in\Phi_1$ and all $t\in K^*$ (or all $t\in L^*$) and let 
$$\varphi=(\varphi_\alpha)_{\alpha\in\Phi}.$$ 
Then the following hold:
\begin{thmenum}
\item $\varphi$ is a valuation of the root datum of $\Delta$ based at $\Sigma$.
\item $\varphi$ is $\rho$-invariant 
(as defined in \ref{abc32}) if and only if $\nu$ is $\theta$-invariant. 
\end{thmenum}
\end{proposition}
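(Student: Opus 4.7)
Plan. To prove (i) I will verify the three axioms V1, V2, V3 of Definition \ref{abc14} for the collection $\varphi$. Axiom V1 is immediate: each $\varphi_\alpha$ is the composition of the additive isomorphism $x_\alpha^{-1}\colon U_\alpha\to (K,+)$ (or $(L,+)$ in case ${\sf B}$) with either $\nu$ or $t\mapsto\nu(t^\theta)/\sqrt p$. Since both $\nu$ and $\nu\circ\theta$ are valuations on $K$, the set $U_{\alpha,k}$ corresponds to the additive subgroup $\{t\in K:\nu(t)\ge k\}$ or $\{t:\nu(t^\theta)\ge\sqrt p\,k\}$.

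V2 is a case-by-case check against the commutator relations in \ref{abc6}.ii--iii. The cases with $\angle(\alpha,\beta)\le 90^\circ$ are trivial. For $\angle(\alpha,\beta)=120^\circ$ (cases ${\sf F}$ and ${\sf G}$) all three roots $\alpha,\beta,\alpha+\beta$ have the same length, hence lie in one of $\Phi_0,\Phi_1$, and V2 reduces to $\nu(st)=\nu(s)+\nu(t)$ with $p_1=q_1=1$. For $\angle(\alpha,\beta)=135^\circ$ (cases ${\sf B}$ and ${\sf F}$) and $\angle(\alpha,\beta)=150^\circ$ (case ${\sf G}$), the intermediate roots $\gamma_i$ alternate in length and the coefficients $p_i,q_i$ of \ref{abc16b} acquire a factor $\sqrt p$ (namely $\sqrt2$ or $\sqrt3$) at exactly the places where the commutator formula of \ref{abc6} introduces a $\theta$-twist. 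The identity $\nu(t^{\theta^2})=\nu(t^p)=p\,\nu(t)$ then converts $\theta$-twists on the $K$-side into $\sqrt p$-factors on the valuation side, and a direct reading of \ref{abc6}.ii--iii shows that each factor $x_{\gamma_i}(\cdot)$ in the commutator has $\varphi_{\gamma_i}$-value exactly $p_ik+q_il$ when $\varphi_\alpha(x_\alpha(s))\ge k$ and $\varphi_\beta(x_\beta(t))\ge l$.

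V3 is the most subtle axiom. Write $u=x_\alpha(v)$ and $g=x_\beta(t)$, and set $m_0=m_\Sigma(x_\alpha(1))$, so that $m_\Sigma(u)=m_0^{-1}h(v)$ with $h(v)$ as in \ref{abc78}. By \ref{abc78} the inner conjugation $g\mapsto g^{h(v)}$ sends $x_\beta(t)$ to $x_\beta(c(v)\,t)$, where the scalar $c(v)\in K^*$ depends only on $v$, on $\angle(\alpha,\beta)$ and on the lengths of $\alpha$ and $\beta$; and \ref{abc6}.iv, applied to the root $s_\alpha(\beta)$, shows that the subsequent conjugation by $m_0^{-1}$ sends $x_\beta(c(v)\,t)$ to $x_{s_\alpha(\beta)}(\pm c(v)\,t)$. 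Hence $\varphi_{s_\alpha(\beta)}(g^{m_\Sigma(u)})-\varphi_\beta(g)$ depends only on $v$ and on the ordered pair $(\alpha,\beta)$, not on $t$, which is the first part of V3. When $\alpha=\beta$, the $\alpha=-\beta$ line of \ref{abc78}.iii yields $c(v)=v^{-2}$, and the computation $\varphi_{-\alpha}(x_{-\alpha}(\pm v^{-2}t))-\varphi_\alpha(x_\alpha(t))$ evaluates to $-2\nu(v)=-2\varphi_\alpha(u)$ when $\alpha\in\Phi_0$, and to $\nu(v^{-2\theta})/\sqrt p=-2\nu(v^\theta)/\sqrt p=-2\varphi_\alpha(u)$ when $\alpha\in\Phi_1$ (since $\theta$ is a field homomorphism).

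For (ii), \ref{abc6}.i gives $x_\alpha(t)^\rho=x_{\tau(\alpha)}(t)$, and the involution $\tau$ of \ref{abc66} exchanges long and short roots, hence interchanges $\Phi_0$ and $\Phi_1$. The condition $\varphi_\alpha(x_\alpha(t))=\varphi_{\tau(\alpha)}(x_{\tau(\alpha)}(t))$ therefore becomes $\nu(t)=\nu(t^\theta)/\sqrt p$ for $\alpha\in\Phi_0$ (with the symmetric equality for $\alpha\in\Phi_1$), i.e.\ $\nu(t^\theta)=\sqrt p\,\nu(t)$ for all $t\in K^*$, which is exactly the $\theta$-invariance of $\nu$ recalled in Section~1. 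The main obstacle will be the bookkeeping in V2 and V3: one must verify uniformly across cases ${\sf B}$, ${\sf F}$, ${\sf G}$ that the $\theta$-twists in \ref{abc6}.ii--iv and \ref{abc78} interact correctly with the $\sqrt p$-normalization built into $\varphi_\alpha$ on $\Phi_1$, and confirm that the coefficients $p_i,q_i$ forced on us by \ref{abc16b} match the exponents arising from $\theta$ in the explicit geometry of ${\sf B}_2$, ${\sf F}_4$ and ${\sf G}_2$.
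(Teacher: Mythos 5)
Your proof follows essentially the same route as the paper: V1 is immediate from the definition of a valuation, V2 is a case-by-case check against the commutator relations of \ref{abc6}.ii--iii, V3 is obtained by factoring $m_\Sigma(u)$ through the $h$-elements of \ref{abc78} combined with \ref{abc6}.iv, and part (ii) is the one-line consequence of \ref{abc6}.i. So this is the paper's argument in substance.

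One small caveat on the V3 bookkeeping: you write $m_\Sigma(u)=m_0^{-1}h(v)$ and then apply $h(v)$ to $g$ \emph{before} applying $m_0^{-1}$. But $g^{m_0^{-1}h(v)}=(g^{m_0^{-1}})^{h(v)}$, so the conjugation by $m_0^{-1}$ acts first, moving $g$ from $U_\beta$ to $U_{s_\alpha(\beta)}$, and only then does $h(v)$ rescale. In particular, when $\alpha=\beta$ the element $h(v)$ must be applied to something already in $U_{-\alpha}$, so it is the $\alpha=-\beta$ line of \ref{abc78}.iii, not the $\alpha=\beta$ line, that produces the scalar. (The paper's own proof evades this by instead writing $\varphi_{-\alpha}(g^{m_\Sigma(u)})=\varphi_\alpha(g^{m_\Sigma(u)m_0^{-1}})$ and factoring $m_\Sigma(u)m_0^{-1}=(m_0 m_\Sigma(-u))^{-1}$, so that one stays in $U_\alpha$ throughout and uses the $\alpha=\beta$ line, applied to the inverse of an $h$-element.) The conceptual approach is correct; just fix the order of the two conjugations and make sure the correct line of \ref{abc78}.iii is invoked.
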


\begin{proof}
The collection $\varphi$ satisfies condition (V1) in \ref{abc14} by the definition of a valuation.
By parts (ii) and (iii) of \ref{abc6} and some calculation, $\varphi$ satisfies condition
(V2). Choose $\alpha,\beta\in\Phi$, $u\in U_\alpha^*$ and $g\in U_\beta^*$.
Suppose first that $\alpha=\beta$. By \ref{abc6}.iv, 
$$\varphi_{-\alpha}(g^{m_\Sigma(u)})
=\varphi_\alpha(g^{m_\Sigma(u)m_\Sigma(x_\alpha(1))^{-1}}).$$
By \ref{abc8a}.ii, therefore
$$\varphi_{-\alpha}(g^{m_\Sigma(u)})
=\varphi_\alpha(g^{(m_\Sigma(x_\alpha(1))m_\Sigma(-u))^{-1}}).$$
Hence 
$$\varphi_{-\alpha}(g^{m_\Sigma(u)})-\varphi_\alpha(g)=-2\varphi_{\alpha}(u)$$
by \ref{abc78}.iii. By \ref{abc6}.iv
and \ref{abc78} (and a similar calculation), the quantity
$$\varphi_{s_\alpha(\beta)}(g^{m_\Sigma(u)})-\varphi_\beta(g)$$
is independent of $g$ whenever $\beta\ne\alpha$. Thus $\varphi$ satisfies
condition (V3). Hence (i) holds.
By \ref{abc6}.i, $\varphi$ is $\rho$-invariant if and only if $\nu(t^\theta)/\sqrt{p}
=\nu(t)$ for all $t\in K^*$. Thus (ii) holds.
\end{proof}

\section{Spherical Buildings for the Ree and Suzuki groups}

In this section we show that the $G$-set $\Delta^\rho$ defined in \ref{abc7} 
has the structure of a spherical building satisfying
the Moufang condition whose root groups generate the group $G$.

\begin{notation}\label{abc400}
Let $\dot{\Phi}$ and $\dot{\mathbb A}$
be the sets called $\Phi$ and ${\mathbb A}$ in \ref{abc33} and \ref{abc66}.
We then set ${\mathbb A}$ equal to the set of fixed points of
the automorphism $\tau$ of $\dot{\Phi}$ introduced in \ref{abc66} (which
we continue to call $\tau$.
\end{notation}

\begin{proposition}\label{abc98}
Let $\dot{\mathbb A}$, $\dot{\Phi}$, $\tau$ and ${\mathbb A}$ be as in \ref{abc400} and 
let $\ddot{\alpha}=\dot{\alpha}+\dot{\alpha}^\tau$ (so $\ddot{\alpha}\in{\mathbb A}$
since $\tau^2=1$) for each $\dot{\alpha}\in\dot{\mathbb A}$.
Then the following hold:
\begin{thmenum}
\item If $\dot{\alpha}\in\dot{\Phi}$, then $\ddot{\alpha}\ne0$.
\item Let 
$$\Phi=\{\ddot{\alpha}/|\ddot{\alpha}|\mid\dot{\alpha}\in\dot{\Phi}\}.$$
Then $\Phi$ is, up to an isometry of ${\mathbb A}$, the root system ${\sf A}_1$
in cases ${\sf B}$ and ${\sf G}$ and
${\sf I}_2(8)$ (as defined in \ref{abc0}) in case ${\sf F}$.
\item For each $\dot{\alpha}\in\dot{\Phi}$, the half-space
$$H_\alpha:=\{v\in{\mathbb A}\mid v\cdot\ddot{\alpha}\ge0\}$$ 
of $\Sigma(\Phi)$ (as defined in \ref{abc60}) is the intersection
of the half-space
$$H_{\dot{\alpha}}:=\{v\in\dot{\mathbb A}\mid v\cdot\dot\alpha\ge0\}$$
of $\Sigma(\dot{\Phi})$ with ${\mathbb A}$. 
\item The map $\dot{F}\mapsto\dot{F}\cap{\mathbb A}$ is an
inclusion-preserving 
bijection from the set of $\tau$-invariant faces
of $\dot{\Phi}$ to the set of faces of $\Phi$.
\end{thmenum}
\end{proposition}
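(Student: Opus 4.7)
My plan is to handle the four assertions in order, leveraging two structural facts about $\tau$ that I take from \ref{abc66} and \ref{abc0}: that $\tau$ fixes the sector $S$ setwise (so it preserves the partition of $\dot\Phi$ into positive and negative roots with respect to $S$), and that $\tau$ is an involutive isometry of $\dot{\mathbb A}$ whose fixed subspace is ${\mathbb A}$.

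For (i), I would argue that $\tau$-invariance of $\dot\Phi^+$ forces $\dot\alpha$ and $\dot\alpha^\tau$ to lie on the same side of $S$, so that $\ddot\alpha = \dot\alpha + \dot\alpha^\tau$ is a non-zero sum of two positive (or two negative) roots; equivalently, $\ddot\alpha=0$ would force $\dot\alpha^\tau = -\dot\alpha$, contradicting preservation of $\dot\Phi^+$. For (iii), the condition $v \in {\mathbb A}$ gives $v^\tau = v$, and the isometry identity $x \cdot y = x^\tau \cdot y^\tau$ then yields $v \cdot \dot\alpha^\tau = v \cdot \dot\alpha$, so $v \cdot \ddot\alpha = 2\, v \cdot \dot\alpha$; dividing by $2$ gives $H_\alpha = H_{\dot\alpha} \cap {\mathbb A}$.

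For (ii), I would first record that in cases ${\sf B}$ and ${\sf G}$ the space $\dot{\mathbb A}$ is two-dimensional, so the non-trivial involutive isometry $\tau$ is a reflection and ${\mathbb A}$ is a line; combined with (i) this forces $\Phi$ to consist of both unit vectors $\pm v$ on that line, which is ${\sf A}_1$. In case ${\sf F}$, where $\dot{\mathbb A}$ is four-dimensional and $\tau$ has a two-dimensional fixed subspace, I would list the $48$ roots of ${\sf F}_4$ in orthonormal coordinates, compute $\ddot\alpha$ for each using the explicit description of $\tau$ from \cite{tits-oct}, and verify that after normalization the values form $16$ unit vectors at angular separation $\pi/8$---the root system ${\sf I}_2(8)$. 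For (iv), inclusion-preservation is automatic. A face of $\dot\Phi$ is cut out as an intersection of closed half-spaces $H_{\dot\alpha}$ (together with some equality constraints) indexed by a subset of $\dot\Phi$, and $\tau$-invariance of the face is equivalent to $\tau$-invariance of this indexing subset. By (iii), intersecting such a face with ${\mathbb A}$ gives the analogous face of $\Phi$ indexed by the image of the subset under $\dot\alpha \mapsto \ddot\alpha/|\ddot\alpha|$; conversely, every face of $\Phi$ lifts to a $\tau$-invariant face of $\dot\Phi$ by taking the $\tau$-closed preimage of its defining set, and injectivity is immediate because the defining set is recovered from $\dot F \cap {\mathbb A}$.

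The main obstacle will be the explicit ${\sf F}_4$ bookkeeping in (ii); conceptually it is routine, but it requires care to tabulate all $48$ roots together with their $\tau$-images and to check that the sixteen distinct values of $\ddot\alpha/|\ddot\alpha|$ are indeed at angular separation $\pi/8$. The remaining parts are short and follow directly from the two structural facts about $\tau$ noted at the outset.
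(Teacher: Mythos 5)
Your proposal is correct in substance, but it takes a genuinely different route from the paper, which does not argue the proposition at all: the paper dispatches cases $\mathsf B$ and $\mathsf G$ with the remark that $\dim\dot{\mathbb A}=2$ and the verification is left to the reader, and for case $\mathsf F$ it simply cites Section~1.3 of Tits's paper on Moufang octagons \cite{tits-oct}. You instead give a self-contained conceptual argument for (i), (iii) and (iv) that works uniformly in all three cases, and reduce (ii) in case $\mathsf F$ to an explicit tabulation of the $48$ roots of $\mathsf F_4$ under $\tau$ --- which you flag as the bookkeeping obstacle but do not carry out. That trade is reasonable: you gain independence from the literature, at the cost of having to reproduce the $\mathsf F_4$ computation that Tits already did.

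Two small points where the outline is a bit looser than it should be. In (i), the cleanest phrasing of your observation is that $\ddot\alpha=0$ is equivalent to $\dot\alpha^\tau=-\dot\alpha$, and since $\tau$ fixes the sector $S$ it preserves the set of roots containing $S$, so $\dot\alpha$ and $\dot\alpha^\tau$ cannot be opposite; you have this, just stated slightly redundantly. In (iv), the phrase ``indexed by a subset of $\dot\Phi$'' glosses over the fact that a face does not have a unique defining set of inequalities; you need the canonical choice (all $\dot\alpha$ with $\dot F\subset H_{\dot\alpha}$, with the equality locus singled out), and for injectivity and surjectivity the hidden ingredient is that every $\tau$-invariant face $\dot F$ has a $\tau$-fixed point in its relative interior (take any $x$ in the relative interior and average $x$ with $x^\tau$), so that $\dot F\cap{\mathbb A}$ meets the relative interior of $\dot F$ and the correspondence can be read off from a single interior point together with (iii). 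With those details filled in, the argument is sound, and the map $\dot\alpha\mapsto\ddot\alpha/|\ddot\alpha|$ does commute with $\tau$ in the way your surjectivity step needs.
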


\begin{proof}
In cases ${\sf B}$ and ${\sf G}$, the dimension of $\dot{\mathbb A}$ is only two;
we leave it to the reader to verify all the claims in these two cases.
In case ${\sf F}$, these assertions are proved in Section~1.3 of \cite{tits-oct}.
\end{proof}

\begin{lemma}\label{abc29x}
Let $\dot{\Phi}$ and $\tau$ be as in \ref{abc400}, let 
$\Phi$ be as in \ref{abc98}.ii,
let $\dot{\Delta}$ be an arbitrary building of type $\dot{\Phi}$ which is not 
necessarily thick, let $\rho$ be a non-type-preserving 
automorphism of $\dot{\Delta}$ of order two
and let $\dot{\Sigma}$ be an apartment of $\dot{\Delta}$. 
Then the following are equivalent:
\begin{thmenum}
\item $\dot{\psi}\circ\tau=\rho\circ\dot{\psi}$
for some type-preserving isomorphism $\dot{\psi}$ from $\Sigma(\dot{\Phi})$ to $\dot{\Sigma}$. 
\item $\rho$ fixes two opposite chambers of $\dot{\Sigma}$.
\item $\rho$ maps $\dot{\Sigma}$ to itself and fixes at least one
chamber of $\dot{\Sigma}$.
\end{thmenum}
\end{lemma}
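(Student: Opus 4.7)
The plan is to prove the three-way equivalence via the cycle (i)$\Rightarrow$(ii)$\Rightarrow$(iii)$\Rightarrow$(i). The central ingredient throughout is the defining property of $\tau$ from \ref{abc66}: it is the unique non-trivial element of ${\rm Aut}(\dot{\Phi})$ that fixes the sector $S$, equivalently, the unique non-type-preserving automorphism of $\Sigma(\dot{\Phi})$ that stabilizes the chamber $S$ setwise.

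For (i)$\Rightarrow$(ii), I would observe that $\tau$, being a linear automorphism of $\dot{\mathbb{A}}$, fixes not only $S$ but also its antipode $-S$. Transporting by $\dot{\psi}$, $\rho$ then fixes the two chambers $\dot{\psi}(S)$ and $\dot{\psi}(-S)$ of $\dot{\Sigma}$, and these are opposite because $\dot{\psi}$ is type-preserving and therefore preserves opposition. For (iii)$\Rightarrow$(i), I would let $C$ be a chamber of $\dot{\Sigma}$ fixed by $\rho$ and take $\dot{\psi}\colon\Sigma(\dot{\Phi})\to\dot{\Sigma}$ to be the unique type-preserving isomorphism sending $S$ to $C$ (existence and uniqueness follow from the simple transitivity of the Weyl group on chambers). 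Then $\dot{\psi}^{-1}\rho\dot{\psi}$ is an automorphism of $\Sigma(\dot{\Phi})$ that fixes $S$ and, since $\rho$ is non-type-preserving while $\dot{\psi}$ is type-preserving, is itself non-type-preserving; the uniqueness clause in \ref{abc66} then forces $\dot{\psi}^{-1}\rho\dot{\psi}=\tau$, yielding $\rho\circ\dot{\psi}=\dot{\psi}\circ\tau$.

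The one implication requiring more than routine bookkeeping is (ii)$\Rightarrow$(iii). Here, given that $\rho$ fixes a pair of opposite chambers $C,C'$ of $\dot{\Sigma}$, I need to conclude that $\rho$ preserves $\dot{\Sigma}$ setwise. The plan is to invoke the standard building-theoretic fact (valid even under the weak hypotheses of the lemma, where $\dot{\Delta}$ need not be thick) that the convex hull of two opposite chambers in a building is always an apartment and is the unique apartment containing both. Then $\dot{\Sigma}$ coincides with this convex hull, and it is automatically $\rho$-invariant because $\rho$ sends the convex hull of its fixed set to itself. This appeal to uniqueness of the apartment through two opposite chambers is the main and essentially the only obstacle to a completely formal argument; the other two implications reduce to the definition of $\tau$ and of type-preserving isomorphism.
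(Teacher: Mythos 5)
Your argument is correct and follows essentially the same line as the paper's: (i)$\Rightarrow$(ii) via $\tau$ fixing both $S$ and its antipode, (ii)$\Rightarrow$(iii) via uniqueness of the apartment through two opposite chambers, and (iii)$\Rightarrow$(i) via rigidity of maps fixing a chamber. The only minor variation is in (iii)$\Rightarrow$(i): you conjugate to get $\dot{\psi}^{-1}\rho\,\dot{\psi}$ and then appeal to the uniqueness clause of \ref{abc66} (which concerns ${\rm Aut}(\dot{\Phi})$, so you are implicitly using that the chamber stabilizer in ${\rm Aut}(\Sigma(\dot{\Phi}))$ is realized by ${\rm Aut}(\dot{\Phi})$, i.e.\ that it is exactly the diagram-automorphism group of order~2); the paper instead forms $\rho^{-1}\circ\dot{\psi}\circ\tau$, notes it is type-preserving and sends $S\mapsto C$, and invokes the uniqueness of a type-preserving isomorphism $\Sigma(\dot{\Phi})\to\dot{\Sigma}$ with $S\mapsto C$, which is marginally more self-contained. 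Both routes are valid and amount to the same idea.
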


\begin{proof}
Let $S$ be as in \ref{abc66}. We think of $S$ as a chamber of $\Sigma(\dot{\Phi})$
and let $S'$ be the unique opposite chamber of $\Sigma(\dot{\Phi})$. 
The map $\tau$ fixes both $S$ and $S'$. Hence if $\dot{\psi}$ is as in (i),  
then $\rho$ fixes the two opposite chambers $\dot{\psi}(S)$ and 
$\dot{\psi}(S')$ of $\dot{\Sigma}$. Thus (i) implies (ii). Since opposite
chambers are contained in a unique apartment, (ii) implies (iii). 
Now suppose (iii) holds. Let $C$ be a chamber of $\dot{\Sigma}$ fixed by 
$\rho$ and let $\dot{\psi}$ be the unique isomorphism from $\Sigma(\dot{\Phi})$
to $\dot{\Sigma}$ mapping $S$ to $C$. Since the composition $\rho^{-1}\circ\dot{\psi}\circ\tau$
is also a type-preserving automorphism from $\Sigma(\dot{\Phi})$
to $\dot{\Sigma}$ mapping $S$ to $C$, it must equal $\dot{\psi}$. Thus (i) holds.
\end{proof}

\begin{definition}\label{abc29y}
Under the hypotheses of \ref{abc29x}, we call an apartment of $\dot{\Delta}$
{\it $\rho$-compatible} if it satisfies the three equivalent conditions in
\ref{abc29x}.
\end{definition}

In the following result, we make implicit use of the fact that a building $\Delta$
is completely determined by the graph whose vertices are the chambers of $\Delta$,
where two chambers are joined by an edge whenever there is a panel of $\Delta$ 
containing them both. (This is the point of view, for example, in \cite{me-sph}.)

\begin{theorem}\label{abc29}
Let $\dot{\Phi}$ and $\tau$ be as in \ref{abc400}, let 
$\Phi$ be as in \ref{abc98}.ii,
let $\dot{\Delta}$ be an arbitrary building of type $\dot{\Phi}$ which is not 
necessarily thick, let $\rho$ be a non-type-preserving 
automorphism of $\dot{\Delta}$ of order two
and let $\dot{\Sigma}$ be an apartment of $\dot{\Delta}$. Suppose that
$\dot{\Sigma}$ is $\rho$-compatible as defined in \ref{abc29y}.
Let $\dot{\psi}$ be as in \ref{abc29x} and let $\dot{\pi}$
be the bijection from $\dot{\Phi}$ to the set of roots of $\dot{\Sigma}$ induced
by the isomorphism $\dot{\psi}$. Let $\Delta$ be the graph whose
vertices are the chambers of $\dot{\Delta}$ fixed by $\rho$, where two such chambers
are joined by an edge whenever they are opposite in a residue of rank
two fixed by $\rho$ and let $\Sigma$ be the subgraph of $\Delta$ 
spanned by the chambers of $\dot{\Sigma}$ fixed by $\rho$.
Then the following hold:
\begin{thmenum}
\item There is a unique isomorphism $\psi$ from $\Sigma(\Phi)$ to $\Sigma$ 
such that 
$$\psi(\dot{S}\cap{\mathbb A})=\dot{\psi}(\dot{S})$$
for each $\tau$-invariant sector $\dot{S}$ of $\dot{\Phi}$.
\item If $\pi$ is the bijection from $\Phi$ to the set of roots of $\Sigma$ induced
by $\psi$, then 
$$\pi(\ddot{\alpha}/|\ddot{\alpha}|)=\dot{\pi}(\dot{\alpha})\cap\Sigma$$ 
for all $\dot{\alpha}\in\dot{\Phi}$,
where the map $\dot{\alpha}\mapsto\ddot{\alpha}$ is as in \ref{abc98}. 
\item $\Delta$ is a building of type $\Phi$ whose apartments are 
the subgraphs spanned by the chambers
fixed by $\rho$ in $\rho$-compatible apartments of $\dot{\Delta}$.
\end{thmenum}
\end{theorem}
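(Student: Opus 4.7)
My plan is to derive parts (i) and (ii) essentially formally from \ref{abc98}, and to handle (iii) in two stages: an apartment-level analysis building on (i) and (ii), followed by a global building-structure argument.

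\emph{Parts (i) and (ii).} I would define $\psi$ on a sector $S$ of $\Phi$ by setting $\psi(S) := \dot{\psi}(\dot{S})$, where $\dot{S}$ is the unique $\tau$-invariant sector of $\dot{\Phi}$ with $\dot{S} \cap {\mathbb A} = S$ (whose existence and uniqueness are given by \ref{abc98}.iv), and extend this prescription to all faces via the same inclusion-preserving bijection. The intertwining $\dot{\psi} \circ \tau = \rho \circ \dot{\psi}$ forces $\psi(S)$ to be a $\rho$-fixed chamber of $\dot{\Sigma}$, and the image of $\psi$ is exactly the set of $\rho$-fixed chambers of $\dot{\Sigma}$ (invoking \ref{abc29x} on subchambers if needed). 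Uniqueness is automatic since a simplicial isomorphism of Coxeter complexes is determined by its values on chambers. The formula in (ii) is then a direct translation of the identity $H_\alpha = H_{\dot{\alpha}} \cap {\mathbb A}$ from \ref{abc98}.iii through the definitions of $\pi$ and $\dot{\pi}$.

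\emph{Part (iii), local analysis.} The first substantial step is to verify that the graph $\Sigma$ coincides, via $\psi$, with the chamber graph of the Coxeter complex $\Sigma(\Phi)$. Two chambers $\psi(S_1), \psi(S_2)$ of $\Sigma$ are $\Delta$-adjacent iff they are opposite in a rank-two residue of $\dot{\Delta}$ fixed by $\rho$. If $S_1$ and $S_2$ are $\Sigma(\Phi)$-adjacent, they share a panel $F$ of $\Phi$, which lifts by \ref{abc98}.iv to a $\tau$-invariant face $\dot{F}$ of $\dot{\Phi}$; the only $\tau$-invariant chambers of $\dot{\Phi}$ containing $\dot{F}$ are $\dot{S}_1, \dot{S}_2$, and a direct inspection of the possible $\tau$-invariant rank-two residues (of types $\{1,4\}$ and $\{2,3\}$ in case ${\sf F}$, and the analogous residues in cases ${\sf B}$ and ${\sf G}$) shows that $\dot{\psi}(\dot{S}_1)$ and $\dot{\psi}(\dot{S}_2)$ are opposite in the $\rho$-invariant rank-two residue of $\dot{\Delta}$ containing them. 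The converse direction is analogous.

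\emph{Part (iii), global building structure.} The remainder consists of verifying the building axioms for $\Delta$ with the proposed apartment system. The main obstacle is the apartment existence axiom: given $\rho$-fixed chambers $C_1, C_2$ of $\dot{\Delta}$, produce a $\rho$-compatible apartment of $\dot{\Delta}$ containing both. My plan is a convexity-plus-opposition argument: start from any apartment $\dot{\Sigma}'$ of $\dot{\Delta}$ through $C_1$ and $C_2$, observe that $\rho(\dot{\Sigma}')$ also contains both, and exploit the $\rho$-stability of the $\dot{\Delta}$-convex hull of $\{C_1, C_2\}$ together with the existence of a $\rho$-fixed chamber opposite $C_1$ (supplied by \ref{abc29x} applied to suitably chosen sub-apartments) to build a $\rho$-invariant apartment through both. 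Once apartment existence is established, the intersection/isomorphism axiom for two $\rho$-compatible apartments sharing a $\rho$-fixed chamber reduces to the corresponding axiom in $\dot{\Delta}$: take a $\dot{\Delta}$-apartment-isomorphism between $\dot{\Sigma}_1$ and $\dot{\Sigma}_2$ fixing their common subapartment, average it with its $\rho$-conjugate to obtain one commuting with $\rho$, and restrict to $\rho$-fixed chambers to get the required isomorphism of $\Delta$-apartments.
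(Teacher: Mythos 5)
Your treatment of parts (i) and (ii), and of the local adjacency analysis in (iii), matches the paper's proof (which simply invokes \ref{abc98} for (i)--(ii) and observes that all $\rho$-fixed chambers of a $\rho$-invariant rank-two residue are pairwise opposite, making $\Delta$ a chamber system). The paper then reduces the rest of (iii), via Theorem~3.11 of \cite{ronan}, to the single assertion that any two $\rho$-fixed chambers of $\dot{\Delta}$ lie in a $\rho$-compatible apartment, which is also the crux you identify. So the framework is aligned.

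The genuine gap is in your apartment-existence argument, which is the entire substance of the proof in case~${\sf F}$. You propose to ``exploit the $\rho$-stability of the $\dot{\Delta}$-convex hull of $\{C_1,C_2\}$ together with the existence of a $\rho$-fixed chamber opposite $C_1$ (supplied by \ref{abc29x} applied to suitably chosen sub-apartments) to build a $\rho$-invariant apartment through both.'' This is not a construction. First, \ref{abc29x} supplies no existence statement at all --- it only records equivalent characterizations of $\rho$-compatibility for an apartment already given; it does not produce a $\rho$-fixed chamber opposite $C_1$, and it certainly does not produce one whose apartment with $C_1$ also contains $C_2$. Second, even granting the existence of some $\rho$-fixed chamber opposite $C_1$, there is no mechanism in your sketch to arrange that the resulting apartment contains $C_2$; the $\rho$-invariance of the convex hull of $\{C_1,C_2\}$ does not by itself let you complete that convex set $\rho$-equivariantly to an apartment. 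Exactly this step is what the paper proves concretely: it introduces alternating galleries in $\Delta$, shows by an inductive projection argument that any two chambers of $\Delta$ are joined by an alternating gallery, shows that any alternating gallery can be extended one step at a time to one of length $8$, and then verifies --- by a gallery-length count and Proposition~7.7.ii in \cite{me-sph} --- that the lift to $\dot{\Delta}$ of an alternating gallery of length $8$ is minimal, hence connects opposite chambers, hence (by Corollaries~8.6 and~8.9 in \cite{me-sph}) lies in a $\rho$-compatible apartment. Your sketch provides no substitute for this chain of reasoning. You also never isolate the observation that in cases ${\sf B}$ and ${\sf G}$ any two distinct chambers of $\Delta$ are already opposite in $\dot{\Delta}$, which is what makes those cases immediate; without it, the rank-one cases are also unaddressed by the convexity heuristic.

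A minor point: to show the isomorphism $\phi\colon\dot{\Sigma}_1\to\dot{\Sigma}_2$ fixing the intersection commutes with $\rho$, you cannot ``average it with its $\rho$-conjugate'' --- simplicial isomorphisms do not average. The correct observation is that $\rho\circ\phi\circ\rho^{-1}$ also fixes $\dot{\Sigma}_1\cap\dot{\Sigma}_2$ pointwise, so by uniqueness it equals $\phi$. This is easily repaired, but as stated it is wrong. (In any case this verification is superfluous once one reduces, as the paper does, to the chamber-system criterion of \cite{ronan}.)
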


\begin{proof}
Assertions (i) and (ii) hold by \ref{abc98}. Next we observe that 
if $R$ is the $\rho$-invariant residue of rank two containing two 
adjacent chambers of $\Delta$, then all the 
chambers in $R$ fixed by $\rho$ are pairwise opposite in $R$
and hence adjacent in $\Delta$. This means
that $\Delta$ is a {\it chamber system} as defined at the beginning
of \cite{ronan}. To show that (iii) holds,
it therefore suffices, by Theorem~3.11 in \cite{ronan} and Theorem~8.21 in \cite{me-sph},
to show that every two chambers of $\Delta$ are contained in an
apartment of $\dot{\Delta}$ containing opposite chambers fixed by $\rho$.
In cases ${\sf B}$ and ${\sf G}$, every two chambers of $\Delta$ are 
opposite in $\dot{\Delta}$. We can thus assume that we are in case ${\sf F}$.

Let $\gamma=(C_0,C_1,\ldots,C_k)$ be a gallery in $\Delta$.
Then for each $i\in[1,k]$, the chambers $C_{i-1}$ and 
$C_i$ are opposite in a $\rho$-invariant $J_i$-residue $R_i$,
where $J_i$ is a two-element subset of the vertex
set $I$ of the diagram ${\sf F}_4$ invariant under the non-trivial
automorphism of this diagram. We will say that $\gamma$ is
{\it alternating} if $J_i\ne J_{i-1}$ for all $i\in[2,k]$. Let 
$\dot{\gamma}$ be a gallery in $\dot{\Delta}$ from
$C_0$ to $C_k$ containing $C_i$ also for all $i\in[1,k-1]$
such that the subgallery from $C_{i-1}$ to $C_i$ is a minimal
gallery in $R_i$ for each $i\in[1,k]$. Thus the length 
of $\dot{\gamma}$ is 
$$m:=d_1+d_2+\cdots+d_k,$$
where $d_i$ is the diameter of $R_i$ for all $i\in[1,k]$.
Now suppose that $k=8$. Then $m$ equals the diameter of $\dot{\Sigma}$
and if $D$ and $D'$ are opposite chambers in $\dot{\Sigma}\cap\Delta$,
then there exists a unique minimal gallery from $D$ to $D'$ 
that has the same type as $\dot{\gamma}$. By Proposition~7.7.ii in \cite{me-sph},
it follows that $\dot{\gamma}$ is minimal and hence $C_0$ and $C_8$
are opposite. By Corollaries~8.6 and ~8.9 in \cite{me-sph}, therefore,
$\dot{\gamma}$ is contained in an apartment of $\dot{\Delta}$.
It thus suffices to show that 
every alternating gallery of arbitrary length $k$
in $\Delta$ can be extended to an alternating gallery in $\Delta$ of length $k+1$
and that every two chambers of $\Delta$ are 
joined by an alternating gallery in $\Delta$.

Suppose first that the gallery 
$\gamma=(C_0,C_1,\ldots,C_k)$ is alternating and let $J_i$ for $i\in[1,k]$
be as in the previous paragraph.
Let $J=I\backslash J_k$ and let $R$ be the unique $J$-residue
containing $C_k$. Since $\rho$ fixes $C_k$, the residue $R$ is also 
$\rho$-invariant. The residue $R$ is a generalized $n$-gon for $n=2$
or $4$. Let $\gamma$ be a gallery in $R$ of length $n/2$ starting at $C_k$ and let
$\gamma_1$ be the concatenation of $\gamma^{-1}$ with $\gamma^\rho$.
Then $\gamma_1$ is a minimal galley of length $n$ (because
$\rho$ is not type-preserving) and $\rho$ preserves
$\gamma_1$ (because $\rho^2=1$).
Therefore $\rho$ fixes the unique chamber $C_{k+1}$ opposite $C_k$
in the unique apartment of $R$ containing $\gamma_1$.
Thus $(C_0,\ldots,C_k,C_{k+1})$ is an alternating gallery extending
$\gamma$.

Now suppose that $C$ and $C'$ are two arbitrary chambers of $\Delta$
and let $e$ be the distance between them in $\dot{\Delta}$. 
Since
$\Delta$ is a chamber system (as observed above), 
we can obtain an alternating gallery from $C$ to $C'$ from an arbitrary
gallery from $C$ to $C'$ simply by discarding superfluous chambers.
It will suffice to show, therefore, 
that there is a gallery from $C$ to $C'$ in $\Delta$.
We proceed by induction with respect to $e$. We can suppose that 
$e>0$. Thus we can choose a chamber $C_1$ adjacent to $C'$ that is at distance
$e-1$ from $C$. Let $R$ be the unique $\rho$-invariant
residue of rank two containing both $C$ and $C_1$ and let
$C_2={\rm proj}_R\,C'$. Since $\rho$ fixes $C'$ and $R$,
it fixes $C_2$ too. Thus $C_2$ is a chamber of $\Delta$ adjacent
to $C$ in $\Delta$ and at distance strictly less than $e$ to
$C'$. By induction, we conclude that there is, in fact, a gallery in $\Delta$
from $C$ to $C'$.
\end{proof}

\begin{notation}\label{abc48}
Let $\Delta$, $\Sigma$, $\psi$, $\rho$ as well as $U_\alpha$ and $x_\alpha$ 
be as in \ref{abc6}. Note that the type of $\Delta$
is, according to \ref{abc400}, now $\dot{\Phi}$ rather than $\Phi$.
In order to focus on the set $\Delta^\rho$ defined in \ref{abc7}, we now replace 
also the designations $\Delta$, $\Sigma$, $\psi$, $U_\alpha$ and $x_\alpha$ 
by $\dot{\Delta}$, $\dot{\Sigma}$, $\dot{\psi}$,
$U_{\dot{\alpha}}$ and $x_{\dot{\alpha}}$ (but let $\rho$ remain $\rho$).
We then set $\Delta=\dot{\Delta}^\rho$ 
and $\Sigma=\dot{\Sigma}\cap\Delta$. 
By \ref{abc29} applied
to these data, $\Delta$ has (canonically) the structure of a building of type $\Phi$,
where $\Phi$ is as in \ref{abc98}.ii, and $\Sigma$ is an apartment of $\Delta$.
Let $\pi$ be the map obtained from these data in \ref{abc29}.ii 
and for each $\alpha\in\Phi$, let $\dot{\Phi}_\alpha$ denote the pre-image of
$\alpha$ under the surjection $\dot{\alpha}\mapsto\ddot{\alpha}/|\ddot{\alpha}|$
from $\dot{\Phi}$ to $\Phi$, where the map $\dot{\alpha}\mapsto\ddot{\alpha}$
is as in \ref{abc98}.
\end{notation}

\begin{theorem}\label{abc49}
Let $\dot{\Phi}$, $U_{\dot{\alpha}}$ for $\dot{\alpha}\in\dot{\Phi}$, $\Phi$,
$\dot{\Phi}_\alpha$ for $\alpha\in\Phi$,
$\Delta$, $\Sigma$, $\rho$ and $\pi$ be as in \ref{abc48} and
let $\Phi$ be identified with the set of roots of $\Sigma$ via $\pi$.
For each $\alpha\in\Phi$, let $U_\alpha$ denote the 
centralizer of $\rho$ in the subgroup 
$$\langle U_{\dot{\alpha}}\mid\dot{\alpha}\in\dot{\Phi}_\alpha\rangle$$
of ${\rm Aut}(\dot{\Delta})$. Then $U_\alpha$ acts faithfully on $\Delta$
for each $\alpha\in\Phi$ (in all three cases); $(\Sigma,(U_\alpha)_{\alpha\in\Phi})$
is a Moufang structure on $\Delta$ in cases ${\sf B}$ and ${\sf G}$; and in
case ${\sf F}$, $\Delta$ is Moufang and for each $\alpha\in\Phi$,
$U_\alpha$ is the corresponding root group.
\end{theorem}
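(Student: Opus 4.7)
The plan is to parameterize $U_\alpha$ explicitly from the commutator relations of \ref{abc6} and then verify each conclusion of the theorem from that parameterization. Set $\dot U_\alpha := \langle U_{\dot\alpha} : \dot\alpha \in \dot\Phi_\alpha\rangle$. The roots in $\dot\Phi_\alpha$ all lie in the closed half-space $H_\alpha$, so every interval $(\dot\alpha,\dot\beta)$ with $\dot\alpha,\dot\beta \in \dot\Phi_\alpha$ is again contained in $\dot\Phi_\alpha$; by \ref{abc17} the group $\dot U_\alpha$ therefore admits a unique normal form $\prod_{\dot\alpha\in\dot\Phi_\alpha} x_{\dot\alpha}(t_{\dot\alpha})$ with respect to any linear ordering of $\dot\Phi_\alpha$ compatible with angles, with $t_{\dot\alpha} \in K$ (or $L$ in case ${\sf B}$). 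By the geometry in \ref{abc98}, no root of $\dot\Phi$ lies on the $\tau$-axis in any of our three cases, so $\tau$ acts freely on $\dot\Phi_\alpha$ and partitions it into pairs $\{\dot\alpha,\tau\dot\alpha\}$. Since \ref{abc6}.i gives $x_{\dot\alpha}(t)^\rho = x_{\tau\dot\alpha}(t)$, reordering a $\rho$-fixed product using the commutator relations yields explicit equations among the $t_{\dot\alpha}$ that identify $U_\alpha = C_{\dot U_\alpha}(\rho)$ with a set parameterized by one element of $K$ (respectively $L$) per $\tau$-orbit in $\dot\Phi_\alpha$.

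Next I would check faithfulness. Each $u \in \dot U_\alpha$ fixes pointwise the subcomplex $\bigcap_{\dot\alpha \in \dot\Phi_\alpha} \dot\pi(\dot\alpha)$ of $\dot\Sigma$, and by \ref{abc29}.ii the trace of this subcomplex on $\Sigma$ is exactly the root $\pi(\alpha)$; in particular every $u \in U_\alpha$ fixes $\pi(\alpha)$ pointwise in $\Delta$. The group $\dot U_\alpha$ is a subgroup of the group $\dot G^\dagger$ of \ref{abc13} and hence acts faithfully on $\dot\Delta$, so a non-trivial $u \in U_\alpha$ moves some chamber $\dot C$ of $\dot\Delta$ opposite a chamber of $\bigcap_{\dot\alpha}\dot\pi(\dot\alpha)$. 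Because $u$ commutes with $\rho$ and every such chamber can be attained from $\dot\Sigma$ by conjugating with a suitable $\rho$-invariant element of $U_{-\alpha}$, the chamber $\dot C$ may be chosen in a $\rho$-compatible apartment, hence in $\Delta$, which establishes faithfulness.

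Finally, for the Moufang conditions: in cases ${\sf B}$ and ${\sf G}$ the building $\Delta$ has rank one, and using the parameterization of the first paragraph together with the diagonal action described in \ref{abc78}.iii one shows that $U_\alpha$ acts sharply transitively on $\Delta \setminus \{\alpha\}$ via the map $u \mapsto (-\alpha)^u$; this is the standard Moufang-set description of the Suzuki and Ree groups. The normalization condition \ref{abc10}.ii follows because the elements $m_\Sigma(x_{\dot\alpha}(1))m_\Sigma(x_{\dot\alpha}(u))$ normalize every $U_{\dot\beta}$ by \ref{abc78}, and their $\rho$-invariant combinations therefore normalize $U_\alpha$. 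In case ${\sf F}$ the building $\Delta$ has rank two, and one verifies \ref{abc2} by transferring the transitivity of $\dot U_\alpha$ on $\rho$-compatible apartments of $\dot\Delta$ containing $\bigcap_{\dot\alpha}\dot\pi(\dot\alpha)$ to the transitivity of $U_\alpha$ on apartments of $\Delta$ containing $\pi(\alpha)$, by means of the bijection in \ref{abc29}.iii. The main obstacle is the faithfulness step together with the case-${\sf F}$ verification, where one must pass carefully between the action of $\dot U_\alpha$ on $\dot\Delta$ and the induced action on the fixed subbuilding $\Delta$; the rank-one cases, by contrast, reduce to the well-known Moufang-set structure of the Suzuki and Ree groups.
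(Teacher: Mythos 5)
Your overall strategy---restrict the Moufang structure of $\dot\Delta$ to the fixed-point building $\Delta$---is the right instinct, and the paper pursues that same theme. But the route you take, a global parameterization of $\dot U_\alpha$ followed by a top-down argument, has two genuine gaps where the details do not yet hold together.

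The faithfulness argument is circular. You take a nontrivial $u \in U_\alpha$, observe that it moves some chamber $\dot C$ of $\dot\Delta$, and then try to replace $\dot C$ by a $\rho$-fixed chamber using the claim that ``every such chamber can be attained from $\dot\Sigma$ by conjugating with a suitable $\rho$-invariant element of $U_{-\alpha}$.'' But that claim is precisely the sharp transitivity of the centralizer $C_{\dot U_{-\alpha}}(\rho)$ on the $\rho$-fixed chambers opposite the base chamber---which is the substance of what the theorem is asserting, via the Moufang condition. Nothing proved so far gives it to you. (It is also not true that $u$ commuting with $\rho$ lets you average the orbit of $\dot C$ into a $\rho$-fixed chamber.) The paper circumvents this by working \emph{locally}: it fixes a $\rho$-invariant rank-two residue $\dot R$ of $\dot\Delta$ whose apartment $\dot R\cap\dot\Sigma$ projects to a panel of $\Sigma$, uses Propositions~8.13, 11.10 and Theorem~11.11 of \cite{me-sph} to see that $\dot U_\alpha$ acts \emph{sharply} transitively on the chambers of $\dot R$ opposite $C$, and then observes that sharp transitivity immediately passes to the $\rho$-fixed part (if $q,q'$ are $\rho$-fixed and $uq = q'$ with $u \in \dot U_\alpha$, then $\rho(u)$ also carries $q$ to $q'$, so $\rho(u)=u$). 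Faithfulness on $\Delta$ then follows because these $\rho$-fixed chambers lie in $\Delta$.

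In case ${\sf F}$ you leave out the key containment. Transitivity of $U_\alpha$ on apartments of $\Delta$ containing $\alpha$ does not by itself verify \ref{abc2}: the Moufang condition is a statement about the \emph{intrinsic} root groups of $\Delta$, i.e.\ the groups defined as intersections of chamber stabilizers as in \ref{abc2}. One must first show that $U_\alpha$ is \emph{contained} in that intrinsic root group, and only then does transitivity of $U_\alpha$ give the Moufang property, after which sharp transitivity of the intrinsic root group (Theorem~9.3, Prop.~11.4 of \cite{me-sph}) forces equality. The paper carries out the containment by exhibiting a second $\rho$-invariant rank-two residue $\dot R_1$ corresponding to a panel of $\Delta$ with both chambers inside $\alpha$, checking that each $U_{\dot\alpha}$ with $\dot\alpha \in \dot\Phi_\alpha$ acts trivially on the relevant panel of $\dot R_1$, and then invoking Theorem~9.7 of \cite{me-sph} (a rigidity result) to conclude $U_{\dot\alpha}$ is trivial on all of $\dot R_1$. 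Your sketch does not address this at all, and the ``bijection in \ref{abc29}.iii'' you invoke requires $\rho$-compatibility, which is not preserved by conjugation by individual elements of $\dot U_\alpha$.

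Two smaller points. Your appeal to \ref{abc98} to conclude that $\tau$ acts freely on $\dot\Phi_\alpha$ is misplaced: \ref{abc98}.i says $\ddot\alpha = \dot\alpha + \dot\alpha^\tau \neq 0$, i.e.\ $\dot\alpha^\tau \neq -\dot\alpha$, not $\dot\alpha^\tau \neq \dot\alpha$. The free action is true (because $\tau$ swaps long and short roots), but you need a different reason. And for \ref{abc10}.ii you must check normalization by the full stabilizer of $\alpha$ in $\langle U_\alpha, U_{-\alpha}\rangle$, not only by the ``diagonal'' elements $h(u)$ from \ref{abc78}; the paper avoids this entirely by deducing the Moufang structure directly from sharp transitivity inside the residue rather than verifying \ref{abc10}.ii term by term.
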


\begin{proof}
Let $\dot{\Delta}$ be as in \ref{abc48} and let 
$\alpha\in\Phi$. We think of $\alpha$ as a root of $\Sigma$ and 
choose a panel of $\Sigma$ containing one chamber
$C$ in $\alpha$ and another $C'$ not in $\alpha$. 
Then $C$ and $C'$ are opposite in a unique rank two residue $\dot{R}$ of $\dot{\Delta}$
fixed by $\rho$. By \ref{abc29}.ii, $\dot{\Phi}_\alpha$ consists
of precisely those roots of $\dot{\Sigma}$ that contain
$C$ but not $C'$. By Proposition~8.13 in \cite{me-sph}, 
the map $\dot{\alpha}\mapsto\dot{\alpha}\cap\dot{R}$
is thus a bijection from $\dot{\Phi}_\alpha$ to the set of roots
of the apartment $\Sigma\cap\dot{R}$ of $\dot{R}$ and by Proposition~11.10 in \cite{me-sph},
$U_{\dot{\alpha}}$ induces the root group on $\dot{R}$ corresponding to
the root $\dot{\alpha}\cap\dot{R}$ for each $\dot{\alpha}\in\dot{\Phi}_\alpha$. 
By Theorem~11.11.ii in \cite{me-sph}, therefore, the group
$$\langle U_{\dot{\alpha}}\mid\dot{\alpha}\in\dot{\Phi}_\alpha\rangle$$
acts sharply transitively on the set $Q$ of chambers of $\dot{R}$
which are opposite $C$ in $\dot{R}$. 
Thus the group $U_\alpha$ acts sharply transitively on the
fixed point set $Q^\rho$ of $\rho$ in $Q$. 
It follows that $U_\alpha$ acts faithfully on $\Delta$ and
by Theorem~9.3 in \cite{me-sph}, that $U_\alpha$ acts sharply transitively on the set of
apartments of $\Delta$ containing $\alpha$. 
We conclude that $(U_\alpha)_{\alpha\in\Phi}$
is a Moufang structure on $\Delta$ (as defined in \ref{abc10})
in cases ${\sf B}$ and ${\sf G}$.

Now suppose that we are in case ${\sf F}$ and choose 
a panel of $\Delta$ containing two chambers $C_1$ and $C_1'$ in $\alpha$.
There exists a unique rank two residue $\dot{R}_1$ of $\dot{\Delta}$
fixed by $\rho$ containing $C_1$ and $C_1'$. Let $\dot{P}$
be a panel of $\dot{R}_1$ containing two chambers of $\dot{\Sigma}$.
If $\dot{\alpha}\in\dot{\Phi}_\alpha$, then $\dot{\alpha}$ contains
both $C_1$ and $C_1'$, hence the apartment $\dot{R}_1\cap\dot{\Sigma}$ of 
$\dot{R}_1$ is contained in $\dot{\alpha}$
(since roots are convex) and thus $U_{\dot{\alpha}}$ acts trivially both
on $\dot{R}_1\cap\dot{\Sigma}$ and on $\dot{P}$.
By Theorem~9.7 in \cite{me-sph},
therefore, $U_{\dot\alpha}$ acts trivially on $\dot{R}_1$ for all 
$\dot{\alpha}\in\dot{\Phi}_\alpha$. 
It follows that $U_\alpha$
is contained in the root group of $\Delta$ corresponding to
$\alpha$. Thus $\Delta$ is Moufang since 
$U_\alpha$ acts transitively (in fact, sharply transitively)
on the set of apartments of $\Delta$ containing $\alpha$ and $\alpha$ is
arbitrary. By Theorem~9.3 and Proposition~11.4 in 
\cite{me-sph}, the root group corresponding to $\alpha$ 
also acts sharply transitively on the set of apartments of $\Delta$ containing
$\alpha$. It follows that $U_\alpha$ equals this root group.
\end{proof}

\begin{convention}\label{abc97}
We will sometimes refer to a building $\Delta$ of the sort that appears in \ref{abc49}
as a {\it Suzuki-Ree building}. When we say that $\Delta$ is a Suzuki-Ree building
in cases ${\sf B}$ or ${\sf G}$, we mean that we have in mind the Moufang structure
on $\Delta$ described in \ref{abc49}.
\end{convention}

\begin{remark}\label{abc107}
Let $\Delta_{\sf F}$ and $\dot{\Delta}_{\sf F}$ be the buildings 
called $\Delta$ and $\dot{\Delta}$ in \ref{abc48} in 
case ${\sf F}$ and let $\Delta_{\sf B}$ and $\dot{\Delta}_{\sf B}$ 
denote the buildings called $\Delta$ and $\dot{\Delta}$ in \ref{abc48} in case ${\sf B}$ 
under the assumption that $L=K$, where $L$ is as in \ref{abc33}.
Then there exist residues of rank two of $\dot{\Delta}_{\sf F}$ fixed by 
$\rho$ which are isomorphic to the building $\dot{\Delta}_{\sf B}$.
Let $\dot{R}$ be one of these residues and let $P$
be the corresponding panel of $\Delta_{\sf F}$.
Then $P$ can be identified with the building $\Delta_{\sf B}$ in such a way that
the canonical Moufang structure on $P$ which comes from $\Delta_{\sf F}$
as described in \ref{abc19}
coincides with the Moufang structure on $\Delta_{\sf B}$ 
described in \ref{abc49}. 
\end{remark}

\begin{notation}\label{abc72}
Let $S=L\times L$ in case ${\sf B}$ and let $S=K\times K$ in case  ${\sf F}$.
In both of these two cases, let 
$$(s,t)\cdot(u,v)=(s+u,t+v+s^\theta u)$$
and 
$$R(s,t)=s^{\theta+2}+st+t^\theta$$
for all $(s,t)\in S$. In case ${\sf G}$, let
$T=K\times K\times K$, let 
$$(r,s,t)\cdot(w,u,v)=(r+w,s+u+r^\theta w,t+v-ru+sw-r^{\theta+1}w)$$
for all $(r,s,t),(w,u,v)\in T$ and let  
$$N(r,s,t)=r^{\theta+1}s^\theta-rt^\theta-r^{\theta+3}s-r^2s^2+s^{\theta+1}+t^2-r^{2\theta+4}$$
for all $(r,s,t)\in T$. Then $S$ and $T$ are groups (with multiplication $\cdot$),
$$(s,t)^{-1}=(s,t+s^{\theta+1})$$
for all $(s,t)\in S$ and 
$$(r,s,t)^{-1}=(-r,-s+r^{\theta+1},-t)$$
for all $(r,s,t)\in T$.
We will call $R$ the {\it norm of the group $S$} and $N$ the 
{\it norm of the group $T$}.
The center of $S$ is $\{(0,t)\mid t\in K\text{ (or $t\in L$)}\}$ and
the center of $T$ is $\{(0,0,t)\mid t\in K\}$; both centers are isomorphic
to the additive group of $K$.
\end{notation}

(Note that in case ${\sf B}$, the product
$K^\theta L$ is contained in $L$ (by \ref{abc33}) and thus the products $s^\theta t$,
$R(s,t)^{2-\theta}u$ and $R(s,t)^\theta v$ 
are contained in $L$ for all $(s,t),(u,v)\in S$ even though the norm $R(s,t)$ 
is not necessarily contained in $L$. See \ref{abc55}.i.)

\begin{remark}\label{abc750}
It is shown in \cite{tits-ree} that the maps $R$ and $N$ are anisotropic.
By this we mean that $R(s,t)=0$ only if $(s,t)=0$ and
$N(r,s,t)=0$ only if $(r,s,t)=0$.  
\end{remark}

\begin{proposition}\label{abc55}
Let $\Delta$, $\Phi$, $\Sigma$, the identification of $\Phi$ with the set of roots
of $\Sigma$ and the root groups $U_\alpha$ be as in \ref{abc49}, 
let $G$ be the corresponding Ree or Suzuki group as defined in \ref{abc7} and 
let the groups $S$ and $T$ and their norms $R$ and $N$ be as in \ref{abc72}.
Then there exist $\alpha\in\Phi$ such that the following hold:
\begin{thmenum}
\item In cases ${\sf B}$ and ${\sf F}$, $U_\alpha\cong S$ and
there exists an isomorphism $x_\alpha$ from $S$ to $U_\alpha$ that 
$$x_\alpha(u,v)^{h(s,t)}=x_\alpha(R(s,t)^{2-\theta}u,R(s,t)^\theta v)$$
for all $(u,v)\in S$ and all $(s,t)\in S^*$, where
$$h(s,t)=m_\Sigma(x_\alpha(0,1))m_\Sigma(x_\alpha(s,t)).$$
\item In case ${\sf G}$, $U_\alpha\cong T$ 
and there exists an isomorphism $x_\alpha$ from $T$ to $U_\alpha$ such that
$$x_\alpha(w,u,v)^{h(r,s,t)}
=x_\alpha(N(r,s,t)^{2-\theta}w,N(r,s,t)^{\theta-1}u,N(r,s,t)v)$$
for all $(w,u,v)\in T$ and all $(r,s,t)\in T^*$, where
$$h(r,s,t)=m_\Sigma(x_\alpha(0,0,1))m_\Sigma(x_\alpha(r,s,t)).$$
\end{thmenum}
The element $\alpha$ is unique up to the action of the
stabilizer $G_\Sigma$ of $\Sigma$ in $G$ on the set of roots of $\Sigma$.
\end{proposition}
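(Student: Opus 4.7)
The plan is to exhibit $\alpha$ and the isomorphism $x_\alpha$ explicitly and then verify the displayed conjugation identities by direct calculation in $\dot\Delta$. I would take $\alpha\in\Phi$ to be a root containing the chamber $C$ of \ref{abc6}. By \ref{abc98} combined with the set-up in \ref{abc48}, the set $\dot\Phi_\alpha$ is then precisely the collection of roots of $\dot\Sigma$ containing $C$. Inspecting the angles between these roots: in case~${\sf B}$ one finds four roots forming two $\tau$-orbits (one pair meeting at $45^\circ$ whose root groups commute, one pair meeting at $135^\circ$ whose root groups satisfy the two-term commutator of \ref{abc6}.ii); in case~${\sf F}$ three roots consisting of one $\tau$-fixed root and one $\tau$-orbit of size two; and in case~${\sf G}$ six roots in three $\tau$-orbits, the pairs within them meeting at $30^\circ$, $90^\circ$ and $150^\circ$ respectively.

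Next I would parameterise $U_\alpha$. Write a general element of $\dot U:=\langle U_{\dot\gamma}\mid\dot\gamma\in\dot\Phi_\alpha\rangle$ in a chosen normal form (with factors whose root groups lie in the centre of $\dot U$ placed first). Acting by $\rho$ permutes the factors according to \ref{abc6}.i; returning the result to normal form using the commutator relations in \ref{abc6}.ii and \ref{abc6}.iii and equating with the original element produces a system of equations whose solutions parameterise $U_\alpha=C_{\dot U}(\rho)$. In cases~${\sf B}$ and~${\sf F}$ the solution set is naturally in bijection with $S$, and in case~${\sf G}$ with $T$; this bijection is the sought-after $x_\alpha$. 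Multiplying two such normal-form products and reducing via the same commutator identities yields the Heisenberg-type multiplication of \ref{abc72}: for example, in case~${\sf B}$ the cross term $s^\theta u$ comes out of a single application of the $135^\circ$-commutator formula of \ref{abc6}.ii.

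The action of $h(s,t)$ (and $h(r,s,t)$) is the core of the argument. I would express $m_\Sigma(x_\alpha(s,t))$ as a product of elements of the form $m_{\dot\Sigma}(x_{\dot\gamma}(\cdot))$ in $\dot\Delta$, using that the reflection $s_\alpha$ of $\Sigma$ lifts to the longest element of the subgroup of $W(\dot\Sigma)$ generated by the reflections through the walls of the roots in $\dot\Phi_\alpha$; a reduced expression for this longest element prescribes the order of composition. Each constituent factor of $h(s,t)$ then has the form $m_{\dot\Sigma}(x_{\dot\gamma}(1))\,m_{\dot\Sigma}(x_{\dot\gamma}(u))$, and its conjugation action on any $U_{\dot\beta}$ is the scalar action recorded in \ref{abc78}. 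Conjugating $x_\alpha(u,v)$ factor by factor and keeping the result in normal form via the commutator identities gives back an element of $U_\alpha$ whose two (or three) parameters are multiplied by specific polynomials in $s,t$ (or $r,s,t$). The decisive identity is that these polynomials are precisely the powers of $R(s,t)$, respectively $N(r,s,t)$, predicted by \ref{abc72}.

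The hardest part will be the bookkeeping in this last step: the successive conjugations introduce commutator corrections that must be collected, and the correct power of $R$ or $N$ has to emerge after all cancellations. This is least painful in case~${\sf B}$, where ${\rm char}(K)=2$ kills many signs and the commutator in \ref{abc6}.ii has only two terms; more involved in case~${\sf F}$; and most delicate in case~${\sf G}$, where the four-term commutator of \ref{abc6}.iii with parameters $\epsilon,\epsilon_1,\ldots,\epsilon_4$ must be handled carefully to produce the exact exponents $2-\theta$, $\theta-1$ and $1$ of $N$. Finally, the uniqueness of $\alpha$ up to $G_\Sigma$ follows by applying \ref{abc13} to $\Delta$ with its Moufang structure from \ref{abc49}: any two admissible choices of $\alpha$ differ by an element of $G_\Sigma$, which simultaneously identifies their parameterisations $x_\alpha$.
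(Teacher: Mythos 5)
Your proposal takes a genuinely different route from the paper, and the difference matters: there is a gap in the decisive step.

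For cases ${\sf B}$ and ${\sf F}$ the paper does essentially no computation at all: it cites \cite[33.17]{weiss} for case ${\sf F}$, reduces case ${\sf B}$ with $L=K$ to case ${\sf F}$ via \ref{abc107}, and then passes to general $L$ by restricting scalars. For case ${\sf G}$ the paper does compute, but not by the strategy you propose. It imports the \emph{explicit} formula for the element $\omega\in G$ interchanging $C$ and $0$ from \cite[\S5]{tits-ree}, verifies $\omega^2=1$ and $N(\omega(a))=N(a)^{-1}$ from that formula, then defines $\rho_a:=(x_\alpha(\omega(a))\omega x_\alpha(a'))^{-1}\omega x_\alpha(a)\omega$ and observes (using the rank-one factorization \ref{abc8a} and the normal form of elements in $U_\alpha^*U_{-\alpha}^*U_\alpha^*$) that $\rho_a=m_\Sigma(x_\alpha(a))\omega$. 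Since $\rho_a$ fixes both $C$ and $0$, it acts by a scalar $z$ as in \ref{abc55y}, and $z$ is determined by a single coordinate comparison between $\rho_a(\omega(-a))$ and $-\omega(-\omega(a))$. That comparison is what produces the exponent $2-\theta$; the four-term commutator identity never has to be iterated.

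The gap in your plan is in the step ``express $m_\Sigma(x_\alpha(s,t))$ as a product of elements of the form $m_{\dot\Sigma}(x_{\dot\gamma}(\cdot))$ prescribed by a reduced expression for $w_0$.'' It is true that the relative reflection $s_\alpha$ lifts to the longest element $w_0$ of the parabolic subgroup of $W(\dot\Sigma)$ attached to $\dot\Phi_\alpha$, and it is true abstractly that any element of $N$ inducing $w_0$ factors as $m_{\dot\Sigma}(x_{\dot\gamma_1}(c_1))\cdots m_{\dot\Sigma}(x_{\dot\gamma_k}(c_k))$ along a reduced expression. But the parameters $c_i$ are not determined by the reduced word: they depend on $(s,t)$ (resp.\ $(r,s,t)$) through the $\lambda$- and $\kappa$-maps of the Moufang set $\Delta$, i.e.\ through precisely the rational map $\omega$ that you would be trying to compute. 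Without an independent source for those $c_i$ you cannot pair the factors of $h(s,t)$ into $m_{\dot\Sigma}(x_{\dot\gamma}(1))\,m_{\dot\Sigma}(x_{\dot\gamma}(u))$ blocks and invoke \ref{abc78}; the plan is circular at that point. The paper breaks the circularity by quoting the explicit $\omega$ from Tits. If you want a self-contained computation, the realistic alternative is to first derive the $\mu$-map of the Moufang set from scratch (which is essentially \cite[\S5]{tits-ree} or \cite[33.17]{weiss}), after which the scalar action of $h$ drops out as in \ref{abc78}; the factor-by-factor conjugation with commutator corrections is both harder and unnecessary.

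Two smaller points. Your derivation of the $S$- and $T$-parameterisation by solving the fixed-point equations for $\rho$ in normal form is sound and is what underlies \ref{abc55z}; that part is fine. Your argument for the uniqueness of $\alpha$ via \ref{abc13} is also reasonable and actually more explicit than what the paper writes; the only thing worth adding is that in case ${\sf F}$ the roots of $\Sigma$ alternate in type and $G_\Sigma$ is type-preserving, so transitivity on chambers from \ref{abc13} gives transitivity on the relevant (alternate) roots, which is exactly the uniqueness statement.
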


\begin{proof}
By \cite[33.17]{TW}, (i) holds in case ${\sf F}$. 
By \ref{abc107}, it follows that (i) holds also in case ${\sf B}$
when $K=L$. Simply by restricting
scalars, we conclude that (i) holds in ${\sf B}$ also when $L\ne K$.

Suppose now that we are in case ${\sf G}$. 
Let $C$ and $C_1$ be the two chambers in $\Sigma$ and suppose
that $\alpha=\{C\}$. Let $\dot{\alpha}_i$ be the elements of 
$\dot{\Phi}$ ordered clockwise modulo 12 so that
$\dot{\alpha}_1,\ldots,\dot{\alpha}_6$ are the roots in 
the set $\dot{\Phi}_\alpha$ defined in \ref{abc49} (where $\dot{\Phi}$, $\dot{\alpha}_i$,
etc.~are as in \ref{abc48}).
Let $U_i$ denote the root group $U_{\dot{\alpha}_i}$ 
of $\dot{\Delta}$ for all $i$, let $(x_i)_{i\in[1,6]}$ denote the collection of isomorphisms
$x_i=x_{\dot{\alpha}_i}$ from the additive group of $K$
to $U_i$ which appear in the relations \ref{abc6a}--\ref{abc6c} and let 
$U_+$ denote the subgroup of ${\rm Aut}(\dot{\Delta})$
generated by the root groups $U_i$ for all $i\in[1,6]$.
By \ref{abc6}.i, $x_i(t)^\rho=x_{7-i}(t)$ for all $i\in[1,6]$ and 
by \ref{abc49}, $U_\alpha$ is the centralizer of $\rho$ in $U_+$.
Let 
\begin{equation}\label{abc55z}
x_\alpha(r,s,t)=x_1(r)x_2(r^{\theta+1}-s)x_3(t+rs)x_4(r^{\theta+2}-rs+t)x_5(-s)x_6(r)
\end{equation}
for all $(r,s,t)\in T$. Thus $x_\alpha$ is a map from $T$ to $U_+$.
By \ref{abc6}--\ref{abc6c}, \ref{abc72} and a bit of calculation,
this map is, in fact, an isomorphism from $T$ to $U_\alpha$.

Let $G^\dagger$ be the subgroup of ${\rm Aut}(\dot{\Delta})$ as defined
in \ref{abc13}.
Since the elements in the stabilizer $G^\dagger_{C,C_1}$ 
are type-preserving and fix opposite chambers of 
the apartment $\dot{\Sigma}$,
they are contained in the subgroup $H$ of ${\rm Aut}(\dot{\Delta})$ 
defined in \ref{abc81}. 
If $g\in G^\dagger_{C,C_1}$, then by \ref{abc81}, there exist $z,z_1,z_2\in K^*$ such that 
$x_\alpha(r,s,t)^g=x_\alpha(zr,z_1s,z_2t)$ for all $(r,s,t)\in T$.
Since $x_\alpha$ is an isomorphism,
the map $(r,s,t)\mapsto(zr,z_1s,z_2t)$ is an automorphism of $T$.
It follows that $z_1=z^{\theta+1}$ and $z_2=z^{\theta+2}t$. 
Thus for each element $g$ of $G^\dagger$ fixing the two chambers $C$ and $C_1$
of the apartment $\Sigma$, there exists an element $z\in K^*$ such that
\begin{equation}\label{abc55y}
x_\alpha(r,s,t)^g=x_\alpha(zr,z^{\theta+1}s,z^{\theta+2}t)
\end{equation}
for all $(r,s,t)\in T$. 

Let the group $T$ be identified with the set $\Delta\backslash\{C\}$ 
via the map sending $a\in T$ to the image of $C_1$ under the element $x_\alpha(a)$
of $U_\alpha$. Even though we are using
exponential notation (and, by implication, composition from left to right)
in the claim we are proving, for the remainder of this proof we will think of the group $G$
as acting on the set $C\cup T$ from the left (with composition
from right to left) in order to conform with the notation in \cite{tits-ree} 
from where we borrow the following argument. For the same reason, 
we will also use additive notation for $T$ (only in this proof)
even though it is not an abelian
group; in particular, we let $0$ denote the identity $(0,0,0)$ of $T$. Thus
$\Sigma=\{C,0\}$ and for each $a\in T$, the element $x_\alpha(a)$ of $U_\alpha$
fixes $C$ and induces the map $b\mapsto a+b$ on $T$. 

For each element $a=(r,s,t)\in T^*$, we set
\begin{equation}\label{abc55k}
u(a)=r^2s-rt+s^\theta-r^{\theta+3}
\end{equation}
and 
\begin{equation}\label{abc55m}
v(a)=r^\theta s^\theta-t^\theta+rs^2+st-r^{2\theta+3}.
\end{equation}
(These expressions are taken from \cite[5.3]{tits-ree}, where $N(a)$
is called $w=w(a)$. Note that in \cite[5.3]{tits-ree}, there is an
exponent $\theta$ missing in the second term of $w$ and a minus sign
missing in front of the whole formula for $w$; see Section~2.10 in
\cite{tom}.) As is explained in Section~5 of \cite{tits-ree}, $N(a)\ne0$ if $a\ne0$
and there is an element $\omega$ in $G$ interchanging the two chambers
$C$ and $0$ of $\Sigma$ such that 
\begin{equation}\label{abc55n}
\omega(a)=\big(-v(a)/N(a),-u(a)/N(a),-t/N(a)\big)
\end{equation}
for all $a=(r,s,t)\in T^*$.
By \ref{abc72}, \ref{abc55k}, \ref{abc55m} and \ref{abc55n} (and a bit of calculation), 
we have
\begin{equation}\label{abc55j}
N(\omega(a))=N(a)^{-1}
\end{equation}
and 
\begin{equation}\label{abc55p}
N(-a)=N(a)
\end{equation}
for all $a\in T^*$ (where $-a$ is the inverse of $a$ in $T$).

The element $\omega^2$ fixes $0$ and $C$. 
By \ref{abc55y}, there thus exists $z\in K^*$ such that
\begin{equation}\label{abc55x}
\omega^2(r,s,t)=(zr,z^{\theta+1}s,z^{\theta+2}t)
\end{equation}
for all $(r,s,t)\in T$. Let $v=z^{\theta+2}$. Then 
$$(0,0,v)=\omega^2(0,0,1)=\omega(1,0,-1)=(0,0,1)$$
by \ref{abc55n} and therefore $z=v^{2-\theta}=1$. We conclude that 
\begin{equation}\label{abc55o}
\omega^2=1
\end{equation}
by \ref{abc55x}.

Now let $a=(r,s,t)\in T^*$ and set $a'=\omega(-\omega(a))$. (This makes sense since
$\omega$ maps $T^*$ to itself.)
By \ref{abc55o}, the two products $\omega x_\alpha(a)\omega$ and
$x_\alpha(\omega(a))\omega x_\alpha(a')$ both map $0$ to $0$ and $C$ to 
$\omega(a)$. Thus 
\begin{equation}\label{abc55a}
\rho_a:=\big(x_\alpha(\omega(a))\omega x_\alpha(a')\big)^{-1}\omega x_\alpha(a)\omega
\end{equation}
fixes both $C$ and $0$. By \ref{abc55o} and \ref{abc55a}, we have
\begin{equation}\label{abc55h}
\rho_a(\omega(-a))=-a'=-\omega(-\omega(a)).
\end{equation}

Let $\xi=\rho_a\omega$. Thus $\xi$ is an element of $G$ 
interchanging $C$ and $0$. By \ref{abc55o} and \ref{abc55a}, we have
$$\xi=x_\alpha(-a')\cdot\omega x_\alpha(-\omega(a))\omega\cdot
x_\alpha(a)\in U_{\alpha}^*\cdot U_{-\alpha}^*\cdot U_\alpha^*.$$
By \ref{abc8a}.i, therefore,
$$\xi=m_\Sigma(\omega x_\alpha(-\omega(a))\omega)$$
and 
$$x_\alpha(a)=\lambda(\omega x_\alpha(-\omega(a))\omega).$$
Hence by \ref{abc8a}.iii, $\xi=m_\Sigma(x_\alpha(a))$. Therefore
\begin{equation}\label{abc55i}
\rho_a=m_\Sigma(x_\alpha(a))\omega.
\end{equation}

By \ref{abc55y}, there exists $z\in K^*$ such that 
\begin{equation}\label{abc55u}
\rho_a(w,u,v)=(zw,z^{\theta+1}u,z^{\theta+2}v)
\end{equation}
for all $(w,u,v)\in T$. 
By \ref{abc55j} and \ref{abc55p}, we have $N(-\omega(a))=N(a)^{-1}$.
By \ref{abc55n} and \ref{abc55u}, therefore,
the third coordinate of $-\omega(-\omega(a))$ is $t$, whereas the 
third coordinate of $\rho_a(\omega(-a))$ is $tz^{\theta+2}/N(a)$. 
By \ref{abc55h}, it follows that
$z^{\theta+2}=N(a)$ and hence 
\begin{equation}\label{abc55q}
z=N(a)^{2-\theta}
\end{equation}
if $t\ne0$. If $t=0$, we obtain the same conclusion by comparing the first or 
second coordinates
of both sides of the identity \ref{abc55h}; we leave these calculations
to the reader.
By \ref{abc55u} and \ref{abc55q}, finally, we have
$\rho_a=1$ if $a=(0,0,1)$. By \ref{abc55o} and \ref{abc55i}, it follows
that $\omega=m_\Sigma(0,0,1)$.
Thus (ii) holds by \ref{abc55i}, \ref{abc55u} and \ref{abc55q}. (Note that
in (ii), $x_\alpha(w,u,v)^{h(r,s,t)}$ is to be interpreted as 
$x_\alpha(w,u,v)$ conjugated first by 
$m_\Sigma(x_\alpha(0,0,1))$ and then by $m_\Sigma(x_\alpha(r,s,t))$, whereas
in the proof $\rho_a(w,u,v)$ is to be interpreted as the image of $(w,u,v)$ under
$m_\Sigma(x_\alpha(0,0,1))$ to which then the map $m_\Sigma(x_\alpha(r,s,t))$
is applied.)
\end{proof}

\section{Bruhat-Tits spaces for the Ree and Suzuki groups}

We begin this section with a result which explains why a valuation of the root
datum of a Suzuki-Ree building defined over a pair $(K,\theta)$ (or triple
$(K,\theta,L)$) requires the existence of a $\theta$-invariant valuation of $K$.
We then formulate our most important result in \ref{abc93}.

\begin{theorem}\label{abc90}
Let $\Delta$, $\Sigma$, $\Phi$, $\alpha$, $x_\alpha$, etc.~be as in \ref{abc48} and \ref{abc55},
let $w=x_\alpha(0,1)$ in cases ${\sf B}$ and ${\sf F}$ and let $w=x_\alpha(0,0,1)$
in case ${\sf G}$, 
let $\psi$ be a valuation of the root datum of $\Delta$ based at $\Sigma$
and let 
$$\varphi=\psi-\psi_\alpha(w)\alpha/(\alpha\cdot\alpha)$$ 
(as defined in \ref{abc30}).
(Thus $\varphi$ is a valuation equipollent to $\psi$ such that $\varphi_\alpha(w)=0$.)
Then there exists a unique $\theta$-invariant valuation $\nu$
of $K$, which depends only on the equipollence class of $\psi$, such that
\begin{equation}\label{abc90a}
\varphi_\alpha(x_\alpha(s,t))=\nu\big(R(s,t)\big)
\end{equation}
for all $(s,t)\in S^*$ in cases ${\sf B}$ and ${\sf F}$ and
\begin{equation}\label{abc90b}
\varphi_\alpha(x_\alpha(r,s,t))=\nu\big(N(r,s,t)\big)
\end{equation}
for all $(r,s,t)\in T^*$ in case ${\sf G}$.
\end{theorem}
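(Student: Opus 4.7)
The strategy is to exploit the explicit conjugation action of $h(s,t) = m_\Sigma(w) m_\Sigma(x_\alpha(s,t))$ on $U_\alpha$, described in \ref{abc55}, together with \ref{abc40}, in order to pull $\varphi_\alpha$ back to a $\theta$-invariant valuation of $K$. Setting $\varphi := \psi - \psi_\alpha(w)\alpha/(\alpha \cdot \alpha)$ produces a valuation equipollent to $\psi$ (as in \ref{abc30}) with $\varphi_\alpha(w) = 0$; by \ref{abc84} together with the form of the equipollence relation, $\varphi_\alpha$ itself depends only on the equipollence class of $\psi$.

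Because $\varphi_\alpha(w) = 0$, \ref{abc40} applies with $m_0 := m_\Sigma(w)$, giving $\varphi_\alpha(g^{m_0 m_\Sigma(u)}) - \varphi_\alpha(g) = 2\varphi_\alpha(u)$ for all $g, u \in U_\alpha^*$. Taking $g$ to be a central element of $U_\alpha$, namely $x_\alpha(0, k)$ in cases B and F and $x_\alpha(0,0,k)$ in case G, and $u = x_\alpha(s,t)$ or $x_\alpha(r,s,t)$ respectively, and then substituting the conjugation formula from \ref{abc55}, yields the key identity
\begin{equation*}
\varphi_\alpha\bigl(x_\alpha(0, R(s,t)^\theta k)\bigr) - \varphi_\alpha\bigl(x_\alpha(0, k)\bigr) = 2\varphi_\alpha(x_\alpha(s,t))
\end{equation*}
in cases B and F, with the analogous identity using $N$ in place of $R^\theta$ in case G.

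Next, define $\nu$ on the center coordinate by $\nu(k) := \varphi_\alpha(x_\alpha(0,k))/\sqrt{p}$ in cases B, F and $\nu(k) := \varphi_\alpha(x_\alpha(0,0,k))/2$ in case G. Setting $k = 1$ in the key identity shows $\varphi_\alpha(x_\alpha(s,t))$ depends on $(s,t)$ only through $R(s,t)$, so that $\varphi_\alpha(x_\alpha(s,t)) = \nu(R(s,t))$ on the image of $R$ (and analogously for $N$). The ultrametric inequality for $\nu$ follows from axiom V1 applied to the central subgroup of $U_\alpha$; multiplicativity of $\nu$ on products $a^p \cdot k$ follows from specializing $(s,t) = (0,a)$, since $R(0,a)^\theta = a^p$; and chaining with further specializations such as $(s,t) = (a,0)$, using $R(a,0)^\theta = a^p(a^\theta)^2$, forces full multiplicativity and the $\theta$-invariance $\nu(k^\theta) = \sqrt{p}\,\nu(k)$. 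In case G the analogous specializations $(r,s,t) = (0,0,a)$, $(0,a,0)$, $(a,0,0)$ with $N(0,0,a) = a^2$, $N(0,a,0) = a^{\theta+1}$ and $N(a,0,0) = -a^{2\theta+4}$ play the same role.

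In case B the valuation is initially defined only on $L^*$ and must be extended to $K^*$, using $K = \langle L\rangle$ and the established multiplicativity; the extension is unique and remains $\theta$-invariant. This extension, together with the chained specializations needed to establish multiplicativity and $\theta$-invariance simultaneously, is the main obstacle: since neither $R$ nor $N$ is itself multiplicative, one must carefully combine many instances of the key identity, invoking the anisotropy of the norms (\ref{abc750}) to ensure well-definedness, in order to conclude that $\nu$ is a genuine $\theta$-invariant valuation of $K$. Uniqueness of $\nu$ then follows from its determination on the center coordinates via the definition.
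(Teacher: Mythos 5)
Your overall strategy is the same as the paper's: exploit \ref{abc40} with $m_0 = m_\Sigma(w)$ together with the explicit action of $h(s,t)$ (resp.\ $h(r,s,t)$) on $U_\alpha$ from \ref{abc55} to read off $\nu$ on the center of $U_\alpha$, then verify the axioms of a valuation. In case ${\sf G}$ your definition $\nu(k)=\varphi_\alpha(x_\alpha(0,0,k))/2$ agrees with the paper's and \ref{abc90b} drops out of the $k=1$ identity. However, there are two genuine gaps in cases ${\sf B}$ and ${\sf F}$, and one of them also affects your argument in case ${\sf G}$.

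First, the normalization $\nu(k):=\varphi_\alpha(x_\alpha(0,k))/\sqrt{p}$ is circular. The $k=1$ identity gives $\varphi_\alpha(x_\alpha(s,t))=\tfrac12\varphi_\alpha\bigl(x_\alpha(0,R(s,t)^\theta)\bigr)$, which with your $\nu$ reads $\varphi_\alpha(x_\alpha(s,t))=\tfrac{\sqrt{p}}{2}\nu(R(s,t)^\theta)$. For this to equal $\nu(R(s,t))$ you need $\nu(R^\theta)=\sqrt{p}\,\nu(R)$ — i.e.\ $\theta$-invariance — which you have not yet established at that point, and which is the hardest part of the theorem. The paper avoids this by defining $\nu(t):=\varphi_\alpha(x_\alpha(0,t^\theta))/2$ in cases ${\sf B}$ and ${\sf F}$, which makes \ref{abc90a} an immediate restatement of the $k=1$ identity; as a bonus, $t^\theta\in F\subset L$ for all $t\in K$, so in case ${\sf B}$ the map $\nu$ is already defined on all of $K^*$ and no extension argument from $L^*$ to $K^*$ is needed.

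Second, and more seriously, the "chaining with further specializations'' cannot by itself yield $\theta$-invariance. All the specializations you list feed into the key identity and its $k=1$ consequence, and those consequences are equally consistent with any field valuation $\nu$ of $K$, $\theta$-invariant or not: one can take an arbitrary valuation $\nu$, set $\varphi_\alpha(x_\alpha(s,t))$ proportional to $\nu(R(s,t)^\theta)$, and all your identities hold. What pins down $\theta$-invariance is (V1) applied to a \emph{non-central} product in $U_\alpha$. Concretely the paper uses $(1,0)\cdot(0,u^\theta)=(1,u^\theta)$ in $S$ (resp.\ $(1,0,0)\cdot(0,0,u)=(1,0,u)$ in $T$), which via (V1) and the $k=1$ identity gives $\nu(1+u^\theta+u^2)\ge\min\{0,2\nu(u)\}$ (resp.\ $\nu(u^2-u^\theta-1)\ge\min\{0,2\nu(u)\}$), and then the ultrametric property forces $\nu(u^\theta)\ge0$ whenever $\nu(u)\ge0$. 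Your proposal only invokes (V1) "applied to the central subgroup,'' which gives the ultrametric inequality but not this crucial comparison. The anisotropy of $R$ and $N$ that you invoke is irrelevant to this issue — it is not needed for well-definedness here, since $\nu$ is defined on the center directly, not on the image of the norm.
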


\begin{proof}
Let 
$$\nu(t)=\varphi_\alpha(x_\alpha(0,t^\theta))/2$$
for all $t\in K^*$ in cases ${\sf B}$ and ${\sf F}$ and let
$$\nu(t)=\varphi_\alpha(x_\alpha(0,0,t))/2$$
for all $t\in K^*$ in case ${\sf G}$.
Then \ref{abc90a} and \ref{abc90b} hold 
by \ref{abc40} with $g=w$ and $u=x_\alpha(s,t)$ or $x_\alpha(r,s,t)$ and \ref{abc55}.
It thus need only to show that $\nu$ is a $\theta$-invariant valuation of $K$.

Let $\nu(0)=\infty$. 
By (V1), we have $\nu(s+t)\ge\min\{\nu(s),\nu(t)\}$ for all $s,t\in K$.
By \ref{abc55} again, we have
$$x_\alpha(0,s^\theta)^{m_\Sigma(w)m_\Sigma(x_\alpha(0,t^\theta))}
=x_\alpha(0,s^\theta t^{2\theta})$$ 
for all $s,t\in K^*$ in cases ${\sf B}$ and ${\sf F}$ and
$$x_\alpha(0,0,s)^{m_\Sigma(w)m_\Sigma(x_\alpha(0,0,t))}=x_\alpha(0,0,st^2)$$
for all $s,t\in K^*$ in case ${\sf G}$.
By \ref{abc40} again, this time with $g=x_\alpha(0,t^\theta)$ or $x_\alpha(0,0,t)$
and $u=x_\alpha(0,s^\theta)$ or $x_\alpha(0,0,s)$, it follows that
$$\nu(s^2t)=\nu(t)+2\nu(s)$$
for all $s,t\in K^*$ in all three cases.
>From this identity, we thus obtain
$$\nu(s^2t^2)=\nu(t^2)+2\nu(s)$$
for all $s,t\in K^*$ and (setting $t=1$)
$$\nu(s^2)=2\nu(s)$$
for all $s\in K^*$. Therefore 
$$\nu(st)=\nu(s)+\nu(t)$$
for all $s,t\in K^*$. 
Since $|\varphi_\alpha(U_\alpha^*)|>1$ by \ref{abc14}, it follows from
\ref{abc90a} and \ref{abc90b} that $|\nu(K^*)|>1$. 
Thus $\nu$ is a valuation of $K$.

It remains only to show that $\nu(u)\ge0$ implies that $\nu(u^\theta)\ge0$.
Let $u$ be an element of $K^*$ such that $\nu(u)\ge0$ (and hence $\nu(u^2)=2\nu(u)\ge0$).
Suppose first that we are in case ${\sf B}$ or ${\sf F}$, so 
$$(1,0)\cdot(0,u^\theta)=(1,u^\theta)$$
in $S$. By (V1), therefore,
$$\varphi_\alpha(x_\alpha(1,u^\theta))\ge\min\{\varphi_\alpha(x_\alpha(1,0)),
\varphi_\alpha(x_\alpha(0,u^\theta))\}.$$
Hence by \ref{abc90a},
$$\nu(1+u^\theta+u^2)\ge\min\{0,\nu(u^2)\}=0.$$
It follows that $\nu(u^\theta)\ge0$. 

Suppose now that we are in case ${\sf G}$, so 
$$(1,0,0)\cdot(0,0,u)=(1,0,u)$$
in $T$. By (V1), therefore,
$$\varphi_\alpha(x_\alpha(1,0,u))\ge\min\{\varphi_\alpha(x_\alpha(1,0,0)),
\varphi_\alpha(x_\alpha(0,0,u))\}.$$
Hence by \ref{abc90b},
$$\nu(u^2-u^\theta-1)\ge\min\{0,\nu(u^2)\}=0.$$
Again we conclude that $\nu(u^\theta)\ge0$.
\end{proof}

The converse of \ref{abc90} is also valid:

\begin{theorem}\label{abc91}
Let $\Delta$, $\Sigma$, $\Phi$, $\alpha$, $x_\alpha$, etc.~be as in 
\ref{abc48} and \ref{abc55}.
Suppose that $\nu$ is a $\theta$-invariant valuation of $K$ and let 
$\varphi_\alpha$ be the map given in \ref{abc90a} (in cases ${\sf B}$
and ${\sf F}$) or \ref{abc90b} (in case ${\sf G}$). Then $\varphi_\alpha$
extends to a valuation of the root datum of $\Delta$ based at $\Sigma$.
\end{theorem}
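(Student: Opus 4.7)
The plan is to construct the full valuation $\varphi = (\varphi_\beta)_{\beta \in \Phi}$ extending $\varphi_\alpha$ by descending from the ambient valuation on $\dot\Delta$ furnished by \ref{abc80}, and then to verify the three axioms of \ref{abc14} directly.

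First I apply \ref{abc80}.i to $\nu$ to obtain a valuation $\dot\varphi = (\dot\varphi_{\dot\gamma})_{\dot\gamma \in \dot\Phi}$ of the root datum of $\dot\Delta$ based at $\dot\Sigma$; by \ref{abc80}.ii, $\dot\varphi$ is $\rho$-invariant since $\nu$ is $\theta$-invariant. For each $\beta \in \Phi$, I then define $\varphi_\beta$ on $U_\beta^*$ by transport. In cases ${\sf B}$ and ${\sf G}$ (where $\Phi = {\sf A}_1$), only $\varphi_{-\alpha}$ must be supplied; the natural choice is $\varphi_{-\alpha}(v) := \varphi_\alpha(v^{m_\Sigma(w)^{-1}})$ with $w = x_\alpha(0,1)$ or $w = x_\alpha(0,0,1)$, so that $\varphi_\alpha(w) = 0$. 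In case ${\sf F}$, where the Weyl group of $\Sigma$ has two orbits on the sixteen roots of ${\sf I}_2(8)$ (corresponding to the two root lengths), I would use \ref{abc55}'s uniqueness-up-to-$G_\Sigma$ clause to select a representative $\alpha'$ of the second orbit, exhibit an analogous norm-style formula for $\varphi_{\alpha'}$, and extend $\varphi_\beta$ to all remaining roots by a consistent choice of transporting element from $G^\dagger_\Sigma$.

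For the axioms, (V1) reduces by conjugation to the case $\beta = \alpha$. Expanding $R((s_1,t_1)\cdot(s_2,t_2))$ using the group law of $S$ in \ref{abc72} yields $R(s_1,t_1) + R(s_2,t_2)$ plus explicit cross terms (in characteristic $2$); the non-archimedean property of $\nu$, the $\theta$-invariance, and the anisotropy of $R$ (\ref{abc750}) control these cross terms, so $U_{\alpha,k}$ is a subgroup. The analogous calculation works in case ${\sf G}$ with $N$ in place of $R$. Axiom (V3) follows from \ref{abc40} applied with $g = w$, combined with the conjugation formulas \ref{abc55}.i--ii and the multiplicativity of $\nu$; this yields both the independence of $g$ and the constant $-2\varphi_\alpha(u)$ when $\alpha = \beta$, using the identity $\varphi_\alpha(u^{-1}) = \varphi_\alpha(u)$ (which holds because $R$ and $N$ are invariant under inversion in $S$ and $T$).

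The main obstacle is axiom (V2), which is nontrivial only in case ${\sf F}$ (it is vacuous for rank-one $\Phi$). I would deduce it from the ambient (V2) satisfied by $\dot\varphi$: writing $u \in U_\beta$ as a product of ambient root-group elements $x_{\dot\gamma}(t_{\dot\gamma})$ with $\dot\gamma \in \dot\Phi_\beta$ via the parametrizations implicit in \ref{abc55}, the commutator $[U_{\beta,k}, U_{\gamma,l}]$ unpacks through the ambient commutator relations \ref{abc6}.ii into products of ambient-root-group commutators whose filtration levels are controlled by $\dot\varphi$. The delicate point is matching the coefficients $p_i, q_i$ from \ref{abc16b} for intervals in $\Phi = {\sf I}_2(8)$ with the projection coefficients arising from $\dot\alpha \mapsto \ddot\alpha / |\ddot\alpha|$ of \ref{abc98}, linking the geometries of the two root systems.
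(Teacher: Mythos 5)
Your proposal takes a genuinely different route from the paper. The paper explicitly declines to prove \ref{abc91} directly: immediately after the statement it notes that the ``principal difficulty'' is to establish the inequalities \ref{abc744} and \ref{abc744a} (the ultrametric inequality for $\nu\circ R$ and $\nu\circ N$, needed for (V1)), and then instead deduces \ref{abc91} as a corollary of the stronger geometric result \ref{abc93}. There the Bruhat--Tits space $(X,\mathcal A)$ is built as the fixed-point set of $\dot\rho$ inside $(\dot X,\dot{\mathcal A})$, the axioms (A1)--(A6) of \ref{abc1} are verified for $(X,\mathcal A)$, and then \ref{abc20}/\ref{abc23} automatically furnish a valuation of the root datum of $\Delta$, which is matched up with $\varphi_\alpha$ via \ref{abc101}. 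Your plan is a direct algebraic verification of (V1)--(V3) for a candidate family $(\varphi_\beta)_{\beta\in\Phi}$. The paper's route buys more (it also establishes \ref{abc93}, which is needed for the embedding statement); your route, if carried out, would be shorter and more self-contained but would not give the geometric embedding.

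However, as written your proposal has real gaps. Most seriously, you describe (V2) as ``the main obstacle'' and treat (V1) as essentially routine (``the non-archimedean property of $\nu$, the $\theta$-invariance, and the anisotropy of $R$ control these cross terms''), whereas the paper identifies (V1) --- precisely the inequalities \ref{abc744} and \ref{abc744a} --- as the principal difficulty and devotes the Appendix (Section 9) to it. Your cross-term argument is not a proof: after expanding $R((s,t)\cdot(u,v))$ in characteristic $2$ one gets $R(s,t)+R(u,v)+sv+s^{\theta+1}u+ut$, and bounding $\nu$ of the cross terms below by $\min\{\nu(R(s,t)),\nu(R(u,v))\}$ requires knowing that $\nu(R(s,t))$ equals (not merely is bounded above by) $\min\{(\sqrt2+2)\nu(s),\ \sqrt2\,\nu(t)\}$. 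This is the content of the analog of Lemma~\ref{abc745}, whose proof rescales to put the minimum at $0$ and then invokes anisotropy of the reduction $\bar R$ over the residue field (the analog of Lemma~\ref{abc746}); ``anisotropy of $R$'' over $K$ alone does not suffice. Second, your treatment of (V2) in case ${\sf F}$ is only a plan: you state that the delicate point is matching the projection coefficients from $\ddot\alpha/|\ddot\alpha|$ with the $p_i,q_i$ of \ref{abc16b}, and then stop. That matching, together with the bookkeeping of the $\dot\varphi$-levels of the individual factors $x_{\dot\gamma}(t_{\dot\gamma})$ in the parametrization of $U_\beta$, is exactly what would need to be done, and it is nontrivial (the ambient system is ${\sf F}_4$ and the fixed system is ${\sf I}_2(8)$, which is not crystallographic). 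So the approach is viable in principle --- the paper itself says a direct proof ``would not be hard'' --- but as submitted the proposal leaves both the key ultrametric inequality and the octagon case of (V2) unproved.
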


\medskip
It would not be hard to prove this result directly.
The principal difficulty is to show that
\begin{equation}\label{abc744}
\nu\big(R\big((s,t)\cdot(u,v)\big)\big)\ge\min\big\{\nu\big(R(s,t)\big),\nu\big(R(u,v)\big)\big\}
\end{equation}
for all $(s,t),(u,v)\in S$ and
\begin{equation}\label{abc744a}
\nu\big(N\big((r,s,t)\cdot(w,u,v)\big)\big)\ge
\min\big\{\nu\big(N(r,s,t)\big),\nu\big(N(w,u,v)\big)\big\}
\end{equation}
for all $(r,s,t),(w,u,v)\in T$. These inequalities are required to verify (V1). 

Rather than prove \ref{abc91} directly, however,
we will prove a stronger result (\ref{abc92}--\ref{abc93})
which will have \ref{abc91} (and thus also the two inequalities \ref{abc744}
and \ref{abc744a}) as corollaries. 
In Section~9 we include a direct proof of the inequalities
\ref{abc744} and \ref{abc744a} only because it might be of 
some independent interest. See also 9.1.10 in \cite{bruh-tits}.

\begin{notation}\label{abc92}
Let $\dot{\Phi}$, $\dot{\mathbb A}$, ${\mathbb A}$ and $\tau$ be as in \ref{abc400},
let $\dot{\Delta}$, $\Delta$, $\dot{\Sigma}$ and $\Sigma$ be as in \ref{abc48}, let 
$\dot{W}$ be the Weyl group of $\dot{\Phi}$,
let $W$ be the restriction 
of the centralizer of $\tau$ in $\dot{W}$ to the subspace ${\mathbb A}$ and
let ${\mathbb W}$ be the group of all isometries of ${\mathbb A}$ generated
by $W$ and all translations of ${\mathbb A}$. Let 
$\nu$ be a $\theta$-invariant valuation of $K$, let $\rho$ be the automorphism of 
$\dot{\Delta}$ described in \ref{abc6}.i, let $\dot{\Phi}$ be identified 
with the set of roots of $\dot{\Sigma}$ via the map called $\dot{\pi}$ in 
\ref{abc29} and let $\dot{\varphi}$ denote the
$\rho$-invariant valuation of the root datum of $\dot{\Delta}$ based at $\dot{\Sigma}$
determined by $\nu$ as described in \ref{abc80}. Let $(\dot{X},\dot{\mathcal A})$ 
be the Bruhat-Tits space of type $\dot{\Phi}$, $\dot{A}$ the apartment of 
$(\dot{X},\dot{\mathcal A})$ and $x_A$ the point of $\dot{A}$ obtained by applying
\ref{abc31} to $\dot{\Delta}$, $\dot{\Sigma}$ and $\dot{\varphi}$. The 
pair $(\dot{X},\dot{\mathcal A}_\tau)$ defined as in \ref{abc5} can also be thought of
as the Bruhat-Tits space of type $\dot{\Phi}$
obtained by applying \ref{abc31} to $\dot{\Delta}$, $\dot{\Sigma}$ and $\dot{\varphi}$,
but only after identifying $\dot{\Phi}$ with the set of roots of $\dot{\Sigma}$ via
$\dot{\pi}\circ\tau$ rather than $\dot{\pi}$.
By the uniqueness assertion in \ref{abc31}, there exists a $\tau$-automorphism
$\dot{\rho}$ of $(\dot{X},\dot{\mathcal A})$ (as defined in \ref{abc5})
that induces the automorphism $\rho$ on 
$\dot{\Delta}$ and maps the pair $(\dot{A},x_A)$ to itself. This map satisfies
\begin{equation}\label{abc92b}
\dot{\rho}\circ\dot{f}_{\dot{A},x_A}=\dot{f}_{\dot{A},x_A}\circ\tau,
\end{equation}
where $\dot{f}_{\dot{A},x_A}\in\dot{\mathcal A}$ is as defined in \ref{abc64}.
Let $\dot{\mathcal A}_\rho$ denote the set of charts $\dot{f}\in\dot{\mathcal A}$ such that
$\dot{\rho}$ maps 
the apartment $\dot{f}(\dot{\mathbb A})$ to itself and acts trivially on 
$\dot{f}({\mathbb A})$, let 
$${\mathcal A}=\{\dot{f}|_{\mathbb A}\mid\dot{f}\in\dot{\mathcal A}_\rho\},$$
and let
\begin{equation}\label{abc92a}
X=\bigcup_{\dot{f}\in\dot{\mathcal A}_\rho}\dot{f}({\mathbb A}).
\end{equation}
Thus $X$ is contained in the fixed point set $\dot{X}^{\dot{\rho}}$
of $\dot{\rho}$. 
By \ref{abc92b}, $\dot{f}_{\dot{A},x_A}\in\dot{\mathcal A}_\rho$ and hence
\begin{equation}\label{abc92c}
A:=\dot{f}_{\dot{A},x_A}({\mathbb A})
\end{equation}
is a subset of $X$ and $x_A=\dot{f}_{\dot{A},x_A}(0)$ is a point of $A$. 
\end{notation}

Here now is our main result:

\begin{theorem}\label{abc93}
Let $\Phi$, $\nu$, $(X,{\mathcal A})$, $A$ and $x_A$ be as in \ref{abc92}.
Thus, in particular, $\nu$ is a $\theta$-invariant valuation of $K$.
Then the following hold:
\begin{thmenum}
\item The pair $(X,{\mathcal A})$ is a Bruhat-Tits space of type $\Phi$ 
whose building at infinity
is $\Delta$ (in the sense of \ref{abc24} if the rank of $\Delta$ is one) and 
$A$ is an apartment of $(X,{\mathcal A})$.
\item Let $\alpha\in\Phi$ be as in \ref{abc55} 
and let $\varphi$ be the valuation of the root datum of $\Delta$
based at $\Sigma$ that appears in \ref{abc20}.ii
when \ref{abc20} (and then \ref{abc23})
is applied to the triple $(X,{\mathcal A})$, $A$ and $x_A$.
Then there exists a valuation $\nu_1$ equivalent to $\nu$ such that
$\varphi_\alpha$ satisfies \ref{abc90a} or \ref{abc90b} with $\nu_1$
in place of $\nu$.
\end{thmenum}
\end{theorem}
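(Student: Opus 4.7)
The plan is to verify that $(X,\mathcal A)$ satisfies the six axioms of \ref{abc1}, identify its building at infinity with $\Delta$, and then compute $\varphi_\alpha(x_\alpha(\cdots))$ directly from the ambient data and match it against the formula in \ref{abc90} for a suitable $\nu_1$.

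Axioms A1 and A6 are essentially formal. For A1, every element of $\mathbb W$ lifts to an element of $\dot{\mathbb W}$ centralizing $\tau$ (lift the generating reflections of $W$, and embed ${\mathbb A}$-translations as the corresponding $\dot{\mathbb A}$-translations, which commute with $\tau$ because $\tau$ fixes ${\mathbb A}$ pointwise); post-composition of a chart in $\dot{\mathcal A}_\rho$ by such a lift stays in $\dot{\mathcal A}_\rho$, whose restriction gives the desired chart. For A6, the ambient metric on $\dot X$ restricts to $X\subset\dot X^{\dot\rho}$ and is Euclidean along every apartment of $\mathcal A$, since each such apartment is the ${\mathbb A}$-restriction of a Euclidean apartment in $\dot{\mathcal A}$. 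Axiom A2 follows by intersecting with ${\mathbb A}$ the corresponding $\dot M$-set for $\dot f,\dot f'\in\dot{\mathcal A}_\rho$, and by restricting (after a suitable $\tau$-averaging) the $\dot w\in\dot{\mathbb W}$ matching $\dot f$ and $\dot f'$ on $\dot M$ to the desired $w\in\mathbb W$. The substantive axioms A3, A4 and A5 reduce to producing, for any finite configuration of points, sectors or roots of $X$, an apartment in $\dot{\mathcal A}_\rho$ containing it; I would deduce this from the CAT(0) character of $\dot X^{\dot\rho}$ combined with either a Bruhat-Tits fixed-point argument on the space of ambient apartments through the configuration, or an inductive residue-by-residue construction invoking \ref{abc29}.

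Once the axioms are in place, $(X,\mathcal A)^\infty$ is identified with $\Delta$ via \ref{abc98}.iv: parallel classes of faces of apartments of $\mathcal A$ correspond to $\tau$-invariant faces of $\dot\Phi$, hence by \ref{abc29} to faces of $\dot\Delta^\rho=\Delta$. By \ref{abc49} the root groups $U_\alpha$ of $\Delta$ act on $(X,\mathcal A)$ by restriction of their ambient action, giving $(X,\mathcal A)$ the structure of a Bruhat-Tits space and completing (i). For (ii), I would compute $\mathrm{Fix}_A(x_\alpha(r,s,t))$ directly in case ${\sf G}$ (the other cases are analogous). Using the product expression \ref{abc55z}, each factor $x_{\dot\alpha_i}(\cdot)$ fixes in $\dot A$ a half-space of the form $\dot f_{\dot A,x_A}(H_{\dot\alpha_i,c_i})$ where $c_i$ is the value of $\dot\varphi_{\dot\alpha_i}$ on that factor, prescribed by \ref{abc80}. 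Intersecting these half-spaces with ${\mathbb A}$ via \ref{abc98}.iii converts each into an $\alpha$-half-space with a constant rescaled by a factor depending on $\dot\alpha_i$, and the final intersection is a single half-space $f(H_{\alpha,c})$ whose constant $c$ is an explicit expression in $\nu(r),\nu(s),\nu(t)$. Since $\varphi$ is the valuation of the root datum of $\Delta$ so obtained and, by \ref{abc90} applied to $\psi=\varphi$, the normalized valuation equipollent to $\varphi$ must take the form $\nu_1(N(\cdot))$ for some $\theta$-invariant valuation $\nu_1$ of $K$, comparing the two expressions for $c$ forces $\nu_1$ to be a positive scalar multiple of $\nu$, i.e.\ equivalent to $\nu$.

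The main obstacle is the affine existence statement for A3 (and hence A4 and A5): $\rho$-compatibility of apartments of $\dot\Delta$ is a purely spherical-building fact from \ref{abc29}, whereas what is needed is $\dot\rho$-invariance of apartments of $(\dot X,\dot{\mathcal A})$ through arbitrary pairs of points of $X$. The cleanest route is via the uniqueness clause of \ref{abc31}: an apartment of $\dot X$ meeting $X$ in a full-dimensional affine subspace is forced to be $\dot\rho$-invariant, with $\dot\rho$ acting on it as $\tau$, because such an apartment is uniquely determined by its base point and by its set of chambers at infinity, both of which are $\dot\rho$-stable in the required way.
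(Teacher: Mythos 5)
Your treatment of A1 and A6 is essentially the paper's, and the bijection at infinity via \ref{abc98}.iv is the right idea. But the proposal has a genuine gap exactly where you flag ``the main obstacle,'' and it is not a gap you can wave away.

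For (A3), the paper does not argue by CAT(0) fixed-point considerations or by appealing to the uniqueness clause of \ref{abc31}. Both of your suggested routes attack the wrong problem. You observe correctly that an apartment of $\dot X$ meeting $X$ in a copy of ${\mathbb A}$ is forced to be $\dot\rho$-invariant (this is essentially \ref{abc95}/\ref{abc95x}); but (A3) requires \emph{producing} such an apartment through two arbitrary points $x,x'\in X$, and nothing in your sketch does that. The phrase ``a Bruhat-Tits fixed-point argument on the space of ambient apartments through the configuration'' is not a well-defined strategy: the set of apartments of $\dot X$ through $\{x,x'\}$ is not naturally a complete CAT(0) space on which $\dot\rho$ acts with bounded orbits in a way that would yield a fixed apartment. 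The paper instead works in the residue building $(\dot X,\dot{\mathcal A})_x$: it uses \ref{abc96} (fixed points of $\dot\rho$ in the residue form a building of type $\Phi$), then \ref{abc28} to find a $\dot\rho$-fixed sector at $x$ with germ opposite the germ of $[x,x']$, extends it to a sector $\dot S_2$ at $x'$ containing $[x,x']$, finds an opposite $\dot\rho$-fixed sector $\dot S_3$ at $x'$ inside a given $\rho$-compatible apartment through $x'$ by another application of \ref{abc28}, and finally invokes Parreau's Lemma 1.12 to get the unique apartment through $\dot S_2\cup\dot S_3$, which is $\dot\rho$-invariant by construction. That residue-and-sector argument is the actual content of (A3), and your sketch contains no substitute for it.

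Two further points. For (A2), ``restricting (after a suitable $\tau$-averaging) the $\dot w$'' is not sound as stated: if $\dot w$ does not centralize $\tau$, averaging it is not a priori an element of $\dot{\mathbb W}$ that still matches $\dot f$ and $\dot f'$ on $M$. The paper first proves $M=\dot M\cap{\mathbb A}$ (needing \ref{abc95x}), normalizes so $\dot w\in\dot W$, and then replaces $\dot w$ by a centralizing $\dot w_1$ by examining the induced maps $\dot f_*,\dot f'_*$ on the residue building of type $\Phi$ at the origin; your proposal omits this step entirely. For (ii), computing $\varphi_\alpha$ on the full six-factor product $x_\alpha(r,s,t)$ and claiming the fixed-point set in $A$ is the intersection of the six ambient half-spaces restricted to ${\mathbb A}$ is harder than what the paper needs and not obviously true for a non-commuting product; the paper only computes $\varphi_\alpha$ on the central elements $x_\alpha(0,0,t)=x_{\dot\alpha_4}(t)x_{\dot\alpha_5}(t)$ (Proposition \ref{abc101}, where the two-factor case is handled carefully), and then leverages \ref{abc90} to pin down the full formula up to equivalence of valuations. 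Your route would require a justification you do not supply.
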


\noindent
Note that \ref{abc91} is a consequence of \ref{abc93}. 

\section{The Proof of \ref{abc93}}

For the rest of this paper, we let $\dot{\Delta}$, $\Delta$,
$(\dot{X},\dot{\mathcal A})$, $\dot{\Phi}$,
$(\dot{\mathbb A},\dot{\mathbb W})$, $\tau$, $\rho$, $\Phi$, $({\mathbb A},{\mathbb W})$, 
$(X,{\mathcal A})$, $A$, $x_A$,
$\dot{\rho}$, $\dot{f}_\rho$, etc.~be as in \ref{abc92}. Let $\dot{X}^{\dot{\rho}}$
denote the set of fixed points of $\dot{\rho}$. 
We prove \ref{abc93} in a series of steps.

\begin{proposition}\label{abc95}
Suppose that $\dot{A}_1$ is an apartment of 
$(\dot{X},\dot{\mathcal A})$. Then the following are equivalent:
\begin{thmenum}
\item $\dot{A}_1$ is $\dot{\rho}$-invariant
and contains at least one sector fixed by $\dot{\rho}$. 
\item There exists a chart $\dot{f}_1$ 
in $\dot{\mathcal A}_\rho$ such that $\dot{\rho}\circ\dot{f}_1=\dot{f}_1\circ\tau$.
\item $\dot{A}_1$ is the image of a chart in $\dot{\mathcal A}_\rho$. 
\end{thmenum}
If $\dot{f}_1$ is as in (ii), then 
$\dot{X}^{\dot{\rho}}\cap\dot{A}_1=X\cap\dot{A}_1=\dot{f}_1({\mathbb A})$.
\end{proposition}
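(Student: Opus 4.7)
The plan is to close the cycle via (ii) $\Rightarrow$ (iii) (trivial), (iii) $\Rightarrow$ (ii) (by a Lie-theoretic argument), and (i) $\Rightarrow$ (ii) (via the canonical chart construction of \ref{abc64}); the implication (ii) $\Rightarrow$ (i) is then immediate by taking the image $\dot{f}_1(S)$ of the $\tau$-invariant sector $S$ of \ref{abc66}, which is a $\dot{\rho}$-fixed sector of $\dot{A}_1=\dot{f}_1(\dot{\mathbb{A}})$, and (ii) $\Rightarrow$ (iii) is immediate because $\dot{\rho}\circ\dot{f}_1=\dot{f}_1\circ\tau$ together with $\tau|_{\mathbb{A}}={\rm id}$ forces $\dot{\rho}$ to fix $\dot{f}_1(\mathbb{A})$ pointwise, hence $\dot{f}_1\in\dot{\mathcal{A}}_\rho$.

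For (iii) $\Rightarrow$ (ii), let $\dot{f}\in\dot{\mathcal{A}}_\rho$ with $\dot{f}(\dot{\mathbb{A}})=\dot{A}_1$. Since $\dot{\rho}$ is a $\tau$-automorphism, I can write $\dot{\rho}\circ\dot{f}=\dot{g}\circ\tau$ for some $\dot{g}\in\dot{\mathcal{A}}$; as $\dot{\rho}(\dot{A}_1)=\dot{A}_1$, the chart $\dot{g}$ has image $\dot{A}_1$, so (A2) gives $\dot{g}=\dot{f}\circ w^{-1}$ for some $w\in\dot{\mathbb{W}}$. Thus $\sigma:=\dot{f}^{-1}\circ\dot{\rho}\circ\dot{f}=w^{-1}\circ\tau$ as isometries of $\dot{\mathbb{A}}$, and the condition $\dot{f}\in\dot{\mathcal{A}}_\rho$ together with $\tau|_{\mathbb{A}}={\rm id}$ forces $w|_{\mathbb{A}}={\rm id}$. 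The crucial input is that the pointwise stabilizer of $\mathbb{A}$ in $\dot{\mathbb{W}}$ is trivial in all three cases: the linear part of any such element lies in the pointwise stabilizer of the direction of $\mathbb{A}$ in $\dot{W}$, which by a standard fact about real reflection groups is generated by the reflections $s_{\dot{\alpha}}$ with $\dot{\alpha}\in\mathbb{A}^\perp$, i.e.\ in the $(-1)$-eigenspace of $\tau$; any such root would satisfy $\tau(\dot{\alpha})=-\dot{\alpha}$, but $\dot{\alpha}$ and $-\dot{\alpha}$ have the same length while $\tau$ interchanges the short and long roots of $\dot{\Phi}$, a contradiction. A short additional argument then forces the translation part of $w$ to vanish, so $w=1$ and $\dot{\rho}\circ\dot{f}=\dot{f}\circ\tau$; thus $\dot{f}$ itself witnesses (ii).

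For (i) $\Rightarrow$ (ii), let $\dot{S}_0\subset\dot{A}_1$ be a $\dot{\rho}$-fixed sector with vertex $x_1$; then $\dot{\rho}(x_1)=x_1$ and the chamber $\dot{S}_0^\infty$ of $\dot{A}_1^\infty$ is $\rho$-fixed, so by \ref{abc29x} there is a type-preserving isomorphism $\dot{\psi}\colon\Sigma(\dot{\Phi})\to\dot{A}_1^\infty$ with $\dot{\psi}(S)=\dot{S}_0^\infty$ and $\dot{\psi}\circ\tau=\rho\circ\dot{\psi}$. Identifying $\dot{\Phi}$ with the roots of $\dot{A}_1^\infty$ via $\dot{\psi}$, I set $\dot{f}_1:=\dot{f}_{\dot{A}_1,x_1}$ as in \ref{abc64}. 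Then $\dot{\rho}\circ\dot{f}_1\circ\tau^{-1}$ is a chart with image $\dot{A}_1$ sending $0$ to $x_1$, and a direct calculation using $\rho\circ\dot{\psi}=\dot{\psi}\circ\tau$ shows it satisfies the same sector condition (iii) of \ref{abc64} as $\dot{f}_1$; by the uniqueness in \ref{abc64} the two charts coincide, giving $\dot{\rho}\circ\dot{f}_1=\dot{f}_1\circ\tau$, and hence $\dot{f}_1\in\dot{\mathcal{A}}_\rho$ as already observed.

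For the closing equalities, the injectivity of $\dot{f}_1$ together with $\dot{\rho}\circ\dot{f}_1=\dot{f}_1\circ\tau$ yields $\dot{X}^{\dot{\rho}}\cap\dot{A}_1=\dot{f}_1(\mathbb{A})$, since $\dot{f}_1(v)$ is $\dot{\rho}$-fixed iff $\tau(v)=v$. The inclusion $X\subseteq\dot{X}^{\dot{\rho}}$ is built into the definition of $\dot{\mathcal{A}}_\rho$ and the construction \eqref{abc92a} of $X$, so $X\cap\dot{A}_1\subseteq\dot{f}_1(\mathbb{A})$; the reverse inclusion is immediate from $\dot{f}_1\in\dot{\mathcal{A}}_\rho$. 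The hard part is the Lie-theoretic step in (iii) $\Rightarrow$ (ii), specifically the verification that no root of $\dot{\Phi}$ lies in the $(-1)$-eigenspace of $\tau$; once this observation about the interaction between $\tau$ and root lengths is secured, everything else is a bookkeeping exercise combining \ref{abc29x}, \ref{abc64}, and the definition of a $\tau$-automorphism.
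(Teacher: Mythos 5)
Your proof is correct, and it takes a genuinely different route from the paper's for the harder implications. The paper closes the cycle as (i)\,$\Rightarrow$\,(ii)\,$\Rightarrow$\,(iii)\,$\Rightarrow$\,(i): for (i)\,$\Rightarrow$\,(ii) it starts from an arbitrary chart $\dot{f}$ with image $\dot{A}_1$, writes $\dot{\rho}\circ\dot{f}=\dot{f}\circ\dot{w}\circ\tau$ via (A2), observes that the $\dot{\rho}$-fixed sector forces $\dot{w}\circ\tau$ to be the unique non-trivial automorphism of $\Sigma(\dot{\Phi})$ fixing a given sector, conjugates $\tau$ to $\dot{w}\circ\tau$ by some $\dot{w}_1\in\dot{W}$ (using transitivity of $\dot{W}$ on sectors), and sets $\dot{f}_1:=\dot{f}\circ\dot{w}_1$; for (iii)\,$\Rightarrow$\,(i) it invokes \ref{abc98}.iv. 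You instead prove (iii)\,$\Rightarrow$\,(ii) directly by showing that the pointwise stabilizer of $\mathbb{A}$ in $\dot{\mathbb{W}}$ is trivial -- because any reflection $s_{\dot\alpha}$ fixing $\mathbb{A}$ pointwise would need $\dot\alpha$ in the $(-1)$-eigenspace of $\tau$, forcing $\tau(\dot\alpha)=-\dot\alpha$, contradicting that $\tau$ interchanges short and long roots -- and you handle (i)\,$\Rightarrow$\,(ii) by identifying $\dot{\rho}\circ\dot{f}_{\dot{A}_1,x_1}\circ\tau^{-1}$ with $\dot{f}_{\dot{A}_1,x_1}$ via the uniqueness clause of \ref{abc64}. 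Your (iii)\,$\Rightarrow$\,(ii) is slightly sharper than what the paper records, since it shows that \emph{every} chart in $\dot{\mathcal{A}}_\rho$ with image $\dot{A}_1$ already satisfies $\dot\rho\circ\dot{f}=\dot{f}\circ\tau$, not merely that one such chart exists; the paper's route avoids the root-system computation by leaning on \ref{abc98}.iv. One small cleanup you could make: once $w$ fixes $\mathbb{A}$ pointwise, $w(0)=0$ immediately (since $0\in\mathbb{A}$), so $w$ is linear and lies in $\dot{W}$; the ``short additional argument'' to kill the translation part is already contained in that observation.
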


\begin{proof}
Let $\dot{f}$ be a chart in $\dot{\mathcal A}$
such that $\dot{A}_1=\dot{f}(\dot{\mathbb A})$. Suppose first that $\dot{A}_1$
is $\dot{\rho}$-invariant and that there exists a sector $S$ of $\dot{\Phi}$ 
such that
\begin{equation}\label{abc95a}
\dot{\rho}(\dot{f}(S))=\dot{f}(S).
\end{equation}
By \ref{abc5}, there exists $\dot{f}'\in\dot{\mathcal A}$ such that
$\dot\rho\circ\dot{f}=\dot{f}'\circ\tau$.
Since $\dot{A}_1$ is $\dot{\rho}$-invariant, also $\dot{f}'(\dot{\mathbb A})$ 
equals $\dot{A}_1$.
By (A2) (in \ref{abc1}), therefore, there exists $\dot{w}\in\dot{\mathbb W}$ such that
$\dot{f}'=\dot{f}\circ\dot{w}$. Thus
\begin{equation}\label{abc95b}
\dot{\rho}\circ\dot{f}=\dot{f}\circ\dot{w}\circ\tau.
\end{equation}
By \ref{abc95a}, it follows that $\dot{w}\circ\tau$ fixes $S$.
Thus $\dot{w}\circ\tau$
is a non-trivial automorphism of $\Sigma(\dot{\Phi})$ fixing $S$. 
There is only one such automorphism. Since $\tau$ also fixes a sector 
of $\dot{\Phi}$ and $\dot{W}$ acts transitively on the set of sectors of
$\dot{\Phi}$, it follows that there exists $\dot{w}_1\in\dot{W}$ such that
$\dot{w}\circ\tau=\dot{w}_1\circ\tau\circ\dot{w}_1^{-1}$. Let $\dot{f}_1=\dot{f}\circ\dot{w}_1$.
Then $\dot{f}_1\in\dot{\mathcal A}$ by (A1) and
\begin{align*}
\dot{\rho}\circ\dot{f}_1&=\dot{\rho}\circ\dot{f}\circ\dot{w}_1\\
&=\dot{f}\circ\dot{w}\circ\tau\circ\dot{w}_1\\
&=\dot{f}\circ\dot{w}_1\circ\tau=\dot{f}_1\circ\tau
\end{align*}
by \ref{abc95b}. 
It follows from this identity that
$\dot{\rho}$ acts trivially on $\dot{f}_1({\mathbb A})$,
so $\dot{f}_1\in\dot{\mathcal A}_\rho$ (but $\dot{\rho}$ 
does not fix any other points in $\dot{A}_1$, so 
$\dot{X}^{\dot{\rho}}\cap\dot{A}_1=X\cap\dot{A}_1=\dot{f}_1({\mathbb A})$). Thus
(i) implies (ii). It now suffices to observe that
if $\dot{f}_1$ is a chart
in $\dot{\mathcal A}_\rho$ whose image is $\dot{A}_1$,
then by \ref{abc98}.iv, $\dot{\rho}$ fixes $\dot{f}_1(S)$
for every $\tau$-invariant sector $S$ of $\dot{\Phi}$.
\end{proof}

\begin{proposition}\label{abc95x}
Let $\dot{f}$ and $\dot{f}_1$ be two charts in $\dot{\mathcal A}_\rho$ such
that $\dot{A}_1:=\dot{f}(\dot{\mathbb A})=\dot{f}_1(\dot{\mathbb A})$. Then 
$$\dot{f}({\mathbb A})=\dot{f}_1({\mathbb A})=\dot{X}^{\dot{\rho}}\cap\dot{A}_1=X\cap\dot{A}_1.$$
\end{proposition}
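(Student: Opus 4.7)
The plan is to reduce Proposition \ref{abc95x} to Proposition \ref{abc95} by producing an auxiliary chart with the explicit intertwining property. Since $\dot{f} \in \dot{\mathcal A}_\rho$ has image $\dot{A}_1$, condition (iii) of \ref{abc95} is satisfied by $\dot{A}_1$, so the equivalence there delivers (ii): there exists a chart $\dot{f}_2 \in \dot{\mathcal A}_\rho$ with $\dot{f}_2(\dot{\mathbb A}) = \dot{A}_1$ and $\dot{\rho} \circ \dot{f}_2 = \dot{f}_2 \circ \tau$. The concluding assertion of \ref{abc95} applied to $\dot{f}_2$ then yields
$$\dot{X}^{\dot{\rho}} \cap \dot{A}_1 = X \cap \dot{A}_1 = \dot{f}_2(\mathbb A).$$
On the other hand, the defining property of $\dot{\mathcal A}_\rho$ forces $\dot{\rho}$ to act trivially on both $\dot{f}(\mathbb A)$ and $\dot{f}_1(\mathbb A)$, and both are plainly subsets of $\dot{A}_1$; hence $\dot{f}(\mathbb A)$ and $\dot{f}_1(\mathbb A)$ are contained in $\dot{X}^{\dot{\rho}} \cap \dot{A}_1 = \dot{f}_2(\mathbb A)$.

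It then suffices to upgrade these two inclusions to equalities. I would argue by dimension inside the Euclidean apartment $\dot{A}_1$: by axiom (A6) in \ref{abc1} together with the injectivity of charts, each of $\dot{f}, \dot{f}_1, \dot{f}_2$ restricts to an isometry of $\dot{\mathbb A}$ onto $\dot{A}_1$, so each of $\dot{f}(\mathbb A), \dot{f}_1(\mathbb A), \dot{f}_2(\mathbb A)$ is an affine subspace of $\dot{A}_1$ of dimension equal to $\dim \mathbb A$. Two affine subspaces of a Euclidean space of the same dimension, with one contained in the other, must coincide, so $\dot{f}(\mathbb A) = \dot{f}_1(\mathbb A) = \dot{f}_2(\mathbb A)$. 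Combined with the displayed identity above, this yields the claimed chain of equalities. I do not anticipate a serious obstacle: once \ref{abc95} supplies the auxiliary chart $\dot{f}_2$, the proof is essentially a bookkeeping of inclusions plus this elementary dimension argument. The conceptual point worth highlighting is that, rather than comparing $\dot{f}(\mathbb A)$ and $\dot{f}_1(\mathbb A)$ directly, we pin both down via the intrinsic set $\dot{X}^{\dot{\rho}} \cap \dot{A}_1$.
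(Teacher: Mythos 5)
Your proof is correct and is essentially the paper's own argument, dressed slightly differently. Both proofs hinge on the same two ideas: first invoke \ref{abc95} to replace the given data by (or, in your version, augment it with) a chart satisfying the explicit intertwining identity $\dot{\rho}\circ\dot{f}_2=\dot{f}_2\circ\tau$, which pins down $\dot{X}^{\dot{\rho}}\cap\dot{A}_1=X\cap\dot{A}_1=\dot{f}_2({\mathbb A})$; then upgrade the inclusions $\dot{f}({\mathbb A}),\dot{f}_1({\mathbb A})\subset\dot{f}_2({\mathbb A})$ to equalities by a dimension count. The only point of divergence is the mechanism of the final step: the paper applies (A2) to obtain $\dot{w}\in\dot{\mathbb W}$ with $\dot{f}=\dot{f}_1\circ\dot{w}$, from which $\dot{w}({\mathbb A})\subset{\mathbb A}$ forces $\dot{w}({\mathbb A})={\mathbb A}$, while you appeal to (A6) to view $\dot{f}_2^{-1}\circ\dot{f}$ as an isometry of $\dot{\mathbb A}$ and then compare affine subspaces of equal dimension. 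These are the same argument in different clothing---the paper's final ``hence $\dot{w}$ maps ${\mathbb A}$ to itself'' is itself the dimension count you make explicit---so the two routes are equivalent in content and length. One small stylistic advantage of your version: by introducing the auxiliary $\dot{f}_2$ rather than replacing $\dot{f}_1$, you avoid the implicit ``by symmetry'' that the paper's opening ``it suffices to assume'' requires the reader to supply.
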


\begin{proof}
By \ref{abc95}, it suffices to assume that $\dot{\rho}\circ\dot{f}_1=\dot{f}_1\circ\tau$
and $\dot{X}^{\dot{\rho}}\cap\dot{A}_1=X\cap\dot{A}_1=\dot{f}_1({\mathbb A})$. Thus
$\dot{f}({\mathbb A})\subset\dot{X}^{\dot{\rho}}\cap\dot{A}_1=\dot{f}_1({\mathbb A})$.
By (A2), therefore, there exists $\dot{w}\in\dot{\mathbb W}$ such that
$\dot{f}_1\circ\dot{w}$ and $\dot{f}$ coincide on ${\mathbb A}$. Thus 
$\dot{f}_1(\dot{w}({\mathbb A}))=\dot{f}({\mathbb A})
\subset\dot{f}_1({\mathbb A})$
and hence $\dot{w}$ maps ${\mathbb A}$ to itself. Therefore
$\dot{f}_1({\mathbb A})=\dot{f}({\mathbb A})$.
\end{proof}

\begin{proposition}\label{abc96}
Let $x\in X$ and let $g_x$ be as in \ref{abc59}. 
Then the set of fixed points of $\dot{\rho}$ in the
residue $(\dot{X},\dot{\mathcal A})_x$ has the structure of 
a building of type $\Phi$ and for all $\dot{f}\in\dot{\mathcal A}_\rho$
mapping $0$ to $x$, $g_x(\dot{f}({\mathbb A}))$ is an apartment 
of this building.
\end{proposition}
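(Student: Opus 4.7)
The plan is to apply Theorem~\ref{abc29} to the building $\dot\Delta':=(\dot X,\dot{\mathcal A})_x$ of type $\dot\Phi$ (see \ref{abc59}), equipped with the simplicial automorphism $\bar\rho$ induced on it by $\dot\rho$. Because $x\in X\subseteq\dot X^{\dot\rho}$, the $\tau$-automorphism $\dot\rho$ fixes $x$ and therefore descends to $\dot\Delta'$, and $\bar\rho^2=1$ since $\dot\rho^2=1$. To invoke \ref{abc29} I must verify that $\bar\rho$ is non-type-preserving and that $\dot\Delta'$ has at least one $\bar\rho$-compatible apartment in the sense of \ref{abc29y}.

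Both of these follow from choosing a single well-adapted chart. By \eqref{abc92a} there exist $\dot f_0\in\dot{\mathcal A}_\rho$ and $p\in\mathbb A$ with $x=\dot f_0(p)$; the translated chart $\dot f_0\circ t_p$ again lies in $\dot{\mathcal A}_\rho$ (since $t_p\in\mathbb W$ preserves $\mathbb A$) and sends $0$ to $x$. Following the argument in the proof of \ref{abc95}, I would then compose with an element $\dot w_1$ of the spherical Weyl group $\dot W$ to obtain a chart $\dot f\in\dot{\mathcal A}_\rho$ satisfying $\dot\rho\circ\dot f=\dot f\circ\tau$; because $\dot w_1$ fixes the origin, $\dot f$ still sends $0$ to $x$. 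Applying the construction $\dot f\mapsto \dot f_*$ of \ref{abc63} transports this identity to an isomorphism $\dot f_*\colon\Sigma(\dot\Phi)\to g_x(\dot f(\dot{\mathbb A}))$ satisfying $\bar\rho\circ\dot f_*=\dot f_*\circ\tau$. Since $\tau$ is non-type-preserving, so is $\bar\rho$; and since this identity is exactly condition~(i) of \ref{abc29x}, the residue apartment $g_x(\dot f(\dot{\mathbb A}))$ is $\bar\rho$-compatible.

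Theorem~\ref{abc29} therefore applies and produces a building structure of type $\Phi$ on the set of $\bar\rho$-fixed chambers of $\dot\Delta'$, whose apartments are the subgraphs spanned by the $\bar\rho$-fixed chambers in $\bar\rho$-compatible apartments of $\dot\Delta'$. It remains to show that for an \emph{arbitrary} $\dot f\in\dot{\mathcal A}_\rho$ with $\dot f(0)=x$, the set $g_x(\dot f(\mathbb A))$ is such an apartment. For any such $\dot f$, the apartment $g_x(\dot f(\dot{\mathbb A}))$ of $\dot\Delta'$ is $\bar\rho$-invariant by the definition of $\dot{\mathcal A}_\rho$ and contains a $\bar\rho$-fixed chamber, namely the germ at $x$ of the pushforward under $\dot f$ of any $\tau$-invariant sector of $\dot\Phi$; hence $g_x(\dot f(\dot{\mathbb A}))$ is $\bar\rho$-compatible by criterion~(iii) of \ref{abc29x}. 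Its $\bar\rho$-fixed chambers are precisely the germs of those pushforwards, which via the bijection of \ref{abc98}(iv) between $\tau$-invariant faces of $\dot\Phi$ and faces of $\Phi$ correspond canonically to the chambers of $g_x(\dot f(\mathbb A))$.

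The main technical hurdle is this last identification: chambers of $\dot\Delta'$ are germs of top-dimensional faces of $\dot\Phi$, while the objects we wish to match them with and read off from $\dot f(\mathbb A)$ live in the lower-dimensional subspace $\mathbb A\subset\dot{\mathbb A}$ (in case~$\sf F$). The bijection in \ref{abc98}(iv) is precisely the tool that mediates between these two descriptions.
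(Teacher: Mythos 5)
Your proposal is correct and follows the same route as the paper: apply Theorem~\ref{abc29} to the residue building $(\dot X,\dot{\mathcal A})_x$ of type $\dot\Phi$ together with the involution induced by $\dot\rho$. The paper's own proof is essentially a one-line citation of \ref{abc29}; you have (appropriately) supplied the verifications it leaves implicit -- that the induced map is non-type-preserving of order two, that a $\bar\rho$-compatible residue apartment exists (via \ref{abc95} and \ref{abc63}), and that the sets $g_x(\dot f(\mathbb A))$ match the apartments produced by \ref{abc29}.iii through the bijection of \ref{abc98}.iv.
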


\begin{proof} 
By \ref{abc59}, $(\dot{X},\dot{\mathcal A})_x$ is a building
of type $\dot{\Phi}$ whose apartments are all of the form
$g_x(\dot{f}(\dot{\mathbb A}))$ for some $\dot{f}\in\dot{\mathcal A}$. 
The conclusion holds, therefore,
by applying \ref{abc29} to this building and the automorphism
of this building induced by $\dot{\rho}$.
\end{proof}

\begin{corollary}\label{abc28}
Let $x\in X$, let $\dot{u}$ be a germ at $x$ fixed by
$\dot{\rho}$ and let $\dot{f}$ be a chart in $\dot{\mathcal A}_\rho$
mapping $0$ to $x$. Then there exists a sector with vertex $x$
in the apartment $g_x(\dot{f}(\dot{\mathbb A}))$ that is 
fixed by $\dot{\rho}$ and
whose germ is opposite a maximal germ at $x$ containing $\dot{u}$.
\end{corollary}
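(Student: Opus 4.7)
The plan is to transfer the question to the spherical residue $R := (\dot X, \dot{\mathcal A})_x$, solve an opposition problem in the $\dot\rho$-fixed sub-building of $R$, and then lift the chosen chamber back to a sector via $\dot f$.

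First I would apply \ref{abc96}: the $\dot\rho$-fixed chambers of $R$ form a building $\bar R$ of type $\Phi$, and the fixed chambers of the apartment $\bar\Sigma := g_x(\dot f(\dot{\mathbb A}))$ span an apartment $\bar\Sigma_0$ of $\bar R$. Viewed inside a $\dot\rho$-compatible apartment of $R$ containing it and using \ref{abc98}.iv, the fixed germ $\dot u$ corresponds to a simplex $\bar u$ of $\bar R$. I would then invoke the standard spherical-building fact that, given any apartment $A$ and any simplex $\sigma$, some chamber of $A$ is opposite a chamber containing $\sigma$: this yields a chamber $\bar C \in \bar\Sigma_0$ and a chamber $\bar D$ of $\bar R$ with $\bar u \subseteq \bar D$ such that $\bar C$ is opposite $\bar D$ in $\bar R$.

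Next I would promote opposition from $\bar R$ to the ambient residue $R$. The chambers $\bar C, \bar D$ of $\bar R$ lie in a common apartment of $\bar R$, which by \ref{abc29}.iii is spanned by the fixed chambers of some $\dot\rho$-compatible apartment $\bar\Sigma'$ of $R$; in that fixed sub-apartment they are antipodal. By \ref{abc98}, the antipodal involution $v \mapsto -v$ of the spherical apartment $\bar\Sigma'$ restricts to the antipodal involution of its $\tau$-fixed subspace, so $\bar C$ and $\bar D$ remain antipodal in $\bar\Sigma'$ as well, and hence are opposite in $R$.

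Finally I would lift $\bar C$ via the isomorphism $\dot f_* : \Sigma(\dot\Phi) \to \bar\Sigma$ from \ref{abc63}, which sends each face $F$ of $\dot\Phi$ to $g_x(\dot f(F))$. Set $\tilde S := \dot f_*^{-1}(\bar C)$ and $S := \dot f(\tilde S)$; then $S$ is a sector with vertex $x = \dot f(0)$ inside $\dot f(\dot{\mathbb A})$ whose germ at $x$ is $\bar C$. Since $\dot f \in \dot{\mathcal A}_\rho$, the involution $\dot\rho$ preserves $\dot f(\dot{\mathbb A})$ and fixes $\bar C$, so $\dot\rho(S)$ is a sector in $\dot f(\dot{\mathbb A})$ with vertex $x$ and the same germ $\bar C$ at $x$; by uniqueness of a sector in an apartment from its vertex and germ, $\dot\rho(S) = S$. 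By construction the germ $\bar C$ of $S$ is opposite in $R$ to the maximal germ $\bar D \supseteq \dot u$, as required.

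The principal obstacle is the opposition transfer: one must argue that chambers of $\bar R$ opposite in the sub-building remain opposite in the ambient residue $R$ even though the Weyl groups (of types $\Phi$ and $\dot\Phi$, respectively) are different. Everything hinges on the geometric content of \ref{abc98}, namely that the $\tau$-fixed apartment sits inside the ambient apartment as a sub-sphere preserved by the antipodal map.
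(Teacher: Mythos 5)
Your proposal is correct and follows essentially the same route as the paper's proof: pass to the residue $(\dot X,\dot{\mathcal A})_x$, apply \ref{abc96} to obtain the fixed sub-building $\Xi$ of type $\Phi$, invoke the elementary spherical-building fact that a given apartment always contains a chamber opposite to a given chamber, and lift the resulting chamber to a $\dot\rho$-fixed sector via the chart $\dot f$. The one place where you go beyond the paper is the explicit ``opposition transfer'': the paper's three-line proof simply cites the opposite-chamber fact inside $\Xi$ and leaves unstated that two chambers of $\Xi$ opposite in an apartment of $\Xi$ are then opposite in the ambient residue of type $\dot\Phi$. You justify this via \ref{abc98} (the antipodal involution of $\dot{\mathbb A}$ commutes with $\tau$ and restricts to the antipodal involution of ${\mathbb A}$), which is exactly the right observation and is indeed needed since the later application in \ref{abc76} (via Lemma 1.13 of Parreau) requires opposition in the full residue. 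So the proposal is not merely correct — it makes explicit a step the paper glosses over.
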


\begin{proof}
The germ $\dot{u}$ is a face
and $g_x(\dot{f}({\mathcal A}))$ is an apartment of the building 
of type $\Phi$ described in \ref{abc96}. 
Since the apartment $g_x(\dot{f}(\dot{\mathbb A}))$ is $\dot{\rho}$-invariant, 
a sector with vertex $x$ in this apartment is fixed by $\dot{\rho}$ if and only 
if its germ is fixed by $\dot{\rho}$.
The claim follows, therefore, from the fact that for each 
chamber $C$ and each apartment $\Sigma$ in a spherical building,
there always exists a chamber in $\Sigma$ which is opposite $C$.
\end{proof}

\begin{proposition}\label{abc177}
Let $\dot{A}_1$ be the image of a chart $\dot{f}_1$ in $\dot{\mathcal A}_\rho$.
Then the map 
$$\dot{S}_1\mapsto\dot{S}_1\cap X$$ 
is a bijection from the set of 
sectors of $\dot{A}_1$ that are fixed by $\dot{\rho}$ to the set of 
sectors of $\dot{f}_1({\mathbb A})$, i.e.~to the set of images under $\dot{f}_1$
of sectors of $\Phi$.
\end{proposition}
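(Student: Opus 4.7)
The plan is to show that the map factors as a composition of three bijections, each supplied by an earlier result in the paper. Throughout, write $\dot{S}_1 = \dot{f}_1(\dot S)$ for a sector $\dot S$ of $\dot\Phi$; every sector of $\dot{A}_1$ arises in this way, and since $\dot{f}_1$ is injective, $\dot{S}_1$ determines $\dot S$ uniquely.

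First, I would verify the $\dot\rho$-invariance criterion. Since $\dot{f}_1 \in \dot{\mathcal A}_\rho$ and $\dot{A}_1 = \dot{f}_1(\dot{\mathbb A})$, the equivalence (ii) in \ref{abc95} supplies the relation $\dot\rho \circ \dot{f}_1 = \dot{f}_1 \circ \tau$ (after replacing $\dot{f}_1$ within its $\dot{\mathbb W}$-orbit if necessary; note that sectors are unaffected by choosing a different such representative since $\dot{\mathcal A}_\rho$-charts with the same image agree on $\mathbb A$ by \ref{abc95x}). Hence $\dot\rho(\dot{f}_1(\dot S)) = \dot{f}_1(\tau(\dot S))$, and by injectivity of $\dot{f}_1$, the sector $\dot{S}_1 = \dot{f}_1(\dot S)$ is $\dot\rho$-invariant if and only if $\dot S$ is $\tau$-invariant.

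Next, I would identify $\dot{S}_1 \cap X$ explicitly. Because $\dot{S}_1 \subset \dot{A}_1$, we have $\dot{S}_1 \cap X = \dot{S}_1 \cap (X \cap \dot{A}_1) = \dot{S}_1 \cap \dot{f}_1(\mathbb A)$ by \ref{abc95x}. Using injectivity of $\dot{f}_1$ once more, this equals $\dot{f}_1(\dot S \cap \mathbb A)$. So the map in question sends $\dot{f}_1(\dot S)$ to $\dot{f}_1(\dot S \cap \mathbb A)$.

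Finally, I would invoke \ref{abc98}.iv: the assignment $\dot S \mapsto \dot S \cap \mathbb A$ is an inclusion-preserving bijection from $\tau$-invariant faces of $\dot\Phi$ onto faces of $\Phi$, and by comparing dimensions (the $\tau$-invariant sectors are the maximal $\tau$-invariant faces, which correspond to the maximal faces of $\Phi$) it restricts to a bijection from $\tau$-invariant sectors of $\dot\Phi$ onto sectors of $\Phi$. Applying $\dot{f}_1$ then gives a bijection onto the sectors $\dot{f}_1(S)$ of $\dot{f}_1(\mathbb A)$ for $S$ a sector of $\Phi$, which by definition (\ref{abc1}) are exactly the sectors of the apartment $\dot{f}_1(\mathbb A)$ in $(X,\mathcal A)$. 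Combining the three steps proves the proposition. There is no real obstacle here; the content is simply that the identifications set up in \ref{abc95}, \ref{abc95x}, and \ref{abc98}.iv fit together compatibly.
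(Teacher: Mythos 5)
Your proof is correct and follows essentially the same route as the paper's, which also reduces the claim to the bijection $\dot S\mapsto\dot S\cap\mathbb A$ of \ref{abc98} and the identity $\dot f_1(\dot S\cap\mathbb A)=\dot f_1(\dot S)\cap X$ from \ref{abc95x}. You supply somewhat more detail than the paper's two-line proof — in particular the explicit check that $\dot\rho$-invariance of $\dot f_1(\dot S)$ corresponds to $\tau$-invariance of $\dot S$ after passing to a chart satisfying $\dot\rho\circ\dot f_1=\dot f_1\circ\tau$, and the remark that this replacement is harmless — but the underlying argument is the same.
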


\begin{proof}
By \ref{abc98}, the map $\dot{S}\mapsto\dot{S}\cap{\mathbb A}$ is a bijection 
from the set of sectors of $\dot{\Phi}$ that are fixed by $\tau$ to the 
set of sectors of $\Phi$. If $\dot{S}$ is any one of these sectors, then
$\dot{f}_1(\dot{S}\cap{\mathbb A})=\dot{f}_1(\dot{S})\cap X$ by \ref{abc95x}.
\end{proof}

\begin{proposition}\label{abc76}
The pair $(X,{\mathcal A})$ is a non-discrete Euclidean building of type $\Phi$.
\end{proposition}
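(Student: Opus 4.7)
The plan is to verify the six axioms (A1)--(A6) of Definition~\ref{abc1} for the pair $(X,\mathcal{A})$ with model $(\mathbb{A},\mathbb{W})$, leveraging that $(\dot{X},\dot{\mathcal{A}})$ already satisfies them for the larger model $(\dot{\mathbb{A}},\dot{\mathbb{W}})$ and exploiting the structural facts just established (\ref{abc95}, \ref{abc95x}, \ref{abc96}, \ref{abc28}, \ref{abc177}). Axiom (A6) is immediate: for $\dot{f}\in\dot{\mathcal{A}}_\rho$ the restriction $f=\dot{f}|_{\mathbb{A}}$ is an isometric embedding of the Euclidean subspace $\mathbb{A}\subset\dot{\mathbb{A}}$. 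For (A1), each $w\in\mathbb{W}$ is by definition the restriction to $\mathbb{A}$ of an element $\dot{w}\in\dot{\mathbb{W}}$ centralizing $\tau$; then $\dot{f}\circ\dot{w}\in\dot{\mathcal{A}}$ by (A1) for $\dot{X}$, its image equals $\dot{f}(\dot{\mathbb{A}})$, and since $\dot{w}$ preserves $\mathbb{A}$ one has $(\dot{f}\circ\dot{w})(\mathbb{A})=\dot{f}(\mathbb{A})$, so $\dot{f}\circ\dot{w}\in\dot{\mathcal{A}}_\rho$ and its restriction is $f\circ w$. Axiom (A5) follows from (A5) for $\dot{X}$: using \ref{abc98}.iii, a root $f(H_\alpha)$ of $(X,\mathcal{A})$ is $\dot{f}(H_{\dot{\alpha}})\cap X$ for a suitable $\dot{\alpha}\in\dot{\Phi}$; three apartments of $(X,\mathcal{A})$ meeting pairwise in roots extend to three apartments of $\dot{X}$ in $\dot{\mathcal{A}}_\rho$ meeting pairwise in roots of $\dot{X}$ (by \ref{abc95x}), and common intersection in $\dot{X}$ forces common intersection in $X$.

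For (A2), write $f=\dot{f}|_{\mathbb{A}}$, $f'=\dot{f}'|_{\mathbb{A}}$ with $\dot{f},\dot{f}'\in\dot{\mathcal{A}}_\rho$, and let $\dot{M}$ be the set in (A2) for $\dot{f},\dot{f}'$. Since \ref{abc95x} gives $X\cap\dot{f}'(\dot{\mathbb{A}})=\dot{f}'(\mathbb{A})$, one sees $M=\dot{M}\cap\mathbb{A}$, so $M$ is closed and convex. By (A2) for $\dot{X}$ there is $\dot{w}\in\dot{\mathbb{W}}$ with $\dot{f}=\dot{f}'\circ\dot{w}$ on $\dot{M}$. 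The action of $\dot{\rho}$ on the intersection $\dot{f}(\dot{M})$ is given by conjugation of $\tau$ by both $\dot{f}$ and $\dot{f}'$, and comparing these forces $\dot{w}\circ\tau=\tau\circ\dot{w}$ on $\dot{M}$. In particular $\dot{w}$ sends $M$ into $\mathbb{A}$, so the restriction $w:=\dot{w}|_M$ is the restriction to $M$ of an element of $\mathbb{W}$; this $w$ satisfies $f=f'\circ w$ on $M$.

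For (A3), given $x,y\in X$ the geodesic $[x,y]$ is $\dot{\rho}$-invariant by uniqueness in $\dot{X}$; its germ $\dot{u}$ at $x$ is therefore $\dot{\rho}$-fixed. Apply \ref{abc28} at $x$ to produce a $\dot{\rho}$-fixed sector $\dot{S}_x$ whose germ is opposite a maximal germ at $x$ containing $\dot{u}$, and apply \ref{abc28} at $y$ with the analogous germ pointing from $y$ toward $x$ to obtain $\dot{S}_y$. A careful choice makes the sector germs at infinity of $\dot{S}_x$ and $\dot{S}_y$ opposite and $\dot{\rho}$-fixed; two opposite sector germs at infinity determine a unique apartment of $\dot{X}$, which is therefore $\dot{\rho}$-invariant, contains a $\dot{\rho}$-fixed sector, and so by \ref{abc95} lies in $\dot{\mathcal{A}}_\rho$; since it contains $[x,y]$, its intersection with $X$ is an apartment of $(X,\mathcal{A})$ containing $x$ and $y$. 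Axiom (A4) is handled analogously: given sectors $S,S'$ of $(X,\mathcal{A})$, \ref{abc177} gives $\dot{\rho}$-fixed sectors $\dot{S}\supseteq S$, $\dot{S}'\supseteq S'$ of $\dot{X}$; apply (A4) for $\dot{X}$ to find subsectors of $\dot{S},\dot{S}'$ in a common apartment $\dot{A}_1$; shrink the subsectors so that their sector germs at infinity are the $\dot{\rho}$-fixed germs of $\dot{S},\dot{S}'$, forcing $\dot{A}_1$ to be $\dot{\rho}$-invariant and to contain a $\dot{\rho}$-fixed sector, hence $\dot{A}_1\in\dot{\mathcal{A}}_\rho$, and \ref{abc177} produces the required subsectors in $X$.

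The main obstacle, running through (A3) and (A4), is the passage from apartments in $\dot{X}$ (which exist by the Bruhat-Tits axioms) to apartments lying in $\dot{\mathcal{A}}_\rho$. The strategy in both cases is to pin down the apartment by prescribing at least one $\dot{\rho}$-fixed sector germ at infinity, either via the residue arguments of \ref{abc28}--\ref{abc96} or by the infinity-data bijection of \ref{abc177}; this reduces the problem to an opposition/uniqueness statement for apartments spanned by prescribed sector germs, which then makes $\dot{\rho}$-invariance automatic.
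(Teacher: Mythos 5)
Your overall strategy---verifying (A1)--(A6) for $(X,\mathcal{A})$ by pulling back to $(\dot{X},\dot{\mathcal{A}})$ via \ref{abc95}, \ref{abc95x}, \ref{abc96}, \ref{abc28}, \ref{abc177}---is the same as the paper's, and (A1), (A5), (A6) are handled correctly. However, there are genuine gaps in (A2), (A3) and (A4).

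The most serious is (A2). After obtaining $\dot{w}\in\dot{\mathbb{W}}$ with $\dot{f}=\dot{f}'\circ\dot{w}$ on $\dot{M}$, you assert that ``comparing the actions of $\dot{\rho}$ forces $\dot{w}\circ\tau=\tau\circ\dot{w}$ on $\dot{M}$,'' and then that the restriction of $\dot{w}$ to $M$ is the restriction of an element of $\mathbb{W}$. Neither step is justified: the element $\dot{w}$ supplied by (A2) for $\dot{X}$ is only determined on $\dot{M}$ (not globally), and knowing that $\dot{w}$ and $\tau$ commute \emph{on the set} $\dot{M}$ (which may be low-dimensional) does not give an element of the centralizer of $\tau$ in $\dot{\mathbb{W}}$ restricting to $\dot{w}$ on $M$. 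The paper's proof addresses exactly this: after normalizing so that $0\in M$ and $\dot{w}\in\dot{W}$, it passes to the residue at $\dot{f}(0)$ and uses the structure of the fixed-point building $\Xi$ of type $\Phi$ from \ref{abc96} (in particular, that the intersection of two apartments of $\Xi$ is a connected arc or, in case $\sf F$, possibly two opposite panels) to produce an element $\dot{w}_1$ in the centralizer of $\tau$ in $\dot{W}$ agreeing with $\dot{w}$ on $M$. This is not a cosmetic step; without it the proof of (A2) is incomplete.

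In (A4), you take an apartment $\dot{A}_1$ produced by (A4) for $\dot{X}$ and claim that because its two sector germs at infinity are $\dot{\rho}$-fixed, $\dot{A}_1$ itself is $\dot{\rho}$-invariant. That implication fails unless the two chambers at infinity are opposite, which is not guaranteed. The paper avoids this by choosing $\dot{A}_1$ differently: since $\Delta=\dot{\Delta}^\rho$ is a building (\ref{abc29}), the two chambers $\dot{S}^\infty, (\dot{S}')^\infty$ of $\Delta$ lie in some apartment of $\Delta$, which by \ref{abc29}.iii comes from a $\rho$-invariant apartment of $\dot{\Delta}$, and hence from a $\dot{\rho}$-invariant apartment of $\dot{X}$; one then intersects subsectors with their $\dot{\rho}$-images. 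A similar remark applies to (A3): applying \ref{abc28} independently at $x$ and at $y$ does not immediately give opposite sector germs at infinity. The paper's argument is a two-stage construction---first an apartment containing $\dot{S}_1\cup\dot{F}$ (via Lemma~1.13 of \cite{parreau}), then a sector $\dot{S}_2$ with vertex $x'$, then a second application of \ref{abc28} at $x'$ and uniqueness of the apartment spanned by $\dot{S}_2\cup\dot{S}_3$ (Lemma~1.12 of \cite{parreau}). Your ``a careful choice makes the sector germs opposite'' papers over exactly this iteration, which is where the real work lies.
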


\begin{proof} 
We need to show that $(X,{\mathcal A})$ satisfies the conditions (A1)--(A6) formulated in
\ref{abc1}.

Let $f\in{\mathcal A}$ and $w\in{\mathbb W}$. There exist elements
$\dot{f}$ in $\dot{\mathcal A}_\rho$ and $\dot{w}$ in the centralizer
of $\tau$ in $\dot{\mathbb W}$ such that 
$f$ is the restriction of $\dot{f}$ to ${\mathbb A}$ and 
$w$ is the restriction of $\dot{w}$ to ${\mathbb A}$.
Since $(\dot{X},\dot{\mathcal A})$ satisfies (A1), 
we have $\dot{f}\circ\dot{w}\in\dot{\mathcal A}$.
It follows that $f\circ w\in\dot{\mathcal A}_\rho$ since 
$\dot{f}(\dot{w}(\dot{\mathbb A}))=\dot{f}(\dot{\mathbb A})$ and 
$\dot{f}(\dot{w}({\mathbb A}))=\dot{f}({\mathbb A})$.
Thus $(X,{\mathcal A})$ satisfies (A1).

Next let $\dot{f},\dot{f}'\in\dot{\mathcal A}_\rho$.
Since $(\dot{X},\dot{\mathcal A})$ satisfies (A2), the set
$$\dot{M}:=\{v\in\dot{\mathbb A}\mid \dot{f}(v)\in\dot{f}'(\dot{\mathbb A})\}$$
is closed and convex and there exists $\dot{w}\in\dot{\mathbb W}$ such that the maps
$\dot{f}$ and $\dot{f}'\circ\dot{w}$ coincide on $\dot{M}$. Let
$$M:=\{v\in{\mathbb A}\mid\dot{f}(v)\in\dot{f}'({\mathbb A})\}.$$
Then $M\subset\dot{M}\cap{\mathbb A}$. Let $v\in\dot{M}\cap{\mathbb A}$. 
Thus $v\in{\mathbb A}$ and $\dot{f}(v)=\dot{f}'(v')$ for some $v'\in\dot{\mathbb A}$.
Since $\dot{f}\in\dot{\mathcal A}_\rho$, the point $\dot{f}(v)$ is a point
of $\dot{f}'(\dot{\mathbb A})$ fixed by 
$\dot{\rho}$. Since also $\dot{f}'\in\dot{\mathcal A}_\rho$, it follows 
by \ref{abc95x} that $v'\in{\mathbb A}$ and hence $v\in M$.
We conclude that $M=\dot{M}\cap{\mathbb A}$. Since $\dot{M}$ is closed and convex,
it follows that $M$ is also closed and convex.

To finish showing that $(X,{\mathcal A})$ satisfies (A2), we can assume that
$|M|>1$. By (A1), we can assume further that the origin $0$ is contained
in $M$ and that $\dot{w}$ fixes $0$, so $\dot{w}\in\dot{W}$. Let 
$\dot{f}_*$ and $\dot{f}'_*$ be as in \ref{abc63} 
and let $\Xi$ denote the building of type $\Phi$ described in \ref{abc96}. 
By \ref{abc98}.iv, we can think of the $\tau$-invariant faces of $\Sigma(\dot{\Phi})$ 
as the faces of $\Sigma(\Phi)$. Thus $\dot{f}_*$ and $\dot{f}'_*$ both
map $\Sigma(\Phi)$ to apartments of the building $\Xi$. 
In cases ${\sf B}$ and ${\sf G}$,
an apartment of $\Xi$ is an arbitrary two-element set of chambers.
In case ${\sf F}$, an apartment of $\Xi$ 
is a circuit consisting of 16 chambers and 16 panels and two distinct apartments
intersect either in a connected piece of this circuit (possibly empty)
or in two opposite panels. 
Let $Y$ be the set of all faces of $\Sigma(\Phi)$ which are mapped by $\dot{f}_*$ to 
$\dot{f}'_*(\Sigma(\Phi))$. 
Thus either $Y=\Sigma(\Phi)$, $Y$ is a simplicial arc in $\Sigma(\Phi)$ (possibly
empty) or we are in case ${\sf F}$ and $Y$ consists of two opposite panels (i.e.~two opposite
non-maximal faces).
The map $(\dot{f}'_*)^{-1}\circ\dot{f}_*$ from $Y$ into $\Sigma(\Phi)$ is 
type-preserving (since the maps $\dot{f}_*$ and $\dot{f}'_*$ are type-preserving) and if 
$Y$ consists of two opposite panels of a given type, then 
$(\dot{f}'_*)^{-1}\big(\dot{f}_*(Y)\big)$ consists of two opposite panels
of the same type (since the maps $\dot{f}_*$ and $\dot{f}'_*$ both map opposite panels
to opposite panels).
Thus in every case there exists an element $\dot{w}_1$
in the centralizer of $\tau$ in $\dot{W}$ such that $\dot{f}_*$ 
and $\dot{f}_*'\circ \dot{w}_1$ coincide on $Y$.

Let $z$ be an arbitrary non-zero element of $M$ and let $z'=\dot{w}(z)$. Since $M$ is convex,
it contains the closed interval $[0,z]$. Let $\dot{F}$ be the unique minimal
face of $\Sigma(\Phi)$ that contains $[0,z]$. Since $\dot{f}$ and $\dot{f}'\circ\dot{w}$
coincide on $M$, we have $z'\in{\mathbb A}$ and $\dot{f}(z)=\dot{f}'(z')$. If
$\dot{F}'$ is the minimal face of $\Sigma(\Phi)$
that contains $[0,z']$, then $\dot{f}(\dot{F})=\dot{f}'(\dot{F}')$.
Therefore $\dot{F}\in Y$ (and, in particular, $Y$ is not empty). Thus
$\dot{f}_*'(\dot{F}')=\dot{f}_*(\dot{F})=\dot{f}'_*(\dot{w}_1(\dot{F}))$ by the conclusion of the
previous paragraph and hence $\dot{F}'=\dot{w}_1(\dot{F})$. It follows that
$\dot{w}(z)=z'\in\dot{w}_1(\dot{F})$, so the points $z$ and 
$\dot{w}_1^{-1}\dot{w}(z)$ are both contained in the face $\dot{F}$. Since every point
of $\dot{\mathbb A}$ distinct from the origin 
is contained in at most one face of $\dot{\Phi}$ of a given type and the 
stabilizer of a face in $\dot{W}$ acts trivially on that face, it 
follows that $\dot{w}_1(z)=\dot{w}(z)$. 
We conclude that $\dot{f}'\circ \dot{w}_1$ coincides with $\dot{f}$ on $M$. 
Thus $(X,{\mathcal A})$ satisfies (A2).

Now let $x,x'\in X$. Since $(\dot{X},\dot{\mathcal A})$ satisfies (A3), there exists
an apartment $\dot{A}_1$ of $(\dot{X},\dot{\mathcal A})$
containing the interval $[x,x']$ (as defined in \ref{abc73}). 
By \ref{abc92a}, there exist $\dot{f},\dot{f}'\in\dot{\mathcal A}_\rho$
such that $x\in\dot{f}({\mathbb A})$ and $x'\in\dot{f}'({\mathbb A})$.
Since $\dot{\rho}$ fixes $x$ and $x'$, it fixes the point $g_x(x')$ of the residue
$(\dot{X},\dot{\mathcal A})_x$ defined in \ref{abc59}. Let $\dot{F}$ be the 
unique minimal face of the apartment $\dot{A}_1$ with vertex $x$
whose germ contains the point $g_x(x')$
and let $\dot{u}$ denote the germ of $\dot{F}$.
The germ $\dot{u}$ is fixed by $\dot{\rho}$ (since otherwise $\dot{u}^{\dot{\rho}}$ 
would be disjoint from $\dot{u}$). By \ref{abc28}, the apartment
$g_x(\dot{f}(\dot{\mathbb A}))$ contains a sector with vertex $x$
which is both fixed by $\dot{\rho}$ and whose germ is opposite a maximal germ 
at $x$ containing $\dot{u}$. By
Lemma~1.13 in \cite{parreau}, there exists an apartment $\dot{A}_2$ containing
$\dot{S}_1\cup\dot{F}$. Thus $[x,x']\subset\dot{F}\subset\dot{A}_2$.
The convex closure of $\dot{S}_1\cup\{x'\}$ is a sector of $\dot{A}_2$ 
with vertex $x'$ which contains the interval $[x,x']$.
We denote this sector by $\dot{S}_2$.
Since $\dot{\rho}$ fixes $\dot{S}_1$ and $x'$, it fixes $\dot{S}_2$
as well. By a second application of \ref{abc28}, there exists a
sector $\dot{S}_3$ with vertex $x'$ 
in the apartment $\dot{f}'(\dot{\mathbb A})$ that is fixed by
$\dot{\rho}$ and whose germ is opposite the germ of $\dot{S}_2$
at $x'$. By Lemma~1.12 in \cite{parreau}, there exists
a unique apartment $\dot{A}_3$ containing $\dot{S}_2\cup\dot{S}_3$.
This apartment contains $[x,x']$ (since $\dot{S}_2$ contains $[x,x']$)
and is $\dot{\rho}$-invariant (since $\dot{S}_2$ and 
$\dot{S}_3$ are $\dot{\rho}$-invariant). By \ref{abc95}, 
there exists a chart $\dot{f}_1$ in $\dot{\mathcal A}_\rho$
such that $\dot{A}_3=\dot{f}_1(\dot{\mathbb A})$ and $x,x'\in\dot{f}_1({\mathbb A})$.
Thus $(X,{\mathcal A})$ satisfies (A3).

Let $S$ and $S'$ be two sectors of $X$. By \ref{abc177}, there exist
$\dot{\rho}$-invariant sectors $\dot{S}$ and $\dot{S}'$ of
$(\dot{X},\dot{\mathcal A})$ such that $S=\dot{S}\cap X$ and
$S'=\dot{S}'\cap X$. The chambers $\dot{S}^\infty$ and $(\dot{S}')^\infty$
of $(\dot{X},\dot{\mathcal A})^\infty$ are contained in a 
$\rho$-invariant apartment of $(\dot{X},\dot{\mathcal A})^\infty$. This apartment is of the 
form $\dot{A}_1^\infty$, where $\dot{A}_1$ is a $\dot{\rho}$-invariant
apartment of $(\dot{X},\dot{\mathcal A})$. The apartment
$\dot{A}_1$ contains
sectors $\dot{S}_1$ and $\dot{S}_1'$ such that $\dot{S}_1\subset\dot{S}$ and 
$\dot{S}_1'\subset\dot{S}'$. Let $\dot{S}_2=\dot{S}_1\cap\dot{\rho}(\dot{S}_1)$ and 
$\dot{S}_2'=\dot{S}_1'\cap\dot{\rho}(\dot{S}_1')$. The intersection of two
subsectors of a given sector is again a subsector. It follows that
$\dot{S}_2$ is a subsector of $\dot{S}$ and $\dot{S}_2'$ is a subsector of $\dot{S}'$. 
Since $\dot{\rho}^2=1$, both of these subsectors are fixed by $\dot{\rho}$. 
By \ref{abc95}, therefore,
$\dot{A}_1=\dot{f}_1(\dot{\mathbb A})$ for some 
$\dot{f}_1\in\dot{\mathcal A}_\rho$. Hence by \ref{abc177},
$\dot{S}_2\cap X$ and $\dot{S}_2'\cap X$ are subsectors of $S$ and $S'$ 
both are contained in $\dot{f}_1({\mathbb A})$. Thus $(X,{\mathcal A})$ satisfies (A4).

We turn now to (A5). A root of $(X,{\mathcal A})$ is the image
under a chart in $\dot{\mathcal A}_\rho$ of $H_\alpha$ for some $\alpha\in\Phi$,
where $H_\alpha$ is as in \ref{abc60}. Let $\dot{A}_1$ and 
$\dot{A}_2$ be the images of two charts in $\dot{\mathcal A}_\rho$ such that
$\dot{A}_1\cap\dot{A}_2\cap X$ contains a root $\beta$ of $(X,{\mathcal A})$
but $\dot{A}_1\cap X\ne\dot{A}_2\cap X$.
By (A2), the set $\dot{A}_1\cap\dot{A}_2\cap X$ is closed. We can therefore
choose a point $x$ in this set such that the apartments $\dot{\Sigma}_1:=g_x(\dot{A}_1)$
and $\dot{\Sigma}_2:=g_x(\dot{A}_2)$ of the residue of $(\dot{X},\dot{\mathcal A})_x$
are distinct. 
For $i=1$ and $2$, 
let $\dot{\Sigma}_i^{\dot{\rho}}$ be the set of chambers of 
$\dot{\Sigma}_i$ fixed by $\dot{\rho}$. Then
$\dot{\Sigma}_1^{\dot{\rho}}$ and
$\dot{\Sigma}_2^{\dot{\rho}}$ both span apartments of the building $\Xi$
of type $\Phi$ defined in \ref{abc96}. Let $S$
be a sector contained in the root $\beta$. Then the convex hull $S'$ 
of $\{x\}\cup S$ is a sector with vertex $x$ and
$\dot{\Sigma}_1^{\dot{\rho}}$ and
$\dot{\Sigma}_2^{\dot{\rho}}$ both contain the unique chamber of $\Xi$
that contains $g_x(S')$.
It follows that $\dot{\Sigma}_1^{\dot{\rho}}\cap\dot{\Sigma}_2^{\dot{\rho}}$
contains a root of both $\dot{\Sigma}_1^{\dot{\rho}}$
and $\dot{\Sigma}_2^{\dot{\rho}}$. On the other hand, 
$\dot{\Sigma}_1^{\dot{\rho}}\ne\dot{\Sigma}_2^{\dot{\rho}}$ since
otherwise $\dot{\Sigma}_1\cap\dot{\Sigma}_2$ would contain a pair
of opposite chambers. Two distinct apartments of a spherical building
whose intersection contains a root intersect in a root and their
symmetric difference spans a third apartment.
Thus the symmetric
difference of $\dot{\Sigma}_1^{\dot{\rho}}$ and $\dot{\Sigma}_2^{\dot{\rho}}$
spans a third apartment of $\Xi$. Let $\dot{u}_1$ and $\dot{u}_2$ be two
chambers in this apartment that are opposite in $\Xi$ and hence
also opposite in the residue $(X,{\mathcal A})_x$. There exist
unique sectors $\dot{S}_1$ and $\dot{S}_2$ of 
$\dot{A}_1$ and $\dot{A}_2$ with vertex $x$ 
whose germs are $\dot{u}_1$ and $\dot{u}_2$,
and by Proposition~1.12 in \cite{parreau} there exists an apartment
$\dot{A}_3$ containing these two sectors. 
In particular, $x\in\dot{A}_1\cap\dot{A}_2\cap\dot{A}_3\cap X$.
Now suppose that $\dot{A}'$ is the image of a chart $\dot{f}'$ in $\dot{\mathcal A}_\rho$ 
such that both $\dot{A}'\cap\dot{A}_1\cap X$ and $\dot{A}'\cap\dot{A}_2\cap X$ contain
roots of $(X,{\mathcal A})$ that are disjoint from $\dot{A}_1\cap\dot{A}_2\cap X$.
By \ref{abc177}, $\dot{f}'({\mathbb A})$ contains subsectors
of both $\dot{S}_1\cap X$ and $\dot{S}_2\cap X$ and hence
$\dot{A}'$ contains subsectors of both
$\dot{S}_1$ and $\dot{S}_2$. Thus $\dot{A}'=\dot{A}_3$
since $\dot{A}_3$ is the convex hull of 
any two sectors, one contained in $S_1$ and the other in $S_2$. 
Therefore $x\in\dot{A}_1\cap\dot{A}_2\cap\dot{A}'$. Thus $(X,{\mathcal A})$
satisfies (A5).

Let $\dot{d}$ be the metric on $\dot{X}$ that appears in (A6).
The restriction of $\dot{d}$ to $X$ is a metric on $X$, and 
each chart in ${\mathcal A}$ 
is the restriction to ${\mathbb A}$ of a chart in $\dot{\mathcal A}$.
Therefore $(X,{\mathcal A})$ satisfies (A6) with the restriction of $\dot{d}$
to $X$ in place of $d$. 
\end{proof}

\begin{proposition}\label{abc100}
The building $\Delta$ is the building at infinity of $(X,{\mathcal A})$ (in 
the sense of \ref{abc24} in cases ${\sf B}$ and ${\sf G}$). 
\end{proposition}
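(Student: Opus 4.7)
The plan is to exhibit a natural bijection $\iota\colon (X,\mathcal{A})^\infty \to \Delta$ at the level of faces, sending a parallel class $F^\infty$ of a face $F$ of $(X,\mathcal{A})$ to the parallel class $\dot{F}^\infty\in\dot{\Delta}$, where $\dot{F}$ is the (unique) $\dot{\rho}$-invariant face of $(\dot{X},\dot{\mathcal A})$ for which $\dot{F}\cap X=F$; such a $\dot{F}$ exists because by \ref{abc92a} every face $F$ of $(X,\mathcal{A})$ is of the form $\dot{f}(\dot{F}_0\cap{\mathbb A})$ for some $\dot{f}\in\dot{\mathcal A}_\rho$ and some $\tau$-invariant face $\dot{F}_0$ of $\dot\Phi$ (apply \ref{abc98}.iv), and then $\dot{F}=\dot{f}(\dot{F}_0)$ works; its uniqueness is immediate from \ref{abc95x}. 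Since $\dot{\rho}$ acts on $\dot{\Delta}$ as $\rho$ and fixes $\dot F$ setwise, the parallel class $\dot{F}^\infty$ is $\rho$-invariant, hence a face of $\Delta$ by \ref{abc29} and \ref{abc48}.

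First I would check well-definedness and injectivity of $\iota$ on parallel classes. If $F_1,F_2$ are faces of $(X,\mathcal{A})$, then by (A6) for both $(X,\mathcal{A})$ and $(\dot{X},\dot{\mathcal A})$, the Hausdorff distance between $F_i$ in $X$ agrees with that between the corresponding $\dot{F}_i$ in $\dot{X}$ (since the Euclidean metric on ${\mathbb A}$ is the restriction of that on $\dot{\mathbb A}$), so $F_1^\infty=F_2^\infty$ iff $\dot{F}_1^\infty=\dot{F}_2^\infty$. Next I would show apartments correspond: for each chart $\dot{f}\in\dot{\mathcal A}_\rho$ with $A=\dot{f}({\mathbb A})$ and $\dot{A}=\dot{f}(\dot{\mathbb A})$, the bijection $\dot{F}\mapsto\dot{F}\cap{\mathbb A}$ from $\tau$-invariant faces of $\dot\Phi$ to faces of $\Phi$ (\ref{abc98}.iv), followed by parallel-class formation, gives a bijection between $A^\infty$ and the set of $\rho$-fixed chambers of the $\rho$-compatible apartment $\dot{A}^\infty$, and by \ref{abc29}.iii the latter is precisely an apartment of $\Delta$.

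The surjectivity of $\iota$ on chambers is the most delicate step and I expect it to be the main obstacle. Given a chamber $c$ of $\Delta$, viewed as a $\rho$-fixed chamber of $\dot{\Delta}$, one has $c=\dot{S}^\infty$ for some sector $\dot{S}$ of $(\dot{X},\dot{\mathcal A})$. The subsector $\dot{S}\cap\dot{\rho}(\dot{S})$ is $\dot{\rho}$-invariant (using $\dot{\rho}^2=1$ and the fact, used already in the proof of (A4) in \ref{abc76}, that the intersection of two subsectors of a given sector is again a subsector) and still has germ at infinity $c$. Applying \ref{abc28} to pick a $\dot{\rho}$-invariant subsector contained in a chart of $\dot{\mathcal A}_\rho$ and then \ref{abc177} shows that $\dot{S}\cap X$ is a sector of $(X,\mathcal{A})$ with $(\dot{S}\cap X)^\infty=\iota^{-1}(c)$. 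A similar germ-and-subsector argument handles non-maximal faces and verifies the incidence relations, proving that $\iota$ is an isomorphism of simplicial complexes carrying the apartment system of $(X,\mathcal{A})^\infty$ onto that of $\Delta$.

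Finally, in the rank-one cases (${\sf B}$ and ${\sf G}$) I must verify the conditions of \ref{abc20} in the sense of \ref{abc24}, using the Moufang structure on $\Delta$ from \ref{abc49}. By \ref{abc49} each $U_\alpha$ is the centralizer of $\rho$ in $\langle U_{\dot\alpha}\mid \dot\alpha\in\dot\Phi_\alpha\rangle$, and each $U_{\dot\alpha}$ already lives in $\mathrm{Aut}(\dot{X},\dot{\mathcal A})$ by \ref{abc20}.i applied to $\dot{\Delta}$ (which has rank $\ge 2$). The centralizer of $\dot{\rho}$ in this product preserves $X=\dot X^{\dot\rho}$ and (via \ref{abc95x}) the chart family $\mathcal{A}$, yielding the injection into $\mathrm{Aut}(X,\mathcal{A})$ required by \ref{abc20}.i. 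For \ref{abc20}.ii, I would define $\varphi_\alpha(u)$ on $u\in U_\alpha^*$ by observing that the fixed point set of $u$ in $A$ is the intersection with ${\mathbb A}$ of the union of the $\dot{\rho}$-invariant half-spaces $\dot{f}(H_{\dot\alpha,\dot\varphi_{\dot\alpha}(u_{\dot\alpha})})$ arising from the root-group decomposition of $u$; using \ref{abc98}.iii these half-spaces intersect ${\mathbb A}$ in the half-space of ${\mathbb A}$ orthogonal to $\alpha$, giving the desired $H_{\alpha,\varphi_\alpha(u)}$ form.
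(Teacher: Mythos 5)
Your overall strategy matches the paper's quite closely: you build the bijection at infinity out of \ref{abc98}.iv and \ref{abc177}, handle surjectivity by replacing an arbitrary sector $\dot S$ with germ fixed by $\rho$ with the $\dot\rho$-invariant subsector $\dot S\cap\dot\rho(\dot S)$, and reduce the Moufang/Bruhat--Tits verification in the rank-one cases to \ref{abc20} applied to $(\dot X,\dot{\mathcal A})$ together with \ref{abc49}. The paper's own proof is terser — it works entirely with chambers/sectors rather than general faces, asserting that \ref{abc177} directly gives the bijection, and disposes of the rank-one condition in one sentence — while you unpack the face-level bijection and the Hausdorff-distance comparison explicitly, which is a reasonable expansion of the same route.

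Two small inaccuracies worth fixing. First, you write $X=\dot X^{\dot\rho}$; the paper (in \ref{abc92}) only asserts the inclusion $X\subseteq\dot X^{\dot\rho}$, and equality is neither claimed nor needed. Your argument that the root groups preserve $X$ still goes through, because an element of ${\rm Aut}(\dot X,\dot{\mathcal A})$ centralizing $\dot\rho$ sends $\dot{\mathcal A}_\rho$ to itself, hence stabilizes $X$ by \ref{abc92a}; you just should not appeal to a set equality the paper never proves. Second, in the final step you describe the fixed-point set of $u\in U_\alpha^*$ in $A$ as the intersection with ${\mathbb A}$ of the \emph{union} of half-spaces arising from the decomposition of $u$ into $U_{\dot\alpha}$-pieces; this should be an \emph{intersection}, and moreover the fact that this intersection really exhausts the fixed-point set (rather than being merely contained in it) requires an argument — the paper handles exactly this delicacy separately in the proof of \ref{abc101}, where it produces a point fixed by $u$ and $y$ individually but not by $uy$ to rule out extra fixed points. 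As written, your last paragraph asserts the half-space form a bit too quickly; it would be cleaner either to cite the mechanism of \ref{abc101} or, as the paper does, simply to invoke \ref{abc20} applied to $(\dot X,\dot{\mathcal A})$ and \ref{abc49} without rederiving the formula.
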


\begin{proof}
By \ref{abc177}, there is a canonical bijection $\pi$
from the chamber set of $(X,{\mathcal A})^\infty$
to the set of all chambers $\dot{S}^\infty$ of $(\dot{X},\dot{\mathcal A})^\infty=\dot{\Delta}$
such that $\dot{S}$ is a sector of $(\dot{X},\dot{\mathcal A})$
that is fixed by $\dot{\rho}$. If 
$\dot{S}_1$ is an arbitrary sector of $(\dot{X},\dot{\mathcal A})$
such that $\dot{S}_1^\infty$ is fixed by $\rho$, then 
$\dot{S}_2:=\dot{S}_1\cap\dot{S}_1^{\dot{\rho}}$ is a sector of 
$(\dot{X},\dot{\mathcal A})$ fixed by $\dot{\rho}$ such that
$\dot{S}_2^\infty=\dot{S}_1^\infty$. It follows that $\pi$
is, in fact, an isomorphism from $(X,{\mathcal A})^\infty$ to $\Delta$.
Furthermore, the conclusions
of \ref{abc20} hold for $(X,{\mathcal A})$ and $\Delta$, even
in cases ${\sf B}$ and ${\sf G}$, by 
\ref{abc20} applied to $(\dot{X},\dot{\mathcal A})$ and \ref{abc49}.
\end{proof}

By \ref{abc76} and \ref{abc100}, we conclude that \ref{abc93}.i holds. 
Now let $\alpha\in\Phi$ and $\varphi$ be as in \ref{abc93}.ii.

\begin{proposition}\label{abc101}
There exists a positive real number $k$ (which depends on the case) such that
$$\varphi_\alpha(x_\alpha(0,t))=k\cdot\nu(t)$$ 
for all $t\in K^*$ (or $t\in L^*$) in case ${\sf B}$ or ${\sf F}$ and 
$$\varphi_\alpha(x_\alpha(0,0,t))=k\cdot\nu(t)$$ 
for all $t\in K^*$ in case ${\sf G}$.
\end{proposition}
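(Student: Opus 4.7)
The plan is to express the element $u := x_\alpha(0,t)$ (cases ${\sf B}$ and ${\sf F}$) or $u := x_\alpha(0,0,t)$ (case ${\sf G}$) explicitly as a product of root group elements of the ambient building $\dot{\Delta}$ and then to identify $\mathrm{Fix}_A(u)$ with a specific half-space of $A$. In case ${\sf G}$, substituting $r = s = 0$ in formula \ref{abc55z} yields $u = x_3(t)\,x_4(t)$, with $\dot{\alpha}_3$ and $\dot{\alpha}_4 = \tau(\dot{\alpha}_3)$ forming a single $\tau$-orbit in $\dot{\Phi}_\alpha$; analogous commuting products $u = \prod_i x_{\dot{\beta}_i}(t)$, indexed by a union of $\tau$-orbits in $\dot{\Phi}_\alpha$ corresponding to the ``centre'' of $U_\alpha$, are available in cases ${\sf B}$ and ${\sf F}$ via \ref{abc107} and the description of the root groups of $\dot{\Delta}$ in \cite[33.17]{TW}. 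Write $\phi(t) := \varphi_\alpha(u)$.

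The crucial input is that, since $\nu$ is $\theta$-invariant, $\nu(t^\theta)/\sqrt{p} = \nu(t)$, and hence \ref{abc80} yields $\dot{\varphi}_{\dot{\beta}}(x_{\dot{\beta}}(t)) = \nu(t)$ for \emph{every} $\dot{\beta} \in \dot{\Phi}$. By \ref{abc20}.ii applied to $(\dot{X}, \dot{\mathcal A}, \dot{A}, x_A)$, each factor $x_{\dot{\beta}_i}(t)$ fixes pointwise the half-apartment $\dot{f}(H_{\dot{\beta}_i, \nu(t)})$ of $\dot A$, and so $u$ fixes their common intersection. On restriction to $\mathbb{A}$, the functionals $\dot{\beta}_i$ and $\tau(\dot{\beta}_i)$ coincide; combined with the identity $v \cdot \ddot{\beta}_i = 2\,(v \cdot \dot{\beta}_i)$ for $v \in \mathbb{A}$ and the normalization $\alpha = \ddot{\beta}_i/|\ddot{\beta}_i|$, this common intersection equals $f(H_{\alpha, k\nu(t)})$ with $k := 2/|\ddot{\beta}_i| > 0$. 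Applying \ref{abc20}.ii to the Bruhat-Tits space $(X, \mathcal A, A, x_A)$ provided by \ref{abc76} and \ref{abc100} therefore yields $\phi(t) \leq k\nu(t)$ for all $t \in K^*$.

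To promote this to an equality, the plan is to bootstrap via the multiplicative structure given by \ref{abc40}. The centre of $U_\alpha$ is $(K,+)$ by \ref{abc72}, so axiom (V1) of \ref{abc14} gives the ultrametric inequality $\phi(s+t) \geq \min(\phi(s), \phi(t))$. Granted that $\varphi_\alpha(w) = 0$, the conjugation formulas in \ref{abc55}.i--ii (via $N(0,0,s) = s^2$ in case ${\sf G}$ and $R(0,s)^\theta = s^{\theta^2} = s^2$ in cases ${\sf B}$ and ${\sf F}$) combined with \ref{abc40} produce the identity $\phi(s^2 t) = \phi(t) + 2\phi(s)$. Comparing $\phi((ab)^2) = 2\phi(ab)$ with $\phi(a^2 b^2) = 2\phi(a) + 2\phi(b)$ then forces $\phi(ab) = \phi(a) + \phi(b)$, so that $\phi$ is a valuation of $K$; applying $\phi \leq k\nu$ at both $t$ and $t^{-1}$, with $\phi(t^{-1}) = -\phi(t)$, finally forces $\phi = k\nu$ as required.

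The main obstacle is the verification that $\varphi_\alpha(w) = 0$. The forward inequality at $t = 1$ gives $\varphi_\alpha(w) \leq 0$ directly. For the reverse bound, the plan is to run the same forward argument inside the opposite root group $U_{-\alpha}$: expressing $w_- := x_{-\alpha}(0,0,1)$ (resp.~$x_{-\alpha}(0,1)$) as an analogous commuting product $\prod_i x_{-\dot{\beta}_i}(1)$ of root group elements of $\dot\Delta$, one obtains $\varphi_{-\alpha}(w_-) \leq 0$. Invoking axiom (V3) of \ref{abc14} with $\alpha = \beta$ and $u = g = w$ yields $\varphi_{-\alpha}(w^{m_\Sigma(w)}) = -\varphi_\alpha(w)$; a short computation using \ref{abc6}.iv (or its analogue at the centre level, read off from the Moufang structure of \ref{abc49}) shows that $w^{m_\Sigma(w)}$ equals $w_-$ up to sign, so $\varphi_{-\alpha}(w^{m_\Sigma(w)}) = \varphi_{-\alpha}(w_-) \leq 0$, whence $\varphi_\alpha(w) \geq 0$. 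Combined with $\varphi_\alpha(w) \leq 0$, this closes the argument.
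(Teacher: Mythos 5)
Your first step coincides with the paper's: decompose the central element as a commuting product of root group elements of $\dot{\Delta}$, observe that their common fixed set in $\dot{A}$ meets ${\mathbb A}$ in the half-space $f(H_{\alpha,k\nu(t)})$, and hence deduce the \emph{inequality} $\varphi_\alpha(u)\le k\nu(t)$. After that, the routes diverge. The paper proves the reverse inclusion ${\rm Fix}_A(u)\subset f(H_{\alpha,k\nu(t)})$ directly by a short CAT(0)/convexity argument: if $z=x_{\dot{\alpha}_3}(t)x_{\dot{\alpha}_4}(t)$ fixed a point $x\in A$ outside $f(H_{\alpha,k\nu(t)})$, then joining $x$ by a geodesic to a point $x'$ in the interior of $\dot{f}(H_{\dot{\alpha}_3,\nu(t)}\cap H_{\dot{\alpha}_4,\nu(t)})$ not lying in $\dot{f}({\mathbb A})$ would force $z$ to fix points where exactly one of the two commuting factors acts trivially, which is impossible for the product. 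This yields the equality at once, for all $t$, with no appeal to multiplicativity or to the normalization $\varphi_\alpha(w)=0$.

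Your bootstrapping route is a genuinely different completion, and it is where the gaps lie. The identity $\phi(s^2t)=\phi(t)+2\phi(s)$ obtained from \ref{abc40} and \ref{abc55} is conditional on $\varphi_\alpha(w)=0$; without it the correct identity carries an extra $-2\varphi_\alpha(w)$, and the only conclusion you can extract from $\phi\le k\nu$ together with (V3) is $\varphi_\alpha(w)\le 0$. To close the circle you assert $w^{m_\Sigma(w)}=\pm w_-$ ``by a short computation using \ref{abc6}.iv,'' but \ref{abc6}.iv only governs conjugation by $m_{\dot{\Sigma}}(x_{\dot{\alpha}}(1))$ for $\dot{\alpha}\in\dot{\Phi}$, whereas $m_\Sigma(w)$ is the $m$-element of a product $w=x_{\dot{\alpha}_3}(1)x_{\dot{\alpha}_4}(1)$; what one actually needs is that the resulting scalar $z$ in $x_{\dot{\alpha}_3}(1)^{m_\Sigma(w)}=x_{\dot{\alpha}_9}(z)$, and similarly for $\dot{\alpha}_4$, has $\nu(z)=0$, and this is not established by the references you cite. (One can extract it from the explicit $\omega$-formula \ref{abc55n}, but it is a real verification, not a restatement.) Separately, in case ${\sf B}$ with $L\ne K$ the multiplicativity step $\phi(ab)=\phi(a)+\phi(b)$ is only meaningful after you pass from $L^*$ to $K^*$ via $t\mapsto t^\theta$, exactly as in the proof of \ref{abc90}; your write-up treats the centre as $(K,+)$, which is false in that case. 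In short: the approach can probably be made to work, but it replaces a four-line geometric argument with two nontrivial lemmas, each of which needs a proof it does not currently have.
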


\begin{proof}
Suppose first that we are in case ${\sf G}$. We use the notation from the 
proof of \ref{abc55}. By \ref{abc55z}, in particular, we have
$$x_\alpha(0,0,t)=x_{\dot{\alpha}_4}(t)x_{\dot{\alpha}_5}(t)$$ 
for all $t\in K$. Since $\tau$ interchanges $\dot{\alpha}_4$ and $\dot{\alpha}_5$,
there exists a positive real number $k$ such that for all $t\in K^*$, the
affine half-spaces 
$H_{\dot{\alpha}_4,\nu(t)}$ and $H_{\dot{\alpha}_5,\nu(t)}$ 
(as defined in \ref{abc20}) both
intersect ${\mathbb A}$ (which is the space of fixed points of $\tau$)
in an affine half-space of the form $H_{\alpha,k\cdot\nu(t)}$. 
Choose $t\in K^*$, let $u=x_{\dot{\alpha}_4}(t)$,
let $y=x_{\dot{\alpha}_5}(t)$ and let $z=uy$. Then 
$$\dot{A}\cap\dot{A}^u=\dot{f}_{\dot{A},x_A}(H_{\dot{\alpha}_4,\nu(t)})$$
and 
$$\dot{A}\cap\dot{A}^y=\dot{f}_{\dot{A},x_A}(H_{\dot{\alpha}_5,\nu(t)})$$
by \ref{abc20}.ii and \ref{abc80}. Suppose that $z$ fixes a point $x$
in ${\mathbb A}$ which is not in 
$$\dot{f}_{\dot{A},x_A}(H_{\alpha,k\cdot\nu(t)}).$$
Choose $x'$ in 
$$\dot{f}_{\dot{A},x_A}\big(H_{\dot{\alpha}_4,\nu(t)}\cap H_{\dot{\alpha}_5,\nu(t)}\big)$$
but not in $\dot{f}_{\dot{A},x_A}({\mathbb A})$. Since $u$ and $y$ both fix $x'$, so does $z$.
Thus $z$ fixes every point in the interval $[x,x']$. This interval contains
points, however, which are in 
$\dot{f}_{\dot{A},x_A}(H_{\dot{\alpha}_4,\nu(t)})$ or 
$\dot{f}_{\dot{A},x_A}(H_{\dot{\alpha}_5,\nu(t)})$ but not in both.
These points are fixed by $u$ or $y$ but not both. Hence they cannot be 
fixed by $z$. We conclude that 
$$A\cap A^z=\dot{f}_{\dot{A},x_A}(H_{\alpha,k\cdot\nu(t)}).$$
By \ref{abc64}, $f_{A,x_A}$ is the restriction of $\dot{f}_{\dot{A},x_A}$
to ${\mathbb A}$. Thus $\varphi_\alpha(x_\alpha(0,0,t))=k\cdot\nu(t)$ for all $t\in K^*$.

The proof in cases ${\sf B}$ and ${\sf F}$ is virtually the same as the proof in case
${\sf G}$; we leave the details to the reader. 
\end{proof}

By \ref{abc90}, there exists
a $\theta$-invariant valuation $\nu_1$ such that $\varphi_\alpha$
satisfies \ref{abc90a} or \ref{abc90b} with $\nu_1$ in place of $\nu$.
By \ref{abc101}, it follows that $\nu_1$ is equivalent to $\nu$.
Thus \ref{abc93}.ii holds. This concludes the proof of \ref{abc93}.

\bigskip
\centerline{*\qquad\qquad*\qquad\qquad*}
\medskip
Here, finally, is a precise version of the remark Tits made in \cite{tits-como} 
that was discussed in the introduction.

\begin{theorem}\label{abc99}
Bruhat-Tits spaces whose building at infinity (in the sense of \ref{abc24} if
the rank of $\Delta$ is one) 
is a given Suzuki-Ree building $\Delta$ (in the sense of \ref{abc97})
defined over a triple $(K,\theta,L)$
(in case ${\sf B}$) or pair $(K,\theta)$ (in cases ${\sf F}$ or ${\sf G}$)
are classified by $\theta$-invariant valuations of $K$. 
Equivalent $\theta$-invariant valuations of $K$ correspond 
to equivalent Bruhat-Tits spaces (in the sense of \ref{abc67}).
\end{theorem}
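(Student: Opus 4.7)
The plan is to construct mutually inverse maps between the set of $\theta$-invariant valuations of $K$ and the set of isomorphism classes of Bruhat-Tits spaces whose building at infinity is $\Delta$, and then to read off the equivalence statement from the behavior of these maps under scaling.

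The forward direction is supplied by \ref{abc91} followed by \ref{abc31}: starting from a $\theta$-invariant valuation $\nu$, extend it to a valuation $\varphi$ of the root datum of $\Delta$ based at a fixed apartment $\Sigma$, and then apply \ref{abc31} to obtain a Bruhat-Tits space $(X_\nu, {\mathcal A}_\nu)$ with building at infinity $\Delta$, uniquely determined up to isomorphism. Conversely, given a Bruhat-Tits space $(X, {\mathcal A})$ whose building at infinity is $\Delta$, choose an apartment $A$ and a point $x \in A$, use \ref{abc20} together with \ref{abc23} to obtain a valuation $\psi$ of the root datum of $\Delta$ based at $\Sigma := A^\infty$, and then apply \ref{abc90} to extract the desired $\theta$-invariant valuation $\nu$. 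The main point to verify is that this second assignment is well defined on isomorphism classes: changing $x$ replaces $\psi$ by an equipollent valuation, which \ref{abc90} declares to yield the same $\nu$; and changing the apartment or applying a BT-space isomorphism amounts, by \ref{abc13}, to conjugating the entire setup by an element of $G^\dagger$, under which the recovered $\nu$ is invariant (the auxiliary data $\alpha$ and $x_\alpha$ are unique up to the action of $G_\Sigma$ by \ref{abc55}).

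To see that the two maps are inverse, note first that when $\varphi$ is obtained from $\nu$ via \ref{abc91}, the element $w$ of \ref{abc90} satisfies $\varphi_\alpha(w) = \nu(R(0,1)) = \nu(1) = 0$ in cases ${\sf B}$ and ${\sf F}$ (and similarly $\nu(N(0,0,1)) = 0$ in case ${\sf G}$), so the equipollence adjustment in \ref{abc90} is trivial and the $\theta$-invariant valuation recovered is precisely $\nu$. In the reverse direction, given $(X, {\mathcal A})$ with associated $\psi$ and extracted $\nu$, the valuation of the root datum built from $\nu$ via \ref{abc91} agrees with $\psi$ on the root group $U_\alpha$ by construction, hence by \ref{abc84} is equipollent to $\psi$, so that \ref{abc31} produces a Bruhat-Tits space isomorphic to $(X, {\mathcal A})$.

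Finally, if $\nu' = c\nu$ for a positive constant $c$, then the formula of \ref{abc91} gives $\varphi' = c\varphi$, and precomposing each chart of $(X_\nu, {\mathcal A}_\nu)$ with the dilation of ${\mathbb A}$ by $c^{-1}$ produces a Bruhat-Tits space on the same underlying set whose associated valuation is $\varphi'$; by the uniqueness in \ref{abc31}, this Bruhat-Tits space is isomorphic to $(X_{\nu'}, {\mathcal A}_{\nu'})$, witnessing that $(X_\nu, {\mathcal A}_\nu)$ and $(X_{\nu'}, {\mathcal A}_{\nu'})$ are equivalent in the sense of \ref{abc67}. The principal obstacle in the entire argument is the careful bookkeeping required for well-definedness of the map $(X, {\mathcal A}) \mapsto \nu$ under all of the available choices (apartment, base point, automorphism, and auxiliary data $\alpha$ and $x_\alpha$); once this is handled by combining \ref{abc13}, \ref{abc55}, and the equipollence invariance of \ref{abc90}, the remainder of the argument is a chain of direct citations.
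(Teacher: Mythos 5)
Your proof is correct and follows the same chain of lemmas as the paper's own proof (\ref{abc91}, \ref{abc31}, \ref{abc23}, \ref{abc90}, \ref{abc84}), constructing the bijection in both directions via the same route. You actually go somewhat further than the paper: the paper's proof does not spell out that the two maps are mutually inverse, nor does it address the final sentence of the theorem about equivalent valuations corresponding to equivalent (dilated) Bruhat-Tits spaces, whereas your observation that $\varphi_\alpha(w)=\nu(1)=0$ eliminates the equipollence adjustment and your dilation argument via \ref{abc67} and the uniqueness in \ref{abc31} supply these missing details cleanly.
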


\begin{proof}
Let $\Delta$ be a Suzuki-Ree building defined over the triple $(K,\theta,L)$
in case ${\sf B}$ or the pair $(K,\theta)$ in cases ${\sf F}$ or ${\sf G}$,
and let $\alpha$ and $x_\alpha$ be as in \ref{abc55} (with respect to
some apartment $\Sigma$ of $\Delta$).
Suppose that $\nu$ is a $\theta$-invariant valuation of $K$. By
\ref{abc91}, there exists a valuation $\varphi$ of the root datum of $\Delta$
based at $\Sigma$ satisfying \ref{abc90a} or \ref{abc90b}.
By \ref{abc84}, $\varphi$ is unique up to equipollence. By \ref{abc31}, $\varphi$
determines a unique Bruhat-Tits space $(X,{\mathcal A})$ having $\Delta$ as
its building at infinity. By \ref{abc23}, any valuation 
equipollent to $\varphi$ determines the same Bruhat-Tits space. 

Suppose, conversely, that $(X,{\mathcal A})$
is a Bruhat-Tits space whose building at infinity is $\Delta$.
By \ref{abc31}, $(X,{\mathcal A})$ determines a valuation 
of the root datum of $\Delta$ based at $\Sigma$ which is unique
up to equipollence. By \ref{abc90}, this equipollence class determines 
a unique $\theta$-invariant valuation $\nu$ of $K$ such that
\ref{abc90a} or \ref{abc90b} holds for some valuation in this 
equipollence class. 
\end{proof}

\section{Appendix}
The inequalities \ref{abc744} and \ref{abc744a} hold by \ref{abc91} and 
the condition (V1). 
In this section we give an elementary proof of these inequalities
which might be of independent interest.
In fact, we only give a proof of \ref{abc744a}; the interested reader
will have no trouble applying the same strategy to the inequality \ref{abc744}.
The proof we give is based on a suggestion of Theo Grundh\"ofer.

We suppose that we are in case ${\sf G}$ and that $\nu$
is a $\theta$-invariant valuation of $K$. As in \ref{abc72}, we have
\begin{equation}\label{abc744c}
N(r,s,t)=r^{\theta+1}s^\theta-rt^\theta-r^{\theta+3}s-r^2s^2+s^{\theta+1}+t^2-r^{2\theta+4}
\end{equation}
for all $(r,s,t)$ in the group $T$. 

\begin{lemma}\label{abc746}
Let $(r,s,t)\in T$ and suppose that the minimum of $\nu(r)$, $\nu(s)$ and $\nu(t)$
is $0$. Then $\nu\big(N(r,s,t)\big)=0$.
\end{lemma}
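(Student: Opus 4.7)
The plan is to reduce modulo the maximal ideal of the valuation ring and invoke the anisotropy of $N$ over the residue field. Because $\nu$ is $\theta$-invariant with $\nu(x^\theta)=\sqrt{3}\cdot\nu(x)$, the valuation ring $\mathcal{O}=\{x\in K:\nu(x)\ge0\}$ and its maximal ideal $\mathfrak{m}=\{x\in K:\nu(x)>0\}$ are both stable under $\theta$. Hence $\theta$ descends to an endomorphism $\bar\theta$ of the residue field $k:=\mathcal{O}/\mathfrak{m}$, and since $\theta^2$ is the Frobenius of $K$, the induced map $\bar\theta^2$ is the Frobenius of $k$. In particular $\bar\theta$ is itself a Tits endomorphism of $k$, so $k$ (together with $\bar\theta$) is admissible data for the construction of the group $T$ and its norm $N$.

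Next I would check that $\nu\big(N(r,s,t)\big)\ge0$ whenever $\nu(r),\nu(s),\nu(t)\ge0$. This is an immediate term-by-term calculation from the explicit formula
$$N(r,s,t)=r^{\theta+1}s^\theta-rt^\theta-r^{\theta+3}s-r^2s^2+s^{\theta+1}+t^2-r^{2\theta+4},$$
using $\nu(x^\theta)=\sqrt3\,\nu(x)$: every monomial has non-negative valuation. Therefore $N(r,s,t)\in\mathcal{O}$, and the image $\overline{N(r,s,t)}$ in $k$ makes sense. Moreover, the same formula computes $N(\bar r,\bar s,\bar t)$ with respect to $\bar\theta$, so
$$\overline{N(r,s,t)}=N(\bar r,\bar s,\bar t)\quad\text{in }k.$$

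Finally, the hypothesis $\min\{\nu(r),\nu(s),\nu(t)\}=0$ means that at least one of $\bar r,\bar s,\bar t$ is non-zero, i.e.\ $(\bar r,\bar s,\bar t)\ne0$ in the group $T(k)$ associated with $(k,\bar\theta)$. By the anisotropy of the norm $N$ recalled in \ref{abc750}, applied to the Tits endomorphism $\bar\theta$ of the field $k$, this forces $N(\bar r,\bar s,\bar t)\ne0$ in $k$. Hence $N(r,s,t)\notin\mathfrak{m}$, and together with $N(r,s,t)\in\mathcal{O}$ this yields $\nu\big(N(r,s,t)\big)=0$, as required.

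The main conceptual obstacle is the implicit appeal to \ref{abc750} in the residue field: one must know that the anisotropy proof of Tits for $N$ over $(K,\theta)$ goes through over an arbitrary field of characteristic $3$ equipped with a Tits endomorphism. This is in fact how the proof in \cite{tits-ree} is formulated, so nothing new is needed; once this is recognized, the rest of the argument is the standard reduction--to--residue--field technique for anisotropic forms over valued fields.
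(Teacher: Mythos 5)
Your proof is correct and is essentially the paper's proof, only spelled out in more detail: the paper likewise forms the residue field $\bar K$, notes that $\theta$ descends to a Tits endomorphism $\bar\theta$, defines $\bar N$ by the same formula over $(\bar K,\bar\theta)$, and concludes via the anisotropy statement in \ref{abc750}. The only difference is that you make explicit the intermediate checks (each monomial of $N$ has non-negative valuation, and reduction commutes with forming $N$) that the paper's one-line proof leaves to the reader.
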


\begin{proof}
The Tits endomorphism $\theta$ induces a Tits endomorphism of $\bar K$. We let
$\bar\theta$ denote this endomorphism and let $\bar N$
be the map obtained by applying the formula \ref{abc744c} to
the pair $(\bar K,\bar\theta)$ rather than $(K,\theta)$.  
By \ref{abc750}, $\bar N$ is anisotropic.
\end{proof}

\begin{lemma}\label{abc745}
Let $(r,s,t)\in T$, let $A=(2\sqrt{3}+4)\nu(r)$, let $B=(\sqrt{3}+1)\nu(s)$, let 
$C=2\nu(t)$ and let $M$ be the minimum of $A$, $B$ and $C$. Then 
$\nu\big(N(r,s,t)\big)=M$.
\end{lemma}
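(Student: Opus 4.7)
The plan is to establish the two inequalities $\nu(N(r,s,t))\geq M$ and $\nu(N(r,s,t))\leq M$ separately. For the lower bound I expand $N(r,s,t)$ into the seven monomials $T_1,\ldots,T_7$ of the formula \ref{abc744c} and check term-by-term that $\nu(T_i)\geq M$. Writing $a=\nu(r),\ b=\nu(s),\ c=\nu(t)$, each alleged inequality $\nu(T_i)<M$ would entail $\nu(T_i)<A$, $\nu(T_i)<B$, and $\nu(T_i)<C$ simultaneously; a short calculation shows that any two of these three already force a contradictory chain such as $a<(\sqrt{3}-1)b/2$ together with $b<(1+\sqrt{3})a$, which combine to $a<a$. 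The ultrametric inequality then gives $\nu(N(r,s,t))\geq M$.

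For the matching upper bound the key tool is the homogeneity identity
\[ N\!\left(rz,\,sz^{1+\theta},\,tz^{2+\theta}\right)=z^{2\theta+4}\,N(r,s,t)\qquad(z\in K^*), \]
which I verify by comparing exponents term by term in \ref{abc744c}. Both $\nu(N)$ and $M$ shift by the same amount $(2\sqrt{3}+4)\nu(z)$ under this scaling, so the strategy is to choose $z$ so that the rescaled triple has integral coordinates at least one of which has valuation zero, and then to invoke \ref{abc746}. When $M=A$ (and $r\neq0$; the case $r=0$ forces $(r,s,t)=0$, where the claim is vacuous), I take $z=r^{-1}$: the conditions $B\geq A$ and $C\geq A$ translate to $b-(1+\sqrt{3})a\geq0$ and $c-(2+\sqrt{3})a\geq0$, so the rescaled triple $(1,\,s/r^{1+\theta},\,t/r^{2+\theta})$ is integral with unit first coordinate, and \ref{abc746} yields $\nu(N)=A$. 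When $M=C$, I pick $z\in K^*$ with $\nu(z)=(\sqrt{3}-2)c$, which exists because $c\in\nu(K^*)$ and $\nu(K^*)$ is closed under multiplication by $\mathbb{Z}[\sqrt{3}]$ (thanks to $\theta$-invariance); the parallel check makes the third rescaled coordinate a unit, and \ref{abc746} again applies.

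The hard part, and the reason \ref{abc746} cannot simply be invoked everywhere, is the case $M=B$: the analogous scaling would require $\nu(z)=-(\sqrt{3}-1)b/2$, and the factor $1/2$ may push this value out of $\nu(K^*)$. I circumvent this by observing that when $M=B$ holds \emph{strictly} (i.e.\ $B<A$ and $B<C$), the lower-bound estimates of the first paragraph are in fact strict for every $i\neq 5$, so $T_5=s^{\theta+1}$ is the unique term of valuation $B$ and $\nu(N)=B=M$ follows at once from the ultrametric inequality. Every remaining configuration with $M=B$ involves a tie $M=A$ or $M=C$, and so is already covered by the scaling arguments above.
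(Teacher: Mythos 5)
Your proof is correct and uses the same central idea as the paper: reduce to a ``primitive'' representative by the quasi-homogeneity of $N$ under the scaling $(r,s,t)\mapsto(rz,sz^{1+\theta},tz^{2+\theta})$, apply the anisotropy of the residue form via \ref{abc746} to pin the valuation down in the cases $M=A$ and $M=C$, and in the genuinely new case $M=B<A,C$ observe that the monomial $s^{\theta+1}$ strictly dominates. The paper's proof does exactly this, with the concrete scalings $z=r^{-1}$ (for $M=A$) and $z=t^{\theta-2}$ (for $M=C$); your treatment of $M=C$ picks an abstract $z$ of the required valuation, but the explicit choice $z=t^{\theta-2}$ (which satisfies $\nu(z)=(\sqrt 3-2)\nu(t)$ precisely because $\nu$ is $\theta$-invariant) is cleaner and avoids the ``such a $z$ exists'' digression. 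Your leading ``lower bound'' paragraph is redundant: each branch of your case analysis already produces the equality $\nu(N)=M$ rather than a one-sided bound, so the only piece of that calculation you actually use is the strict dominance of $s^{\theta+1}$ when $B<A$ and $B<C$, which could simply be stated (and checked) in that branch. Streamlining in this way would reproduce the paper's proof essentially verbatim.
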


\begin{proof}
We can assume that $(r,s,t)\ne(0,0,0)$. Suppose first that $M=A$. Then 
$$N(1,s/r^{\theta+1},t/r^{\theta+2})=N(r,s,t)/r^{2\theta+4}$$
by \ref{abc744c}. Moreover, $\nu(s/r^{\theta+1})$ and $\nu(t/r^{\theta+2})$ are both
non-negative since $B\ge A$ and $C\ge A$. Hence 
$$\nu\big(N(1,s/r^{\theta+1},t/r^{\theta+2})\big)=0$$
by \ref{abc746}. It then follows that
$\nu\big(N(r,s,t)\big)=\nu(r^{2\theta+4})=A=M$. 

Suppose next that $M=C$. In this case, we observe that
$$N(r/t^{2-\theta},s/t^{\theta-1},1)=N(r,s,t)/t^2.$$
Moreover, $\nu(r/t^{2-\theta})$ and $\nu(s/t^{\theta-1})$ are both
non-negative. Hence 
$$\nu\big(N(r/t^{2-\theta},s/t^{\theta-1},1)\big)=0$$
by \ref{abc746}. It then follows that $\nu\big(N(r,s,t)\big)=\nu(t^2)=C=M$.

It suffices now to assume that $M=B$ and that $B$ is strictly
less than both $A$ and $C$. In this case, $\nu(s^{\theta+1})$
is strictly less than the value under
$\nu$ of each of the remaining six terms on the right hand side of \ref{abc744c}.
Therefore $\nu\big(N(r,s,t)\big)=B=M$.
\end{proof}

Now let $(r,s,t),(w,u,v)\in T$ and let $M$ be the smaller of the two 
constants obtained by applying \ref{abc745} first to $(r,s,t)$
and then to $(w,u,v)$. Thus 
\begin{equation}\label{abc778a}
\nu(r),\nu(w)\ge M/(2\sqrt{3}+4)
\end{equation}
as well as 
\begin{equation}\label{abc778b}
\nu(s),\nu(u)\ge M/(\sqrt{3}+1)
\end{equation}
and 
\begin{equation}\label{abc778c}
\nu(t),\nu(v)\ge M/2.
\end{equation}
As in \ref{abc72}, we have
$$(r,s,t)\cdot(w,u,v)=(r+w,s+u+r^\theta w,t+v-ru+sw-r^{\theta+1}w).$$
Let $a=(2\sqrt{3}+4)\nu(r+w)$, let $b=(\sqrt{3}+1)\nu(s+u+r^\theta w)$
and let 
$$c=2\nu(t+v-ru+sw-r^{\theta+1}w).$$ 
By \ref{abc778a}--\ref{abc778c}, we have $(2\sqrt{3}+4)\nu(x)\ge M$ for $x=r$ and $x=w$;
$(\sqrt{3}+1)\nu(x)\ge M$ for $x=s$, $x=u$ and $x=r^\theta w$; and 
$2\nu(x)\ge M$ for $x$ equal to each of the five terms in the sum
$$t+v-ru+sw-r^{\theta+1}w.$$ Hence $a,b,c\ge M$. By \ref{abc745}, therefore,
$$\nu\big((r,s,t)\cdot(w,u,v)\big)\ge M
={\rm min}\big\{\nu\big(N(r,s,t)\big),\nu\big(N(w,u,v)\big)\big\}.$$

\bigskip\bigskip

\end{document}